\theoremstyle{plain}
\theoremstyle{plain}
\theoremstyle{definition}
\newtheorem{lm}{Lemma}[section]
\newtheorem{prop}[lm]{Proposition}
\newtheorem{cor}[lm]{Corollary}
\newtheorem{theorem}[lm]{Theorem}
\newtheorem{defn}[lm]{Definition}
\newtheorem{ex}[lm]{Example}
\theoremstyle{remark}
\newtheorem{rmk}[lm]{Remark}
\theoremstyle{remark}
\newtheorem{remark}[lm]{Remark}
\newcommand{\Hom}{\operatorname{Hom}\nolimits}
\newcommand{\add}{\operatorname{add}\nolimits}
\newcommand{\ZZ}{\mathbb{Z}}
\newcommand{\T}{\mathfrak{t}}
\newcommand{\A}{\mathcal{A}}
\newcommand{\Gr}{\operatorname{Gr}\nolimits}
\DeclareMathOperator{\GrV}{Gr(\mathit{n},\mathit{V})}
\DeclareMathOperator{\Grd}{Gr(\mathit{V}^{\vee},\mathit{n})}
\DeclareMathOperator{\Alg}{Alg}
\DeclareMathOperator{\Set}{Set}
\DeclareMathOperator{\Spec}{Spec}
\DeclareMathOperator{\Pd}{\mathbb{P}(\mathit{V}^{\vee})}
\DeclareMathOperator{\Pdd}{\mathbb{P}(\bigwedge^{\mathit{n}}\mathit{V}^{\vee})}
\DeclareMathOperator{\Sym}{Sym}
\title{Cluster algebras of infinite rank}
\author{Jan E. Grabowski\footnotemark[2] 
\\ \small{\textit{Department of Mathematics and Statistics, Lancaster University,}}
\\ \small{\textit{Lancaster, LA1 4YF, United Kingdom}}
\and Sira Gratz\footnotemark[3]
\\ \small{\textit{Institut f\"{u}r Algebra, Zahlentheorie und Diskrete Mathematik, Leibniz Universit\"{a}t Hannover,}}
\\ \small{\textit{Welfengarten 1, 30167 Hannover, Germany}}
\\
\\ With an appendix by Michael Groechenig\footnotemark[4]
\\ \small{\textit{Mathematical Institute, University of Oxford,}}
\\ \small{\textit{24--29 St. Giles', Oxford, OX1 3PG, United Kingdom}}
\\ \small{and}
\\ \small{\textit{Section de math\'{e}matiques, \'{E}cole Polytechnique F\'{e}d\'{e}rale de Lausanne,}}
\\ \small{\textit{CH-1015 Lausanne, Switzerland}}
}
\date{14th December 2012}
\begin{document}

\maketitle

\renewcommand{\thefootnote}{\fnsymbol{footnote}}
\footnotetext[2]{Email: \url{j.grabowski@lancaster.ac.uk}.  Website: \url{http://www.maths.lancs.ac.uk/~grabowsj/}}
\footnotetext[3]{Email: \url{gratz@math.uni-hannover.de}.}
\footnotetext[4]{Email: \url{groechenig@maths.ox.ac.uk}.}
\renewcommand{\thefootnote}{\arabic{footnote}}
\setcounter{footnote}{0}

\begin{abstract} Holm and J\o rgensen have shown the existence of a cluster structure on a certain category $D$ that shares many properties with finite type $A$ cluster categories and that can be fruitfully considered as an infinite analogue of these.  In this work we determine fully the combinatorics of this cluster structure and show that these are the cluster combinatorics of cluster algebras of infinite rank.  That is, the clusters of these algebras contain infinitely many variables, although one is only permitted to make finite sequences of mutations.

The cluster combinatorics of the category $D$ are described by triangulations of an $\infty$-gon and we see that these have a natural correspondence with the behaviour of Pl\"{u}cker coordinates in the coordinate ring of a doubly-infinite Grassmannian, and hence the latter is where a concrete realization of these cluster algebra structures may be found.  We also give the quantum analogue of these results, generalising work of the first author and Launois.

An appendix by Michael Groechenig provides a construction of the coordinate ring of interest here, generalizing the well-known scheme-theoretic constructions for Grassmannians of finite-dimensional vector spaces.
\end{abstract}


\vfill

\pagebreak

\section{Introduction}

Cluster algebras were introduced by Fomin and Zelevinsky in \cite{FZ1} to provide an algebraic framework for the study of dual canonical bases in coordinate rings of certain algebraic varieties related to semisimple groups, e.g.\ the Grassmann varieties.  The original definition of cluster algebras covers what we will term cluster algebras of finite rank, i.e.\ with clusters consisting of finitely many elements.  Our work here is concerned with an extension of this to cluster algebras of infinite rank, by allowing countable clusters but only finite sequences of mutations. 

Holm and J\o rgensen (\cite{HJ}) have identified a category $D$ that has $\mathbb{Z}A_{+\infty}$ as its Auslander--Reiten quiver and which possesses a cluster structure in the sense of Buan, Iyama, Reiten and Scott (\cite{BIRS}). Furthermore, the cluster tilting subcategories of $D$ have a natural correspondence with certain triangulations of the $\infty$-gon.  The combinatorics of triangulations of the $n$-gon are well-known to model cluster algebras of type $A$ and also the behaviour of Pl\"{u}cker coordinates for Grassmannians of planes and so it is natural to try to associate a cluster algebra to $D$ and to try to relate this to a suitable Grassmannian.  This is indeed what we do.  

Our main goal here is to decategorify the cluster structure on $D$, first by examining the cluster combinatorics associated to triangulations of the $\infty$-gon and then making a link with a suitable algebra having these combinatorics as a cluster structure.  These cluster combinatorics will be of infinite rank and the algebra we consider will be the coordinate ring of an infinite Grassmannian.  Although the theory progresses in a similar fashion to the finite rank case, the infinite setting produces some new and interesting features.

To carry out the programme described above, we adapt the well-known algorithm from the finite rank, type $A_{n}$ situation that associates an exchange quiver with a triangulation of the $\infty$-gon and we show that the resulting quiver coincides with the Gabriel quiver of the corresponding cluster tilting subcategory.  However, unlike the finite rank case, these exchange quivers are not all mutation-equivalent.  This is due to the fact that we are only allowed finite sequences of mutations; Example~\ref{ex:fin-many-mutations} shows that allowing infinite sequences of mutations takes one outside the class of triangulations of the $\infty$-gon, or equivalently away from cluster tilting subcategories.

In fact, there are uncountably many mutation-equivalence classes of exchange quivers of triangulations of the $\infty$-gon and each equivalence class gives rise to a cluster structure on a suitable algebra. This is in stark contrast to the finite type case, where there is only one mutation-equivalence class.  These algebras are certain subalgebras of the coordinate ring $\mathbb{C}[\Gr(2,\pm \infty)]$ of an infinite Grassmannian $\Gr(2,\pm\infty)$ which may be defined in an analogous way to the scheme-theoretic approach to the usual Grassmannians; the technical details of this construction are given in an appendix to this work, written by Michael Groechenig. 

The triangulations that are locally finite give cluster algebra structures on the whole coordinate ring $\mathbb{C}[\Gr(2,\pm \infty)]$ but other triangulations yield cluster structures only on proper subalgebras of this.  As a consequence, we have an infinite rank version of the construction going back to \cite{FZ2} of a cluster algebra structure of type $A_{n}$ on the coordinate ring $\mathbb{C}[\Gr(2,n+3)]$.

The main technical result that we use is a classification of the exchange quivers arising from triangulations, or equivalently of the Gabriel quivers of the (weak) cluster tilting subcategories of $D$.  A precise statement is given in Theorem~\ref{classif-of-exch-quivers} but informally these consist of quivers of type $A_{+\infty}$ or two copies of this, with quivers mutation-equivalent to finite type $A$ quivers attached via oriented 3-cycles, plus possibly a third connected component that is finite and mutation-equivalent to a finite type $A$ quiver.  Consequently we have many different cluster structures of infinite rank---of which some are of type $A_{+\infty}$ but some are not---though all have type $A$ flavour combinatorics.

\vfill
\pagebreak

Independent work on cluster algebras of type $A_{+\infty}$ has been done by Gorsky in \cite{Gorsky}, where he defined a cluster algebra of infinite rank for this type.  Then, as noted above, some of our cluster algebras have this form but others do not.  Our viewpoint is also somewhat different: we study all cluster algebras of infinite rank obtained from the cluster structure of the category $D$.

In \cite{BZ}, Berenstein and Zelevinsky have introduced the notion of a quantum cluster algebra, these being $q$-deformations of classical cluster algebras.  The first author and Launois have shown in \cite{GL} that the quantized coordinate ring over the Grassmannian $\mathbb{C}_q[\Gr(2,n+3)]$ has a quantum cluster structure of type $A_n$.  We extend this result by showing that the triangulations of the $\infty$-gon yield quantum cluster algebra structures on subalgebras of $\mathbb{C}_q[\Gr(2,\pm \infty)]$ analogous to the classical ones described above.

\subsection*{Acknowledgements}

We are indebted to Michael Groechenig for writing the aforementioned appendix to this work. The authors would also like to thank Karin Baur, Thorsten Holm, Peter J\o rgensen, Robert Marsh and Geordie Williamson for helpful conversations.  We would also like to gratefully acknowledge funding from a Lancaster University Research Incentivisation Grant and from the Deutsche Forschungsgemeinschaft. 

\section{Preliminaries} \label{preliminaries}

\subsection{Cluster algebras}

A cluster algebra (of geometric type) is a subring of a field of rational functions and is generated by the so-called cluster variables. The cluster variables are constructed by successive mutations of seeds, where a seed consists of an $n$-set of cluster variables and certain combinatorial information encoded in a quiver. A thorough introduction to cluster algebras can be found in either \cite{Ke10} or \cite{GSV} and we will assume that the reader is familiar with the concept of quiver mutation therein.  For notational purposes, we recall the definition of the cluster algebra associated to a finite (ice) quiver.

\begin{defn}\label{cluster algebra}
Let $Q$ be a finite ice quiver---that is, a quiver with distinguished vertices called frozen---without loops or 2-cycles. Let its set of vertices be $Q_0 = \{1, \ldots, n, n+1, \ldots, n+m\}$ where the vertices $n+1, \ldots, n+m$ are frozen. The full subquiver $Q'$ of $Q$ with vertices $1, \ldots, n$ is called the principal part of $Q$.  The cluster algebra $\A_Q$ is a subalgebra of the field of rational functions $\mathbb{Q}(x_1, \ldots, x_n, x_{n+1}, \ldots, x_{n+m})$ which is defined as follows.

Consider the pair $\Sigma = (Q,\{x_1, \ldots x_n, x_{n+1}, \ldots, x_{n+m}\})$ consisting of the quiver $Q$ and the $(n+m)$-set of indeterminates. We call it the {\em initial seed} of $\A_Q$. From the initial seed, for each of the vertices $1 \leq k \leq n$ in the principal part of the quiver $Q$ we pass to another seed  \[ \mu_k(Q, \{x_1, \ldots, x_n,  x_{n+1}, \ldots, x_{n+m}\}) = (\mu_k(Q), \{x_1 \ldots, x_{k-1}, x'_k, x_{k+1} \ldots, x_{n+m}\}) \] by simultaneously mutating $Q$ at the vertex $k$ and exchanging the variable $x_k$ for $x'_k$ which is defined by the {\em exchange relation}
\begin{eqnarray} \label{exchange relation}
x'_kx_k = \prod_{i \in Q_0}x_i^{\#\{i \to k\}} + \prod_{j \in Q_0} x_j^{\#\{k \to j\}}.
\end{eqnarray}
The empty product is defined to be $1$. 

\vfill
\pagebreak
By iterated mutations at all vertices, we get a collection of seeds. The $(n+m)$-sets in the seeds are called the {\em clusters} and the quivers {\em exchange quivers}. The variables in the union of all clusters are called the {\em cluster variables}, with the cluster variables $x_{n+1}, \ldots, x_{n+m}$ called coefficients. Then the {\em cluster algebra} $\A_Q$ is the subalgebra $\A_Q \subset \mathbb{Q}(x_1, \ldots, x_{n+m})$ generated by the cluster variables.
\end{defn}

\subsection{Quantum cluster algebras and quantum Grassmannians}\label{ss:QCAs}

In \cite{BZ}, Berenstein and Zelevinsky introduced quantum cluster algebras  as a generalization of cluster algebras in the noncommutative setting.  Quantum cluster algebras are noncommutative deformations of classical cluster algebras, obtained by introducing a formal variable $q$ and asking that each quantum cluster is a family of quasi-commuting variables (that is, the variables commute up to powers of $q$). The construction follows an analogous pattern to the classical case: we begin with an initial seed and we obtain a family of seeds by iterated mutation.  Then the variables in all the seeds together generate the quantum cluster algebra. The role of the ambient field $\mathbb{Q}(x_1, \ldots x_n)$ in the classical case is now taken by quantum tori, which are defined as quadratic algebras of the form 
\[ \mathbb{Q}(q^{\frac{1}{2}})[x_1^{\pm 1}, \ldots, x_n^{\pm 1}]/<x_ix_j - q^{L_{ij}}x_jx_i>, \]
where $L$ is an $n\times n$ skew-symmetric integer matrix defining the quasi-commutation rules for the variables $x_i$. We refer to \cite{BZ} for technical details.

Seeds now also include the data of a matrix $L$ as above, describing quasi-commutation relations in the cluster, in addition to the cluster itself and the exchange matrix $B$ that we already have in the classical case.  These quasi-commutation matrices also have a mutation rule and in order for the variables in a mutated cluster to still be quasi-commuting, the matrix $L$ has to satisfy a compatibility condition with the exchange matrix $B$, namely that $(B^TL)_{ij} = d \delta_{ij}$ for some positive integer $d$. 

The quantum exchange relation is similar to the classical one but involves coefficients that are powers of $q$ derived from $B$ and $L$, which we describe now.  Letting $(x_{1},\dotsc ,x_{n})$ be a quantum cluster with associated exchange matrix $B$ and quasi-commutation matrix $L$, the mutation at $x_{k}$ is given by setting $x'_i = x_i$ for $i\neq k$ and defining $x'_k$ by the exchange relation
$$
x'_k = M(-e_k + \sum_{b_{ik}>0}b_{ik}e_i) + M(-e_k - \sum_{b_{ik}<0}b_{ik}e_i),
$$
where
$$
M(a_1, \ldots, a_n) = q^{\frac{1}{2}\sum_{i<j}a_ia_jL_{ji}}x_1^{a_1}\dotsm x_n^{a_n}.
$$
Here $e_{i}$ is the $i$th standard basis vector in $\mathbb{C}^{n}$.  The monomial $M$ (as we have defined it here) is related to the concept of a toric frame, also introduced in \cite{BZ}.  The latter is a technical device used to make the general definition of a quantum cluster algebra.  For our purposes, it will suffice to think of $M$ simply as a rule determining the exchange monomials.

Next we turn to the definition of the quantized versions of matrix algebras and Grassmannians.  Let $q \in \mathbb{C}$ be non-zero and let $\mathbb{C}_q[M(2,n+3)]$ be the quantum matrix algebra, i.e.\ the $\mathbb{C}$-algebra generated by the set
$$
\{X_{ij} \mid 1 \leq i \leq 2,\;1 \leq j \leq n+3\},
$$
where the variables $X_{ij}$ are subject to the quantum matrix relations
\begin{align*}
X_{ki}X_{kj} & = qX_{kj}X_{ki} & X_{1i}X_{2i} & = qX_{2i}X_{1i} \\ X_{1j}X_{2i} & = X_{2i}X_{1j} &  X_{1i}X_{2j} - X_{2j}X_{1i} & = (q-q^{-1})X_{1j}X_{2i}
\end{align*}
for all $k = 1,2$ and $1 \leq i, j \leq n+3$ with $i < j$.

\begin{defn}
The {\em quantum coordinate ring $\mathbb{C}_q[\Gr(2,n+3)]$ of the Grassmannian} $\Gr(2,n+3)$ (or quantum Grassmannian for short) is the subalgebra of $\mathbb{C}_q[M(2,n+3)]$ generated by the quantum Pl\"ucker coordinates
$$
\Delta_q^{ij} =  X_{1i}X_{2j} - qX_{1j}X_{2i}
$$
for all $1 \leq i < j \leq n+3$.
\end{defn}

The quantum Pl\"ucker coordinates $\Delta_q^{ij}$ are subject to the {\em short quantum Pl\"ucker relations}
$$
\Delta_q^{ij}\Delta_q^{kl} = q^{-1}\Delta_q^{ik}\Delta_q^{jl} + q\Delta_q^{il}\Delta_q^{jk},
$$
see for example \cite{KLR}.

In \cite{GL}, the first author and Launois showed that $\mathbb{C}_q[\Gr(2,n+3)]$ has a quantum cluster algebra structure of type $A_n$.  There is a well-known correspondence between Pl\"ucker coordinates of the Grassmannian of planes in $n$-space and  diagonals (including the sides) of the $(n+3)$-gon. Furthermore, by Theorem 1.1 of \cite{LZ}, two Pl\"ucker coordinates $\Delta_q^{ij}$ and $\Delta_q^{kl}$ quasi-commute if and only if the corresponding arcs $(i,j)$ and $(k,l)$ in the $(n+3)$-gon do not cross.  So any triangulation of the $(n+3)$-gon together with its sides corresponds to a maximal set of quasi-commuting variables. In fact, it follows from \cite{GL} that such a triangulation gives rise to a cluster (and vice versa) just as in the classical case.

\subsection{Cluster categories and cluster structures}

A major step towards a better understanding of cluster algebras has been obtained by their categorification as first studied in \cite{BMRRT}. The basic idea is that certain subcategories in a triangulated Hom-finite Krull--Schmidt category take the role of clusters, with their indecomposable objects representing the cluster variables.  (See for example \cite{HJR} for an introduction to triangulated categories and their properties.)

\begin{defn}
Let $\mathcal{C}$ be a triangulated, Hom-finite Krull-Schmidt category with shift functor $\Sigma$. A subcategory $\mathcal{B}$ of $\mathcal{C}$ is called {\em weak cluster tilting} if $\mathcal{B} = (\Sigma^{-1}\mathcal{B})^{\perp} ={}^{\perp}(\Sigma \mathcal{B})$. A weak cluster tilting subcategory is called {\em cluster tilting} if in addition it is functorially finite. The collection of (isomorphism classes of) indecomposable objects in a cluster tilting subcategory is called a {\em cluster}.
\end{defn}

\begin{defn}[{\cite[Definition~5.1]{HJ}}]\label{cluster structure}
Let $\mathcal{C}$ be a triangulated Hom-finite Krull--Schmidt category with shift functor $\Sigma$ and let $\mathcal{T}$ be the collection of clusters in $\mathcal{C}$; that is, $\mathcal{T}$ consists of all indecomposable objects in $\mathcal{C}$ that lie in some cluster tilting subcategory of $\mathcal{C}$.  Then $\mathcal{T}$ imposes a {\em weak cluster structure} on $\mathcal{C}$ if the following hold:
\begin{itemize}
\item{for every cluster $C$ and every indecomposable object $c$ in $C$, there exists a unique indecomposable object $c^{*}$ of $\mathcal{C}$ such that $(C\setminus \{ c\}) \cup \{ c^{*} \}$ is again a cluster; and}
\item{there are distinguished triangles $c^{*} \to b \to c$ and $c \to b^{\prime} \to c^{*}$ in $\mathcal{C}$ such that the left-hand morphisms are $\add(C\setminus \{ c\})$-envelopes and the right-hand morphisms are $\add(C\setminus \{ c\})$-covers.}
\end{itemize}
A weak cluster structure is called a {\em cluster structure} if in addition the following hold:
\begin{itemize}
\item{if $C$ is a cluster, the quiver of $\add(C)$ has no loops or $2$-cycles; and}
\item{the quivers of $\add(C)$ and $\add((C\setminus \{ c\})\cup c^{*})$ are related by Fomin--Zelevinsky quiver mutation at $c$.}
\end{itemize}
\end{defn}

We note that this version of the definition of a (weak) cluster structure is better adapted to the situation where cluster tilting subcategories of $\mathcal{C}$ contain infinitely many isomorphism classes of indecomposable objects, as compared with the original definition of \cite{BIRS}.

Let $k$ be a field and view the polynomial algebra $k[X]$ in one variable over $k$ as a differential graded algebra with $X$ placed in homological degree $1$ and zero differentials. 

\begin{defn}\label{category-D}
Let $D$ be the derived category $D = D^f(k[X])$ of differential graded $k[X]$-modules with finite dimensional homology over $k$.
\end{defn}
This category has Auslander--Reiten quiver $\ZZ \vec{A}_{+\infty}$ where $\vec{A}_{+\infty}$ is the linearly-oriented quiver 
\begin{displaymath}
\xymatrix{\bullet \ar@{->}[r] & \bullet \ar@{->}[r] & \bullet \ar@{->}[r] & \bullet \ar@{->}[r] & \bullet \ar@{->}[r] & \ldots}
\end{displaymath}
with set of vertices $\{r \in \mathbb{Z}_{\geq 0}\}$ and arrows from $r$ to $r+1$ for all $r \geq 0$.  (We use the notation $\vec{\Gamma}$ to indicate that we are considering a quiver with underlying graph $\Gamma$ and say that $\vec{\Gamma}$ has type $\Gamma$.  We also note that we are using $A_{+\infty}$ as distinguished from the doubly-infinite graph $A_{\pm \infty}$ (sometimes denoted $A_{\infty}$ or $A^{\infty}_{\infty}$) having vertex set $\ZZ$ and edges between $r$ and $r+1$ for all $r$.)

Then the main result of \cite{HJ} is the existence of a cluster structure on $D$.

\begin{theorem}[{\cite[Theorem C]{HJ}}]
The clusters form a cluster structure on $D$. \qed
\end{theorem}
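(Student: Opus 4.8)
The statement is quoted from \cite{HJ}, so the plan is to reconstruct the argument that verifies each clause of Definition~\ref{cluster structure} for the collection $\mathcal{T}$ of clusters in $D$. The essential input is an explicit description of $D$: its indecomposable objects are the vertices of $\ZZ\vec{A}_{+\infty}$, which one identifies with the arcs of the $\infty$-gon, and one computes the dimensions of the spaces $\Hom$ and $\Ext^{1}$ between them directly from the differential graded structure on $k[X]$. With these in hand, the key structural fact to establish first is that a subcategory $\mathcal{B}$ of $D$ is (weak) cluster tilting precisely when the corresponding set of arcs is a triangulation of the $\infty$-gon, i.e.\ a maximal collection of pairwise non-crossing arcs. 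This translates the orthogonality condition $\mathcal{B}=(\Sigma^{-1}\mathcal{B})^{\perp}={}^{\perp}(\Sigma\mathcal{B})$ into the combinatorial ``non-crossing plus maximal'' condition via the $\Ext^{1}$ computation.

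Granting this dictionary, I would verify the first bullet of the weak cluster structure by a purely combinatorial argument on triangulations: if $c$ is an indecomposable in a cluster $C$, the corresponding internal arc $a$ is the common diagonal of a unique quadrilateral cut out by the triangulation $T_{C}$, and the flip of $a$ to the opposite diagonal $a^{*}$ yields the unique triangulation $T'$ with $T'\setminus\{a^{*}\}=T_{C}\setminus\{a\}$. Reading $a^{*}$ back as an indecomposable $c^{*}$ gives existence and uniqueness of the exchange. For the second bullet I would produce the two exchange triangles $c^{*}\to b\to c$ and $c\to b'\to c^{*}$ by taking $b$ and $b'$ to be the sums of the objects corresponding to the two pairs of opposite sides of that quadrilateral, constructing the maps as cones in $D$, and then checking minimality so that the left-hand maps are $\add(C\setminus\{c\})$-envelopes and the right-hand maps are $\add(C\setminus\{c\})$-covers.

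To upgrade to a full cluster structure I would analyse the Gabriel quiver of $\add(C)$. Absence of loops follows because the relevant endomorphism algebras are local and irreducible maps do not connect an indecomposable to itself in $\ZZ\vec{A}_{+\infty}$, while the absence of $2$-cycles follows from the fact that two distinct triangles of a triangulation share at most one arc. Finally, I would show that the flip $c\mapsto c^{*}$ changes this quiver by precisely Fomin--Zelevinsky mutation at the vertex $c$, by checking locally around the flipped quadrilateral that the arrows into and out of $a$ are reversed and that the induced short paths are composed and cancelled exactly as the mutation rule prescribes; this is the $\infty$-gon analogue of the familiar type $A$ statement that a diagonal flip realises quiver mutation.

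The main obstacle I anticipate is not the combinatorics but the infinite rank: one must ensure that, even though $\add(C\setminus\{c\})$ has infinitely many indecomposable summands, the exchange triangles still have \emph{finite} middle terms $b,b'$ and that the required minimal left and right approximations genuinely exist, so that the envelope and cover conditions make sense and the Gabriel quiver is locally finite. Establishing functorial finiteness of the cluster tilting subcategories and the finiteness of these approximations---using the Hom-finiteness of $D$ together with the fact that only the finitely many arcs adjacent to $a$ in the triangulation contribute to the exchange---is the technical heart of the argument.
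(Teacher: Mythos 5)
The paper does not prove this statement: it is imported verbatim from Holm--J{\o}rgensen (\cite[Theorem C]{HJ}) and stated with a \qed, so there is no internal proof to compare against. Your outline is nonetheless a faithful reconstruction of the route taken in \cite{HJ}, as one can check against the ingredients the paper does quote from that source: the arc/indecomposable dictionary and the $\Hom$-computation (Lemma~\ref{non crossing arcs} here, i.e.\ \cite[Proposition 2.2 and Lemma 3.6]{HJ}), the identification of (weak) cluster tilting subcategories with triangulations (Theorem~\ref{triangulations correspond to maximal 1-orthogonal subcategories}, i.e.\ \cite[Theorems 4.3 and 4.4]{HJ}), and the quadrilateral-flip picture that underlies the minimal-arc Lemma~\ref{minimal arc}. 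You also correctly locate the genuine technical content in the infinite setting, namely the existence of the approximations and the finiteness of the exchange triangles' middle terms.

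Two caveats you should make explicit. First, \emph{clusters} in Definition~\ref{cluster structure} are drawn only from cluster tilting (not merely weak cluster tilting) subcategories, i.e.\ from triangulations that are locally finite or have a fountain; triangulations with a split fountain are excluded, and indeed for those the arc $(l,r)$ joining the two half-fountains sits in no quadrilateral and is not flippable, so your ``unique quadrilateral'' step would fail there. Restricting to genuine clusters avoids this, but you must then also check that a single diagonal flip of a locally finite or fountain triangulation again yields a locally finite or fountain triangulation, so that $(C\setminus\{c\})\cup\{c^{*}\}$ really is a cluster and not just a weak cluster tilting collection. Second, some sides of the flipped quadrilateral may be sides of the $\infty$-gon, which correspond to the zero object rather than to indecomposables of $D$; the middle terms $b,b'$ of the exchange triangles must be formed only from the genuine arcs among them. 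Neither point is an obstruction, but both need to be said for the argument to close.
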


The inspiration for the present work is the observation in \cite{HJ} that there is a relationship between the cluster tilting subcategories of $D$ and triangulations of the $\infty$-gon.  Here, we view the $\infty$-gon as a line of integers

\begin{center}
\begin{tikzpicture}[scale =.75]
\tikzstyle{every node}=[font=\small]
\draw (-6.5,0) -- (7.5,0);
\foreach \x in {-6,...,7}
	\draw (\x, 0.1) -- (\x, -0.1) node [below] {\x};
	\draw[] (7.5,0) -- (8,0) node [below] {$\ldots$};
	\draw[] (-6.5,0) -- (-7,0) node [below] {$\ldots$};
\end{tikzpicture}
\end{center}
An arc in the $\infty$-gon is a pair of integers $(m,n)$, such that $m \leq n-2$. 

\begin{center}
\begin{tikzpicture}[scale =.75]
\tikzstyle{every node}=[font=\small]
\draw (-5.5,0) -- (6.5,0);
\foreach \x in {-5,...,-4}
	\draw (\x, 0.1) -- (\x, -0.1);
\draw (-3,0.1) -- (-3,-0.1) node[below]{$m$};
\foreach \x in {-2,...,3}
	\draw (\x, 0.1) -- (\x, -0.1);
\draw (4,0.1) -- (4,-0.1) node[below]{$n$};
\foreach \x in {5,...,6}
	\draw (\x, 0.1) -- (\x, -0.1);
\draw[loosely dotted] (-6.5,0.5) -- (-6,0.5);
\draw[loosely dotted] (7,0.5) -- (7.5,0.5);
\path (-3,0) edge [out= 60, in= 120] (4,0);
\end{tikzpicture}
\end{center}
For pairs of integers of the form $(n,n+1)$, we speak of {\em sides} of the $\infty$-gon. We say that two arcs $(i,j)$ and $(m,n)$ in the $\infty$-gon intersect if either $i<m<j<n$ or $m<i<n<j$.  This of course corresponds to the geometric picture of intersecting arcs:

\begin{center}
\begin{tikzpicture}[scale =.75]
\tikzstyle{every node}=[font=\small]
\draw (-5.5,0) -- (8.5,0);
\draw (-3,0.1) -- (-3,-0.1) node[below]{$i$};
\draw (1,0.1) -- (1,-0.1) node[below]{$m$};
\draw (5,0.1) -- (5,-0.1) node[below]{$j$};
\draw (7,0.1) -- (7,-0.1) node[below]{$n$};

\path (-3,0) edge [out= 60, in= 120] (5,0);
\path (1,0) edge [out= 60, in= 120] (7,0);

\end{tikzpicture}
\end{center}

Now there is a natural bijection between the isomorphism classes of indecomposable objects of $D$ and arcs in the $\infty$-gon.  Following \cite[Remark~1.4]{HJ}, the Auslander--Reiten quiver of $D$, $\ZZ \vec{A}_{+\infty}$, may be given a coordinate system such that its vertices (corresponding to the isomorphism classes of indecomposable objects) are indexed by ordered pairs of integers $(m,n)$ with $m\leq n-2$; we refer the reader to \cite[page~281]{HJ} for a diagram.  Then each such pair naturally describes an arc in the $\infty$-gon and vice versa.

\begin{defn}
A {\em triangulation} of the $\infty$-gon is a maximal set of non-intersecting arcs. A triangulation is called {\em locally finite} if for every integer $n$ there are only finitely many arcs of the form $(m,n)$ or $(n,p)$. An integer $n$ in a non-locally finite triangulation is called a {\em right-fountain}, respectively a {\em left-fountain}, if there are infinitely many arcs of the form $(n,p)$, respectively of the form $(m,n)$. An integer $n$ is called a {\em fountain} of a triangulation if it is a right-fountain as well as a left-fountain in this triangulation. \end{defn}

Triangulations are important in this setting due to the following theorem.

\begin{theorem}[{\cite[Theorems 4.3 and 4.4]{HJ}}] \label{triangulations correspond to maximal 1-orthogonal subcategories}
A subcategory $\T$ of $D$ is weak cluster tilting if and only if the corresponding set of arcs $\mathfrak{t}$ is a triangulation of the $\infty$-gon. Furthermore, it is a cluster tilting subcategory if and only if the triangulation $\mathfrak{t}$ is locally finite or has a fountain. \qed
\end{theorem}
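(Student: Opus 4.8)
The plan is to build a combinatorial dictionary between the homological vanishing conditions defining (weak) cluster tilting and the crossing of arcs, and then to dispatch the two equivalences in turn. The starting point is the basic $\Hom$-computation in $D$: for indecomposable objects $X_a$ and $X_b$ corresponding to arcs $a$ and $b$, one has $\Hom_D(X_a,\Sigma X_b)\neq 0$ if and only if $a$ and $b$ intersect. I would read this off directly from the coordinatised Auslander--Reiten quiver $\ZZ\vec{A}_{+\infty}$, combining the standard mesh/hammock description of which $\Hom$-spaces between indecomposables in a $\ZZ\vec{A}_{+\infty}$-category are nonzero with the action of $\Sigma$ on coordinates. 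Since every object of $D$ is a finite direct sum of indecomposables and $\T$ is determined by the set $\mathfrak{t}$ of arcs labelling its indecomposable summands, this reduces every vanishing condition appearing in the definition of weak cluster tilting to a statement purely about arcs. Note that crossing of arcs is manifestly symmetric in $a$ and $b$, so the same criterion governs $\Hom_D(X_a,\Sigma X_b)$ and $\Hom_D(X_b,\Sigma X_a)$.

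With this dictionary in hand I would prove the first sentence. Using the identification $\Hom_D(\Sigma^{-1}X_b,X_c)\cong\Hom_D(X_b,\Sigma X_c)$, membership of $X_c$ in $(\Sigma^{-1}\T)^{\perp}$ and membership in ${}^{\perp}(\Sigma\T)$ both translate, via the dictionary and the symmetry of crossing, to the single condition that the arc $c$ crosses no arc of $\mathfrak{t}$. The inclusion $\T\subseteq(\Sigma^{-1}\T)^{\perp}\cap{}^{\perp}(\Sigma\T)$ then says exactly that $\mathfrak{t}$ is a set of pairwise non-intersecting arcs, while the reverse inclusions $\T\supseteq(\Sigma^{-1}\T)^{\perp}$ and $\T\supseteq{}^{\perp}(\Sigma\T)$ say that every arc crossing nothing in $\mathfrak{t}$ already belongs to $\mathfrak{t}$, i.e.\ that $\mathfrak{t}$ is maximal among non-intersecting sets of arcs. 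Hence $\T=(\Sigma^{-1}\T)^{\perp}={}^{\perp}(\Sigma\T)$ holds precisely when $\mathfrak{t}$ is a triangulation.

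For the second sentence I would show that, among triangulations, functorial finiteness of $\T$ is equivalent to $\mathfrak{t}$ being locally finite or having a fountain. Working on indecomposables, the question is whether each $X_c$ admits a right (and, dually, a left) $\add(\T)$-approximation; because $D$ contains only finite direct sums, such an approximation exists if and only if the family of arcs of $\mathfrak{t}$ admitting nonzero maps to $c$ can be spanned by finitely many of its members. For the "if" direction I would construct the approximations explicitly: local finiteness bounds, at each endpoint, the number of arcs of $\mathfrak{t}$ that can contribute, and a fountain at $n$ organises the arcs through $n$ so that all but finitely many factor through finitely many chosen ones; in either case the approximation is a genuine, finite object of $D$. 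For the converse I would argue contrapositively: if $\mathfrak{t}$ is not locally finite and has no fountain, then by the definitions some integer is a one-sided fountain only, so the arcs accumulate at it from a single side, and I would exhibit an arc $c$ all of whose candidate approximations must omit arcs that fail to factor through the chosen ones, forcing an infinite direct sum and hence precluding any approximation in $D$.

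The main obstacle is exactly this last functorial-finiteness analysis. The weak cluster tilting equivalence is a clean translation through the $\Hom$-crossing dictionary, but deciding finiteness of approximations demands a careful case analysis of the accumulation behaviour of arcs---separating locally finite triangulations, genuine fountains, and the pathological one-sided fountains---together with a precise account of which maps in $D$ factor through which, so as to certify both the existence of finite approximations in the good cases and their impossibility in the bad ones. It is precisely here that the fountain condition surfaces as the combinatorial shadow of functorial finiteness.
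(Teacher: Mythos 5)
The paper does not prove this statement: it is imported verbatim from Holm--J\o rgensen \cite[Theorems 4.3 and 4.4]{HJ}, so the only comparison available is with their argument, which your outline essentially reconstructs. Your first paragraph and the first equivalence are sound and essentially complete: the $\Hom$--crossing dictionary is exactly the paper's Lemma~\ref{non crossing arcs} (note $\Sigma$ acts by $(m,n)\mapsto(m-1,n-1)$, so ``$\Sigma x$ crosses $\Sigma y$'' is the same as ``$x$ crosses $y$'', which gives the symmetry you invoke), and with it the condition $\T=(\Sigma^{-1}\T)^{\perp}={}^{\perp}(\Sigma\T)$ translates precisely into ``pairwise non-crossing and maximal'', i.e.\ a triangulation.

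The second equivalence is where the gap lies, and you have correctly located it but not closed it. Two concrete issues. First, your mechanism for the fountain case --- ``a fountain at $n$ organises the arcs through $n$ so that all but finitely many factor through finitely many chosen ones'' --- is not how the factorization actually goes. Computing with the dictionary, $\Hom\bigl((n,p'),(n,p)\bigr)\neq 0$ forces $p'\leq p$: maps between right-fountain arcs go from shorter to longer, never the other way. Hence for an object $x$ receiving nonzero maps from $(n,p)$ for all large $p$, no finite collection of right-fountain arcs can absorb these maps; in the genuine fountain case the right approximation must be assembled using arcs on the \emph{left} side of the fountain (and the sides), and verifying that the composites $(n,p)\to(m,n)\to x$ realize the original maps is the actual content of \cite[Theorem 4.4]{HJ}. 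Without that computation the ``if'' direction is unproved. Second, the ``only if'' direction is stated as ``I would exhibit an arc $c$'' without exhibiting one; the same computation shows that if $n$ is a right-fountain that is not a left-fountain, then any $x=(i,j)$ with $i\leq n-2\leq j-2$ receives nonzero maps from infinitely many $(n,p)$, none of which factors through a shorter fountain arc, and one must check there is no alternative routing through the rest of $\T$ (this is where the absence of a left-fountain at $n$ is used). So the skeleton is the right one --- it is the same as Holm and J\o rgensen's --- but the factorization analysis that makes the fountain condition appear is named rather than carried out, and one of your heuristics for it is false as stated.
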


\section{Infinite rank cluster algebra structures from triangulations of the \texorpdfstring{$\infty$}{infinity}-gon}\label{Cluster algebra structures from triangulations}

Let $\vec{A}_n$ be any orientation of the Dynkin diagram $A_n$. In \cite{FZ2} it was noted that the combinatorics of cluster algebras associated to $\vec{A}_n$ can be expressed by the model of triangulations of the $(n+3)$-gon. The mutable cluster variables correspond to diagonals in $(n+3)$-gon, clusters correspond to triangulations and mutation has its geometric equivalent in the diagonal flip, which replaces a diagonal in a triangulation by the unique other diagonal such that the result is again a triangulation.  As noted above, this same combinatorial model encodes the behaviour of Pl\"{u}cker coordinates for the Grassmannian $\Gr(2,n+3)$, with a diagonal between vertices $i$ and $j$ corresponding to a minor $\Delta^{ij}$, and the short Pl\"{u}cker relations reflected in diagonal flips.  So these identifications allows us to see that the homogeneous coordinate ring $\mathbb{C}[\Gr(2,n+3)]$ of the Grassmannian $\Gr(2,n+3)$, obtained via the Pl\"ucker embedding, is a cluster algebra of type $A_{n}$.

To extend this theory to the infinite case and  make the step from the cluster category $D$ of \cite{HJ} (c.f.\ Definition \ref{category-D}) to cluster algebras, we define cluster algebras of infinite rank. 

\begin{defn}\label{cluster algebras of infinite rank}
Let $Q$ be a quiver without loops or $2$-cycles and with a countably infinite number of vertices labelled by all integers $i \in \mathbb{Z}$. Furthermore, for each vertex $i$ of $Q$ let the number of arrows incident with $i$ be finite. Consider the field of rational functions $\mathbb{Q}(\{ X_i\}_{i \in \mathbb{Z}})$ over a countably infinite set of variables. An {\em infinite initial seed} is the pair $(Q, \{X_i\}_{i \in \mathbb{Z}})$ consisting of the quiver $Q$ and the set of generating variables $\{X_i\}_{i \in \mathbb{Z}}$. By finite sequences of mutation at vertices of $Q$ and simultaneous mutation of the set $\{X_i\}_{i \in \mathbb{Z}}$ using the exchange relation (\ref{exchange relation}) as previously, we get a family of infinite seeds. The sets of variables in these seeds are called the {\em infinite clusters} and their elements are called the {\em cluster variables}. The {\em cluster algebra of infinite rank of type $Q$} is the subalgebra of $\mathbb{Q}(\{ X_i\}_{i \in \mathbb{Z}})$ generated by the cluster variables.
\end{defn}

Note that the requirement that every vertex of $Q$ is incident with only finitely many arrows ensures that the products in the exchange relations are finite and thus well-defined mutated variables are obtained.

We proceed in three steps.  First we examine the structure of the category $D$ in more detail and classify the exchange quivers associated to the resulting triangulations of the $\infty$-gon.  Next we examine the mutation equivalence classes of these and finally we use this information to obtain infinite rank cluster algebra structures.

\subsection{Classification of exchange quivers associated to triangulations}

To find cluster algebras of infinite rank that correspond to cluster structures on the category $D$ we begin by examining the triangulations of the $\infty$-gon, which we know to correspond to cluster tilting subcategories of $D$. On the categorical level, we want mutations of clusters to correspond to mutations of cluster tilting subcategories at (isomorphism classes of) indecomposables in the category $D$, which again correspond to diagonal flips in triangulations of the $\infty$-gon. From this point of view it is crucial that we only allow finite sequences of mutations in Definition \ref{cluster algebras of infinite rank}, as is illustrated by the following example.

\begin{ex}\label{ex:fin-many-mutations}
Let the triangulation $\mathfrak{t}$ consist of the standard fountain at $0$, i.e.\ the triangulation $\mathfrak{t}=\{(-n,0),(0,n)\}_{n \geq 2}$.

\begin{center}
\begin{tikzpicture}[scale = .75]
\tikzstyle{every node}=[font=\small]
\draw (-7.5,0) -- (7.5,0);
\foreach \x in {-7,...,7}
	\draw (\x, 0.1) -- (\x, -0.1) node [below] {\x};
\draw[loosely dotted] (-7.5,0.5) -- (-7,0.5);
\draw[loosely dotted] (7,0.5) -- (7.5,0.5);
\path (0,0) edge [out= 60, in= 120] (2,0);
\path (0,0) edge [out= 60, in= 120] (3,0);
\path (0,0) edge [out= 60, in= 120] (4,0);
\path (0,0) edge [out= 60, in= 120] (5,0);
\path (0,0) edge [out= 60, in= 120] (6,0);
\path (-5,0) edge [out= 60, in= 120] (0,0);
\path (-4,0) edge [out= 60, in= 120] (0,0);
\path (-6,0) edge [out= 60, in= 120] (0,0);
\path (-3,0) edge [out= 60, in= 120] (0,0);
\path (-2,0) edge [out= 60, in= 120] (0,0);
\path (0,0) edge [out= 60, in= 120] (7,0);
\path (-7,0) edge [out= 60, in= 120] (0,0);

\end{tikzpicture}
\end{center}
Let $\mu_k$ denote the diagonal flip of the arc $(0,k)$ and let $\mathfrak{t}_k$ be the triangulation 
\begin{align*}
\mathfrak{t}_k & = \mu_k \circ \mu_{k-1} \circ \cdots \circ \mu_3 \circ \mu_2 (\mathfrak{t})\\
& = \{(-n,0)\}_{n \in \mathbb{Z}_{\geq 2}} \cup \{(1,3),(1,4), \ldots, (1,k)\} \cup \{(0,k+m)\}_{m \in \mathbb{Z}_{\geq 1}}.
\end{align*}
Pictorially, $\mathfrak{t}_{k-1}$ is transformed into $\mathfrak{t}_{k}$ by $\mu_{k}$ as follows:
\begin{center}
\begin{tikzpicture}[scale = .75]
\tikzstyle{every node}=[font=\small]
\draw (-7.5,0) -- (8.5,0);
\foreach \x in {-7,...,4}
	\draw (\x, 0.1) -- (\x, -0.1) node [below] {\x};
	\node[below] (Punkte) at (5, -0.25) {$\ldots$};
	\draw (6, 0.1) -- (6, -0.1) node [below] {$k$};
	\draw (7, 0.1) -- (7, -0.1) node [below] {$k+1$};
	\draw (8, 0.1) -- (8, -0.1);
\draw[loosely dotted] (-7.5,0.5) -- (-7,0.5);
\draw[loosely dotted] (8,0.5) -- (8.5,0.5);
\path (1,0) edge [out= 60, in= 120] (3,0);
\path (1,0) edge [out= 60, in= 120] (4,0);
\node (punkte) at (4.5,0.5) {$\ldots$};
\path (1,0) edge [out= 60, in= 120] (6,0);
\path (-5,0) edge [out= 60, in= 120] (0,0);
\path (-4,0) edge [out= 60, in= 120] (0,0);
\path (-6,0) edge [out= 60, in= 120] (0,0);
\path (-3,0) edge [out= 60, in= 120] (0,0);
\path (-2,0) edge [out= 60, in= 120] (0,0);
\path (0,0) edge [out= 60, in= 120] (6,0);
\path (0,0) edge [out= 60, in= 120] (7,0);
\path (0,0) edge [out= 60, in= 120] (8,0);
\path (-7,0) edge [out= 60, in= 120] (0,0);

\draw[->] (0,-1) --  (0,-3); 
\node (mu) at (0.3,-2) {$\mu_k$};

\draw[yshift=-5cm] (-7.5,0) -- (8.5,0);
\foreach \x in {-7,...,4}
	\draw[yshift=-5cm] (\x, 0.1) -- (\x, -0.1) node [below] {\x};
	\node[yshift=-3.75cm,below] (Punkte) at (5, -0.25) {$\ldots$};
	\draw[yshift=-5cm] (6, 0.1) -- (6, -0.1) node [below] {$k$};
	\draw[yshift=-5cm] (7, 0.1) -- (7, -0.1) node [below] {$k+1$};
	\draw[yshift=-5cm] (8, 0.1) -- (8, -0.1);
\draw[yshift=-5cm,loosely dotted] (-7.5,0.5) -- (-7,0.5);
\draw[yshift=-5cm,loosely dotted] (8,0.5) -- (8.5,0.5);
\path[yshift=-5cm] (1,0) edge [out= 60, in= 120] (3,0);
\path[yshift=-5cm] (1,0) edge [out= 60, in= 120] (4,0);
\node[yshift=-3.75cm] (punkte) at (4.5,0.5) {$\ldots$};
\path[yshift=-5cm] (1,0) edge [out= 60, in= 120] (6,0);
\path[yshift=-5cm] (-5,0) edge [out= 60, in= 120] (0,0);
\path[yshift=-5cm] (-4,0) edge [out= 60, in= 120] (0,0);
\path[yshift=-5cm] (-6,0) edge [out= 60, in= 120] (0,0);
\path[yshift=-5cm] (-3,0) edge [out= 60, in= 120] (0,0);
\path[yshift=-5cm] (-2,0) edge [out= 60, in= 120] (0,0);
\path[yshift=-5cm] (0,0) edge [out= 60, in= 120] (7,0);
\path[yshift=-5cm] (1,0) edge [out= 60, in= 120] (7,0);
\path[yshift=-5cm] (0,0) edge [out= 60, in= 120] (8,0);
\path[yshift=-5cm] (-7,0) edge [out= 60, in= 120] (0,0);

\end{tikzpicture}
\end{center}
Allowing infinite sequences of diagonal flips would yield the split fountain $$\mathfrak{t}' = \{(-n,0),(1,n+1)\}_{n \geq 2}.$$

\begin{center}
\begin{tikzpicture}[scale = .75]
\tikzstyle{every node}=[font=\small]
\draw[yshift=-5cm] (-7.5,0) -- (7.5,0);
\foreach \x in {-7,...,7}
	\draw[yshift=-5cm] (\x, 0.1) -- (\x, -0.1) node [below] {\x};
\draw[yshift=-5cm,loosely dotted] (-7.5,0.5) -- (-7,0.5);
\draw[yshift=-5cm,loosely dotted] (7,0.5) -- (7.5,0.5);
\path[yshift=-5cm] (1,0) edge [out= 60, in= 120] (3,0);
\path[yshift=-5cm] (1,0) edge [out= 60, in= 120] (4,0);
\path[yshift=-5cm] (1,0) edge [out= 60, in= 120] (5,0);
\path[yshift=-5cm] (1,0) edge [out= 60, in= 120] (6,0);
\path[yshift=-5cm] (-5,0) edge [out= 60, in= 120] (0,0);
\path[yshift=-5cm] (-4,0) edge [out= 60, in= 120] (0,0);
\path[yshift=-5cm] (-6,0) edge [out= 60, in= 120] (0,0);
\path[yshift=-5cm] (-3,0) edge [out= 60, in= 120] (0,0);
\path[yshift=-5cm] (-2,0) edge [out= 60, in= 120] (0,0);
\path[yshift=-5cm] (1,0) edge [out= 60, in= 120] (7,0);
\path[yshift=-5cm] (-7,0) edge [out= 60, in= 120] (0,0);

\end{tikzpicture}
\end{center}
Repeating the same pattern of infinite mutations reflected on the left-fountain yields the following configuration of arcs:
\begin{center}
\begin{tikzpicture}[scale = .75]
\tikzstyle{every node}=[font=\small]
\draw[yshift=-5cm] (-7.5,0) -- (8.5,0);
\foreach \x in {-7,...,7}
	\draw[yshift=-5cm] (\x, 0.1) -- (\x, -0.1) node [below] {\x};
	\draw[yshift=-5cm] (8, 0.1) -- (8, -0.1);
\draw[yshift=-5cm,loosely dotted] (-7.5,0.5) -- (-7,0.5);
\draw[yshift=-5cm,loosely dotted] (8,0.5) -- (8.5,0.5);
\path[yshift=-5cm] (1,0) edge [out= 60, in= 120] (3,0);
\path[yshift=-5cm] (1,0) edge [out= 60, in= 120] (4,0);
\path[yshift=-5cm] (1,0) edge [out= 60, in= 120] (5,0);
\path[yshift=-5cm] (1,0) edge [out= 60, in= 120] (6,0);
\path[yshift=-5cm] (-5,0) edge [out= 60, in= 120] (-1,0);
\path[yshift=-5cm] (-4,0) edge [out= 60, in= 120] (-1,0);
\path[yshift=-5cm] (-6,0) edge [out= 60, in= 120] (-1,0);
\path[yshift=-5cm] (-3,0) edge [out= 60, in= 120] (-1,0);
\path[yshift=-5cm] (1,0) edge [out= 60, in= 120] (7,0);
\path[yshift=-5cm] (1,0) edge [out= 60, in= 120] (8,0);
\path[yshift=-5cm] (-7,0) edge [out= 60, in= 120] (-1,0);

\end{tikzpicture}
\end{center}
which is not a triangulation of the $\infty$-gon, and hence does not correspond to a (weak) cluster tilting subcategory in $D$, by Theorem \ref{triangulations correspond to maximal 1-orthogonal subcategories}.
\end{ex}

To find the appropriate quiver $Q$ to encode the combinatorial data of the cluster algebra we are looking for, we generalize a well-known algorithm from the finite rank type $A_{n}$ case---for the construction of the exchange quiver for a cluster from a given triangulation of an $(n+3)$-gon---to triangulations of the $\infty$-gon.


\begin{defn} \label{exchange quiver}
Let $\mathfrak{t}$ be a triangulation of the $\infty$-gon. Then we construct the exchange quiver $Q_{\mathfrak{t}}$ by the following algorithm.
\begin{itemize}
\item[(1)]{For each side $(i,i+1)$ of the $\infty$-gon there is a frozen vertex in $Q_{\mathfrak{t}}$ which we label by $(i,i+1)$.}
\item[(2)]{For each arc $(i,j)$ (with $j\neq i+1$) in $\mathfrak{t}$ there is a mutable vertex in $Q_{\mathfrak{t}}$ which we label by $(i,j)$.}
\item[(3)]{We put a clockwise orientation on all triangles arising in the triangulation and draw arrows between the sides and diagonals accordingly, as illustrated below. Then there is an arrow from a vertex $(i,j)$ to a vertex $(k,l)$ in $Q_{\mathfrak{t}}$ whenever there is an equally-oriented arrow between the corresponding diagonals or sides and where we discard all arrows between frozen vertices.}
\end{itemize}

Triangulation:
\begin{center}
\begin{tikzpicture}[scale =0.75]
\tikzstyle{every node}=[font=\small]
\draw (-5.5,0) -- (6.5,0);
\draw (-3,0.1) -- (-3,-0.1) node[below]{$i$};
\draw (1,0.1) -- (1,-0.1) node[below]{$j$};
\draw (6,0.1) -- (6,-0.1) node[below]{$k$};

\path (-3,0) edge [out= 60, in= 120] (1,0);
\path (1,0) edge [out= 60, in= 120] (6,0);
\path(-3,0) edge [out= 60, in= 120] (6,0);
\draw[<-] (0.75,1.75) arc (60+90:360+90:.3);
\draw[<-, dotted] (0,1) -- (1.8,1);
\draw[<-, dotted] (2.2,1.25) -- (2.2,2);
\draw[->, dotted] (-0.2,1.25) -- (-0.2,1.9);
\end{tikzpicture}
\end{center}

Exchange quiver:
\begin{center}
\begin{tikzpicture}
\tikzstyle{every node}=[font=\small]
\node (1) at (0,0) {$(i,j)$};
\node (2) at (0,3) {$(j,k)$};
\node (3) at (3,3) {$(i,k)$};

\draw[<-] (1) -- (2);
\draw[<-] (2) -- (3);
\draw[<-] (3) -- (1);
\end{tikzpicture}
\end{center}

\end{defn}

\begin{ex}
Consider the ``standard leapfrog'' triangulation
$$
\mathfrak{t} = \{(-n,n)\}_{n \in \mathbb{Z}_{\geq 1}} \cup \{(-m,m+1)\}_{m \in \mathbb{Z}_{\geq 1}}.
$$
\begin{center}
\begin{tikzpicture}[scale = .75, transform shape]
\tikzstyle{every node}=[font=\small]
\draw[->] (-6.5,0) -- (7.5,0);
\foreach \x in {-6,...,7}
	\draw (\x, 0.1) -- (\x, -0.1) node [below] {\x};
\draw[loosely dotted] (-6.5,0.5) -- (-6,0.5);
\draw[loosely dotted] (7,0.5) -- (7.5,0.5);
\path[->] (-1,0) edge [out= 60, in= 120] (1,0);
\path[->] (-1,0) edge [out= 60, in= 120] (2,0);
\path[->] (-2,0) edge [out= 60, in= 120] (2,0);
\path[->] (-2,0) edge [out= 60, in= 120] (3,0);
\path[->] (-3,0) edge [out= 60, in= 120] (3,0);
\path[->] (-3,0) edge [out= 60, in= 120] (4,0);
\path[->] (-4,0) edge [out= 60, in= 120] (4,0);
\path[->] (-4,0) edge [out= 60, in= 120] (5,0);
\path[->] (-5,0) edge [out= 60, in= 120] (5,0);
\path[->] (-5,0) edge [out= 60, in= 120] (6,0);
\path[->] (-6,0) edge [out= 60, in= 120] (6,0);
\path[->] (-6,0) edge [out= 60, in= 120] (7,0);
\end{tikzpicture}
\end{center}
The corresponding exchange quiver $Q_{\mathfrak{t}}$ has as principal part the quiver with underlying graph $A_{+\infty}$, with a sink-source orientation; see Figure \ref{fig:standard leapfrog} on page \pageref{fig:standard leapfrog}. The frozen vertices in this picture are marked by rectangles as usual.

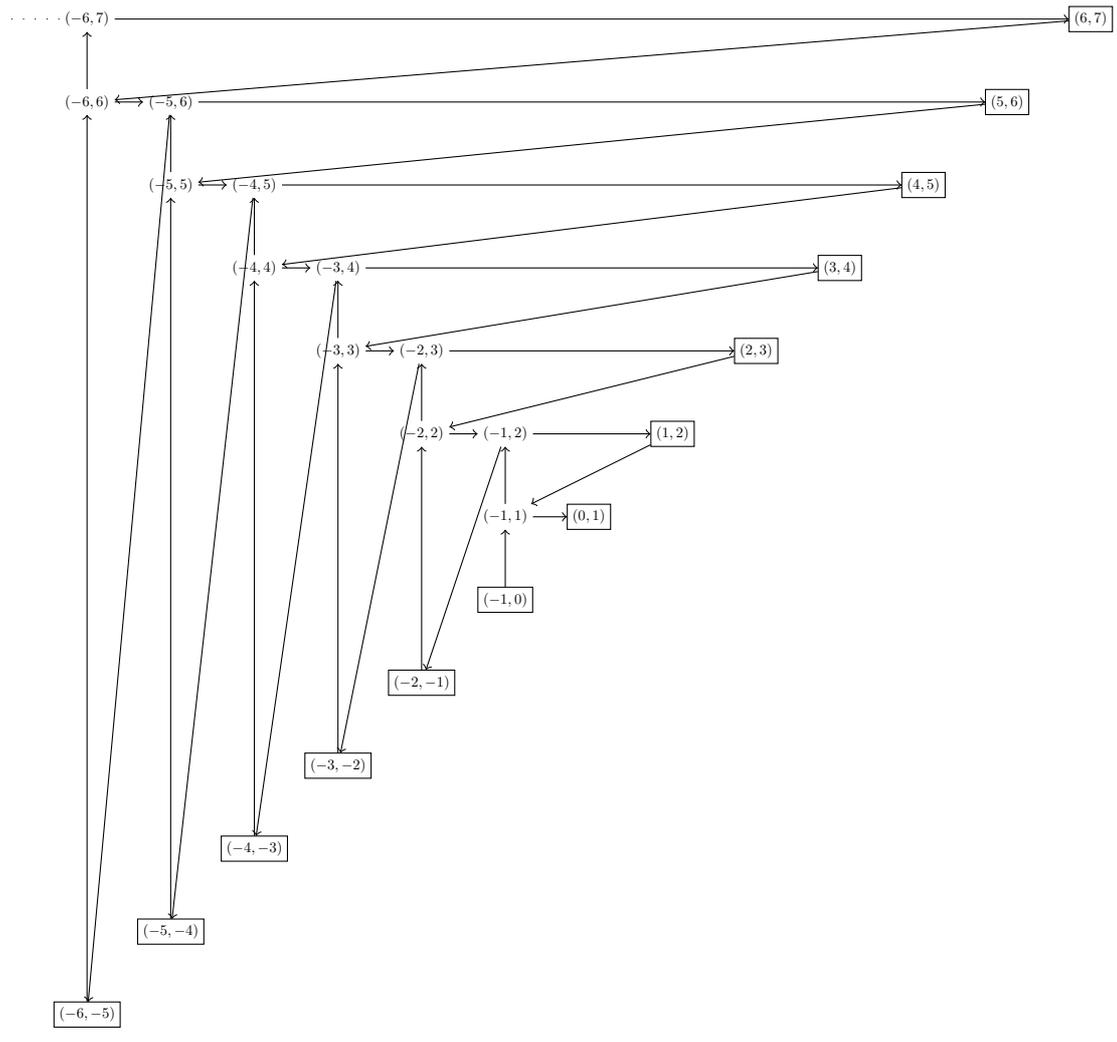
\begin{figure}
\begin{center}
\begin{tikzpicture}[scale=.55, transform shape]
\tikzstyle{every node}=[font=\small]
\node (-11) at (-2,2) {$(-1,1)$};
\node (-12) at (-2,4) {$(-1,2)$};
\node (-22) at (-4,4) {$(-2,2)$};
\node (-23) at (-4,6) {$(-2,3)$};
\node (-33) at (-6,6) {$(-3,3)$};
\node (-34) at (-6,8) {$(-3,4)$};
\node (-44) at (-8,8) {$(-4,4)$};
\node (-45) at (-8,10) {$(-4,5)$};
\node (-55) at (-10,10) {$(-5,5)$};
\node (-56) at (-10,12) {$(-5,6)$};
\node (-66) at (-12,12) {$(-6,6)$};
\node (-67) at (-12,14) {$(-6,7)$};

\node[draw, shape=rectangle] (01) at (0,2) {$(0,1)$};
\node[draw, shape=rectangle] (-10) at (-2,0) {$(-1,0)$};
\node[draw, shape=rectangle] (12) at (2,4) {$(1,2)$};
\node[draw, shape=rectangle] (23) at (4,6) {$(2,3)$};
\node[draw, shape=rectangle] (-2-1) at (-4,-2) {$(-2,-1)$};
\node[draw, shape=rectangle] (34) at (6,8) {$(3,4)$};
\node[draw, shape=rectangle] (-3-2) at (-6,-4) {$(-3,-2)$};
\node[draw, shape=rectangle] (45) at (8,10) {$(4,5)$};
\node[draw, shape=rectangle] (-4-3) at (-8,-6) {$(-4,-3)$};
\node[draw, shape=rectangle] (56) at (10,12) {$(5,6)$};
\node[draw, shape=rectangle] (-5-4) at (-10,-8) {$(-5,-4)$};
\node[draw, shape=rectangle] (67) at (12,14) {$(6,7)$};
\node[draw, shape=rectangle] (-6-5) at (-12,-10) {$(-6,-5)$};

\draw[->](-11) -- (01);
\draw[->](-10) -- (-11);
\draw[->](-11) -- (-12);
\draw[->](-12) -- (12);
\draw[->](12) -- (-11);
\draw[->](-2-1) -- (-22);
\draw[->](-22) -- (-12);
\draw[->](-12) -- (-2-1);
\draw[->](-22) -- (-23);
\draw[->](-23) -- (23);
\draw[->](23) -- (-22);
\draw[->](-33) -- (-23);
\draw[->](-23) -- (-3-2);
\draw[->](-3-2) -- (-33);
\draw[->](-33) -- (-34);
\draw[->](-34) -- (34);
\draw[->](34) -- (-33);
\draw[->](-44) -- (-34);
\draw[->](-4-3) -- (-44);
\draw[->](-34) -- (-4-3);
\draw[->](-44) -- (-45);
\draw[->](-45) -- (45);
\draw[->](45) -- (-44);
\draw[->](-55) -- (-45);
\draw[->](-45) -- (-5-4);
\draw[->](-5-4) -- (-55);
\draw[->](-55) -- (-56);
\draw[->](-56) -- (56);
\draw[->](56) -- (-55);
\draw[->](-66) -- (-56);
\draw[->](-56) -- (-6-5);
\draw[->](-6-5) -- (-66);
\draw[->](-66) -- (-67);
\draw[->](-67) -- (67);
\draw[->](67) -- (-66);
\draw[loosely dotted](-67) -- (-14,14);
\end{tikzpicture}
\end{center}
\caption{Quiver corresponding to the standard leapfrog triangulation of the $\infty$-gon.}
\label{fig:standard leapfrog}
\end{figure}
On the other hand, if we consider a triangulation with a fountain at $0$, which locally looks like this
\begin{center}
\begin{tikzpicture}[scale = .75, transform shape]
\tikzstyle{every node}=[font=\small]
\draw[->] (-6.5,0) -- (7.5,0);
\foreach \x in {-6,...,7}
	\draw (\x, 0.1) -- (\x, -0.1) node [below] {\x};
\draw[loosely dotted] (-6.5,0.5) -- (-6,0.5);
\draw[loosely dotted] (7,0.5) -- (7.5,0.5);
\path (0,0) edge [out= 60, in= 120] (5,0);
\path (0,0) edge [out= 60, in= 120] (3,0);
\path (1,0) edge [out= 60, in= 120] (3,0);
\path (3,0) edge [out= 60, in= 120] (5,0);
\path (-5,0) edge [out= 60, in= 120] (0,0);
\path (-4,0) edge [out= 60, in= 120] (-2,0);
\path (-2,0) edge [out= 60, in= 120] (0,0);
\path (-5,0) edge [out= 60, in= 120] (-2,0);
\path (0,0) edge [out= 60, in= 120] (7,0);
\path (5,0) edge [out= 60, in= 120] (7,0);
\path (-6,0) edge [out= 60, in= 120] (0,0);
\end{tikzpicture}
\end{center}
we get an altogether different picture, see Figure \ref{fig:fountain} on page \pageref{fig:fountain}. In particular, the exchange quiver for this triangulation is disconnected.
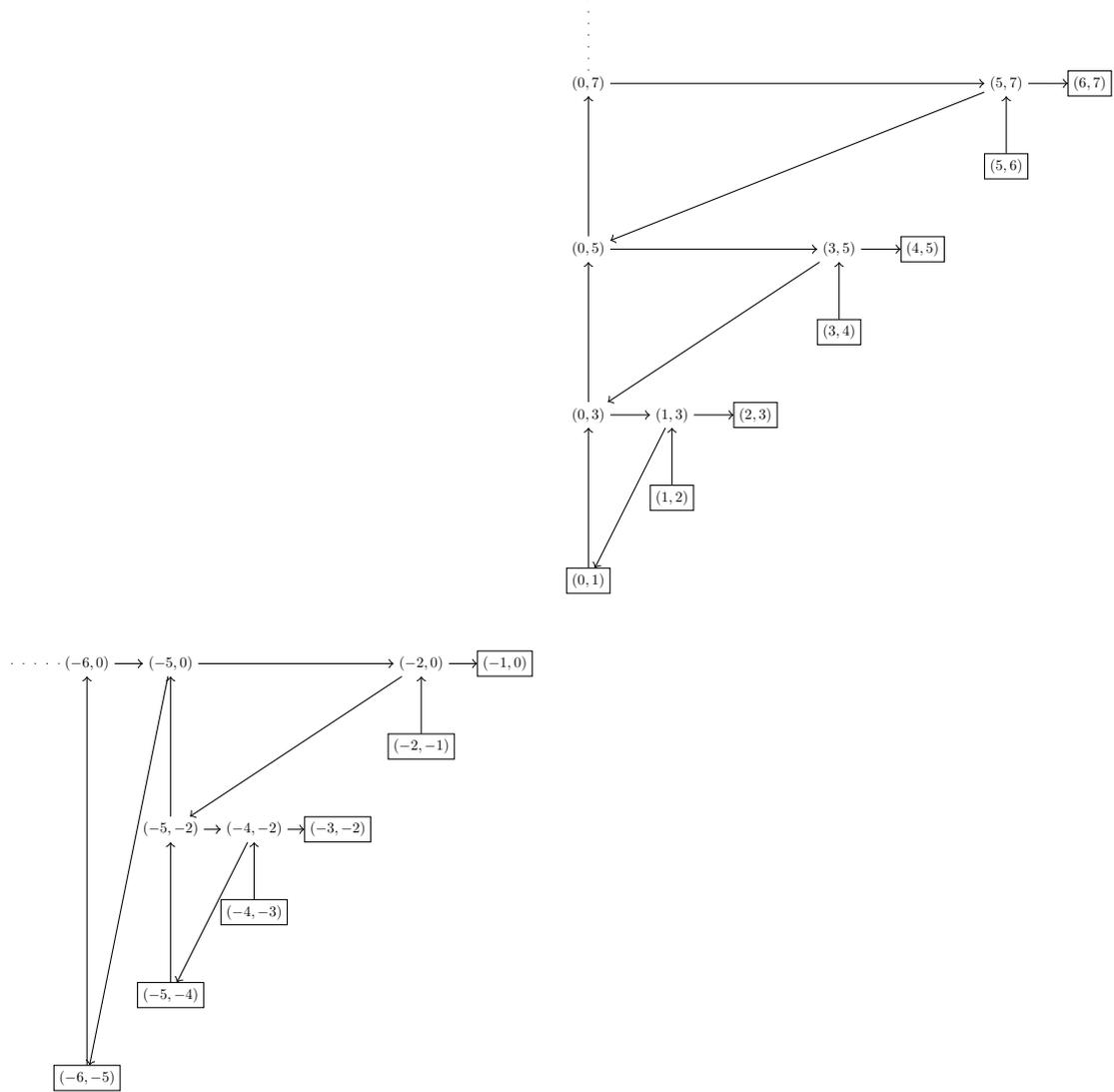
\begin{figure}
\begin{center}
\begin{tikzpicture}[scale=.55, transform shape]
\tikzstyle{every node}=[font=\small]
\node[draw, shape=rectangle] (01) at (0,2) {$(0,1)$};
\node[draw, shape=rectangle] (12) at (2,4) {$(1,2)$};
\node[draw, shape=rectangle] (23) at (4,6){$(2,3)$};
\node[draw, shape=rectangle] (34) at (6,8)  {$(3,4)$};
\node[draw, shape=rectangle] (45) at (8,10) {$(4,5)$};
\node[draw, shape=rectangle] (56) at (10,12) {$(5,6)$};
\node[draw, shape=rectangle] (67) at (12,14) {$(6,7)$};
\node (03) at (0,6) {$(0,3)$};
\node (05) at (0,10) {$(0,5)$};
\node (07) at (0,14){$(0,7)$};
\node (13) at (2,6) {$(1,3)$};
\node (35) at (6,10){$(3,5)$};
\node (57) at (10,14){$(5,7)$};

\draw[->] (01) -- (03);
\draw[->] (03) -- (05);
\draw[->] (05)-- (35);
\draw[->] (35)-- (45);
\draw[->] (34)-- (35);
\draw[->] (03)-- (13);
\draw[->] (13)-- (23);
\draw[->] (12)-- (13);
\draw[->] (13)-- (01);
\draw[->] (35)-- (03);
\draw[->] (05)-- (07);
\draw[->] (07)-- (57);
\draw[->] (57)-- (05);
\draw[->] (56)-- (57);
\draw[->] (57)-- (67);
\draw[loosely dotted] (07) -- (0,16);

\node (-60) at (-12,0) {$(-6,0)$};
\node (-50) at (-10,0){$(-5,0)$};
\node (-20) at (-4,0){$(-2,0)$};
\node (-5-2) at (-10,-4) {$(-5,-2)$};
\node (-4-2) at (-8,-4) {$(-4,-2)$};
\node[draw, shape=rectangle] (-10) at (-2,0){$(-1,0)$};
\node[draw, shape=rectangle] (-2-1) at (-4,-2){$(-2,-1)$};
\node[draw, shape=rectangle] (-3-2) at (-6,-4){$(-3,-2)$};
\node[draw, shape=rectangle] (-4-3) at (-8,-6){$(-4,-3)$};
\node[draw, shape=rectangle] (-5-4) at (-10,-8){$(-5,-4)$};
\node[draw, shape=rectangle] (-6-5) at (-12,-10){$(-6,-5)$};

\draw[->] (-20)-- (-10);
\draw[->] (-50)-- (-20);
\draw[->] (-5-2)-- (-50);
\draw[->] (-5-4)-- (-5-2);
\draw[->] (-20)-- (-5-2);
\draw[->] (-4-3)-- (-4-2);
\draw[->] (-4-2)-- (-3-2);
\draw[->] (-2-1)-- (-20);
\draw[->] (-5-2)-- (-4-2);
\draw[->] (-4-2)-- (-5-4);
\draw[->] (-60)-- (-50);
\draw[->] (-50)-- (-6-5);
\draw[->] (-6-5)-- (-60);
\draw[loosely dotted] (-60)-- (-14,0);
\end{tikzpicture}
\end{center}
\caption{Quiver for a fountain triangulation of the $\infty$-gon.}
\label{fig:fountain}
\end{figure}
\end{ex}

In general, the different types of triangulations of the $\infty$-gon yield different types of exchange quivers. We will need to consider separately each of the three different types arising from the following classification.

\begin{lm}[{\cite[Lemma~3.3]{HJ12}}]
Let $\mathfrak{t}$ be a triangulation of the $\infty$-gon. Then either $\mathfrak{t}$
\begin{itemize}
\item[(i)]{is locally finite,}
\item[(ii)]{has a fountain or}
\item[(iii)]{has a split fountain}. \qed
\end{itemize}
\end{lm}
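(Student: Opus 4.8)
The plan is to reduce everything to a single key implication---that a right-fountain forces a left-fountain---and then read off the trichotomy. First I would dispose of case (i): if $\mathfrak{t}$ is locally finite we are done. Otherwise, by definition some integer has infinitely many incident arcs, so, splitting these into arcs approaching from the left and from the right, that integer is a left-fountain or a right-fountain. The order-reversing involution $(m,n)\mapsto(-n,-m)$ preserves triangulations and interchanges left- and right-fountains, so I may assume without loss of generality that there is a right-fountain at some vertex $b$.

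The heart of the argument is then the claim that \emph{a right-fountain at $b$ forces a left-fountain somewhere.} I would first record the observation that no arc straddles $b$: if $(m,n)\in\mathfrak{t}$ with $m<b<n$, pick $(b,p)\in\mathfrak{t}$ with $p>n$ (possible since $b$ is a right-fountain); then $m<b<n<p$, so $(m,n)$ and $(b,p)$ intersect, a contradiction. Hence every arc of $\mathfrak{t}$ has both endpoints $\le b$ or both $\ge b$. Now split on the number of arcs $(m,b)$. If there are infinitely many, $b$ is itself a left-fountain and we are done. If there are finitely many, let $m_{0}$ be the smallest left-endpoint of an arc ending at $b$ (or $m_{0}=b-1$ if there are none). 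Maximality of $\mathfrak{t}$ now does the work: for every $m<m_{0}$ the arc $(m,b)$ is not in $\mathfrak{t}$, so it must cross some arc, and using the no-straddling fact one checks the crossing arc is some $(a,c)$ with $a<m<c\le m_{0}<b$. Thus there is a family $\mathcal{F}$ of pairwise non-crossing arcs, all with right-endpoint $<b$, whose open intervals strictly contain (``straddle'') every sufficiently negative integer.

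The main obstacle, and the step I would treat most carefully, is extracting a left-fountain from $\mathcal{F}$. I would argue by contradiction: suppose no vertex $<b$ is a left-fountain, so each value occurs only finitely often as a right-endpoint among arcs of $\mathcal{F}$. Then $\mathcal{F}$ has no infinite chain under interval-inclusion, since such a chain would have non-decreasing, bounded, integer right-endpoints, hence an eventually constant right-endpoint shared by infinitely many arcs---a left-fountain. Consequently every arc lies inside an inclusion-maximal one; non-crossing forces the maximal arcs to be pairwise disjoint as open intervals, and as their right-endpoints are bounded by $m_{0}$ there is a rightmost maximal arc $(a_{1},c_{1})$, with covering forcing $c_{1}=m_{0}$. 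But then its left-endpoint $a_{1}<m_{0}$ is straddled by no arc of $\mathcal{F}$---a straddling arc would have to meet yet be disjoint from $(a_{1},c_{1})$---contradicting that $\mathcal{F}$ straddles every integer $<m_{0}$. Hence some $c^{*}<b$ is a left-fountain.

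Finally I would assemble the trichotomy. We now have both a left-fountain and a right-fountain; if some single vertex is simultaneously both, that vertex is a fountain (case (ii)), and otherwise the left- and right-fountains occur at distinct vertices, which is exactly a split fountain (case (iii)). A short crossing argument of the same flavour as the no-straddling observation shows, moreover, that there is at most one left-fountain and at most one right-fountain, and that the former never lies strictly to the right of the latter; this pins down the structure in cases (ii) and (iii) but is not needed for the trichotomy itself.
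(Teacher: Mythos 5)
The paper does not actually prove this lemma: it is imported from \cite[Lemma~3.3]{HJ12} and stated with an immediate \qed, so there is no in-paper argument to compare yours against. Judged on its own, your proof is correct and self-contained, and its skeleton is sound: reduce by reflection to a right-fountain at $b$; observe no arc of $\mathfrak{t}$ straddles $b$; if $b$ is not itself a left-fountain, use maximality of $\mathfrak{t}$ to produce, for each $m<m_0$, an arc $(a,c)\in\mathfrak{t}$ with $a<m<c\le m_0$; then, assuming no left-fountain, use boundedness of the right-endpoints to find a rightmost inclusion-maximal arc $(a_1,m_0)$ and derive a contradiction at $a_1$. All of these steps check out, including the auxiliary claims that $c\le m_0$ (an arc with $m_0<c<b$ would cross $(m_0,b)$, or is excluded outright when $m_0=b-1$) and that left-fountains at vertices $\ge b$ are already ruled out, so your contradiction hypothesis really is ``no left-fountain at all''.

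The one sentence I would tighten is the parenthetical justification ``a straddling arc would have to meet yet be disjoint from $(a_1,c_1)$''. Since $a_1$ is an \emph{endpoint} of $(a_1,c_1)$, disjointness of open intervals alone is consistent with a distinct arc containing $a_1$ in its interior, so as phrased this is not yet a contradiction. The immediate repair: an arc $(a,c)\in\mathcal{F}$ with $a<a_1<c\le m_0$ either satisfies $a<a_1<c<m_0$, hence crosses $(a_1,m_0)$, or has $c=m_0$ and properly contains $(a_1,m_0)$, contradicting its maximality; both options are impossible. (Equivalently: two non-crossing, non-nested arcs must have closed intervals overlapping in at most one point, which fails here.) With that line made precise the argument is complete. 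Your closing observations---uniqueness of the left- and right-fountain and the inequality $l<r$ in the split case---are also correct and consistent with the convention the paper uses later when it speaks of the frozen arc $(l,r)$.
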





To examine these different types, we use the following notions to talk about relationships between arcs within triangulations. 

\begin{defn}
Let $\mathfrak{t}$ be a triangulation of the $\infty$-gon. We say that an arc $(x,y) \in \mathfrak{t}$ {\em passes over} an arc or a side $(i,j)\in \mathfrak{t} \cup \{\text{sides of the $\infty$-gon}\}$ in the triangulation $\mathfrak{t}$ if $(x,y) \neq (i,j)$ and $x \leq i < j \leq y$. We say that it is a {\em minimal arc passing over} $(x,y)$ in the triangulation $\mathfrak{t}$  if every other arc $(x',y') \in \mathfrak{t}$ passing over $(i,j)$ also passes over $(x,y)$.

\end{defn}

The following technical result on the existence, uniqueness and form of minimal arcs is used repeatedly in the sequel.

\pagebreak

\begin{lm} \label{minimal arc}
Let $\mathfrak{t}$ be a triangulation of the $\infty$-gon that is locally finite or has a fountain. For any arc or side $(i,j) \in \mathfrak{t} \cup \{\text{sides of the $\infty$-gon}\}$ there exists a unique minimal arc passing over $(i,j)$ in $\mathfrak{t}$.  Furthermore we either have $x = i$ and we say that $(x,y)$ passes over $(i,j)$ {\em to the right}, or we have $y = j$ and we say that $(x,y)$ passes over $(i,j)$ {\em to the left}.
\end{lm}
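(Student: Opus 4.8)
The plan is to split the statement into three independent pieces: (i) the arcs of $\mathfrak{t}$ passing over $(i,j)$ are totally ordered, so that a \emph{unique least} such arc is what the first assertion really asks for; (ii) at least one arc passes over $(i,j)$ — the only point where the hypothesis (locally finite or with a fountain) is needed; and (iii) this least arc shares an endpoint with $(i,j)$. For (i) I would argue that if $(x_1,y_1),(x_2,y_2)\in\mathfrak{t}$ both pass over $(i,j)$, then $[x_1,y_1]$ and $[x_2,y_2]$ both contain the nonempty interval $[i,j]$, hence are either nested or crossing; since arcs of a triangulation cannot cross, they are nested, i.e. one passes over the other. Thus the set $P$ of arcs passing over $(i,j)$ is a chain under interval inclusion. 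Assigning to $(x,y)\in P$ the non-negative integer $(i-x)+(y-j)$, a strictly smaller interval gets a strictly smaller value, so if $P\neq\emptyset$ there is an element of minimum value; in a chain this element is contained in every other, i.e. it is passed over by every other arc of $P$, which is exactly the minimal arc. Uniqueness is then immediate from antisymmetry. So everything reduces to showing $P\neq\emptyset$.

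Part (ii) is the crux, and I would treat the two hypotheses separately. \emph{Locally finite case:} assume $P=\emptyset$ and consider, for $y>j$, the arc $(i,y)$, which passes over $(i,j)$ and so is not in $\mathfrak{t}$; by maximality it is crossed by some arc $(a,b)$. A short analysis (using $P=\emptyset$ to exclude $a<i<b$, and non-crossing with $(i,j)$ to exclude $i<a<j$) shows any such crosser must satisfy $j\le a<y<b$, a ``straddler'' of $y$ anchored weakly to the right of $j$. Taking $y=j+1$ forces an arc $(j,b)$ with $b\ge j+2$; local finiteness \emph{at the single vertex} $j$ then gives a largest such arc $(j,D)$. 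But now no arc can straddle $y=D$: an $a=j$ straddler would contradict maximality of $D$, and an $a>j$ one would cross $(j,D)$. Hence $(i,D)$ crosses nothing, so it is addable, contradicting maximality of $\mathfrak{t}$. \emph{Fountain case:} if $\mathfrak{t}$ has a fountain at $f$ and $f\le i$, then, $f$ being a right-fountain, I pick an arc $(f,p)$ with $p>j$, which passes over $(i,j)$; if $f\ge j$, I pick symmetrically an arc $(m,f)$ with $m<i$; the remaining possibility $i<f<j$ would force $(i,j)$ to be an arc and then any $(m,f)$ with $m<i$ crosses it, which is impossible, so it does not occur. In all cases $P\neq\emptyset$.

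For part (iii), let $(x,y)$ be the minimal arc. The finite region beneath $(x,y)$ is triangulated by the arcs of $\mathfrak{t}$ lying in $[x,y]$, so $(x,y)$ bounds a triangle with apex $k$, $x<k<y$, and edges $(x,k),(k,y)\in\mathfrak{t}\cup\{\text{sides}\}$. Suppose for contradiction that $x<i$ and $y>j$ both hold. If $k\le i$ then $(k,y)$ is a genuine arc (since $y-k\ge2$) passing over $(i,j)$ with $[k,y]\subsetneq[x,y]$, contradicting minimality; if $k\ge j$ then $(x,k)$ does the same; and if $i<k<j$ then $x<i<k<j$ shows $(x,k)$ crosses $(i,j)$, impossible. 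Hence $x=i$ or $y=j$, which are the cases named \emph{to the right} and \emph{to the left}.

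I expect the main obstacle to be part (ii) in the locally finite case: this is precisely where the hypothesis is essential (it fails for a split fountain, where a side at the split has nothing passing over it), so it cannot be handled by the soft nesting argument of part (i) and instead requires exploiting maximality of the triangulation together with local finiteness at the vertex $j$.
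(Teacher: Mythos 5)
Your proof is correct, and it shares the paper's overall decomposition (existence of some arc passing over $(i,j)$, extraction of a least one, then the endpoint property), with the fountain case handled identically; but the two substantive steps are carried out by genuinely different arguments. For existence in the locally finite case, the paper argues two-sidedly: assuming nothing passes over $(i,j)$, it takes the extremal arcs immediately to the left and to the right of $(i,j)$ and shows that the arc $(h_0,k_0)$ joining their outer endpoints crosses nothing, hence lies in $\mathfrak{t}$ and passes over $(i,j)$. You instead work one-sidedly from the vertex $j$: maximality forces $(i,j+1)$ to be crossed, which produces an arc $(j,b)$, and local finiteness \emph{at the single vertex} $j$ yields a largest $(j,D)$ for which $(i,D)$ crosses nothing --- a contradiction. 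This isolates exactly where local finiteness enters and is, if anything, slightly sharper than the paper's argument. For the endpoint property, the paper shows directly that if $x<i<j<y$ then $(i,y)$ crosses no arc of $\mathfrak{t}$ and so must belong to $\mathfrak{t}$, contradicting minimality; you instead pass to the triangle of the induced finite triangulation having $(x,y)$ as an edge and analyse its apex $k$. Both work; the paper's route is shorter but leaves the non-crossing verification for $(i,y)$ implicit, while yours relies on the (standard, but worth a sentence) fact that the arcs of $\mathfrak{t}$ beneath $(x,y)$ do triangulate the finite polygon on $\{x,\dots,y\}$. Finally, your explicit observation that the arcs passing over $(i,j)$ form a chain under nesting is what the paper tacitly uses when it deduces the existence of a minimal arc from the set being ``bounded below,'' and it gives a cleaner uniqueness argument than the paper's appeal to the endpoint dichotomy; making it explicit is a small gain in rigour.
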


\begin{center}
\begin{tikzpicture}[scale =.65]
\tikzstyle{every node}=[font=\small]
\draw[xshift = 11cm] (-5.5,0) -- (1.5,0);
\draw[xshift = 11cm] (-3,0.1) -- (-3,-0.1) node[below]{$i$};
\draw[xshift = 11cm] (1,0.1) -- (1,-0.1) node[below]{$j = y$};
\draw[xshift = 11cm] (-5,0.1) -- (-5,-0.1) node[below]{$x$};

\path[xshift = 11cm] (-3,0) edge [out= 60, in= 120] (1,0);
\path[xshift = 11cm] (-5,0) edge [out= 60, in= 120] (1,0);
\path[xshift = 11cm, dashed] (-5,0) edge [out= 60, in= 120] (-3,0);
\node (text) at (0,-2) {$(x,y)$ passes over $(i,j)$ to the right};

\draw (-3.5,0) -- (3.5,0);
\draw (-3,0.1) -- (-3,-0.1) node[below]{$i=x$};
\draw (1,0.1) -- (1,-0.1) node[below]{$j$};
\draw (3,0.1) -- (3,-0.1) node[below]{$y$};

\path (-3,0) edge [out= 60, in= 120] (1,0);
\path (-3,0) edge [out= 60, in= 120] (3,0);
\path[dashed] (1,0) edge [out= 60, in= 120] (3,0);

\node[xshift = 7cm] (text) at (0,-2) {$(x,y)$ passes over $(i,j)$ to the left};

\end{tikzpicture}
\end{center}

\begin{proof}

Let $\mathfrak{t}$ be a triangulation of the $\infty$-gon and $(i,j)$ any arc or side in $\mathfrak{t} \cup \{\text{sides of the $\infty$-gon}\}$. We show that there is always an arc in $\mathfrak{t}$ passing over $(i,j)$.

First, consider the case where $\mathfrak{t}$ has a fountain at $k$ with $k \leq i < j$. As there are infinitely many arcs of the form $(k,l)$ with $l \in \mathbb{Z}$ there exists a $m \in \mathbb{Z}$ with $k \leq i < j < m$ and hence $(k,m)$ passes over $(i,j)$. The case $i < j \leq k$ follows from reflection. 

On the other hand, consider the case where $\mathfrak{t}$ is a locally finite triangulation. Assume for a contradiction that there is no arc passing over $(i,j)$. As there are infinitely many arcs in $\mathfrak{t}$ and by assumption there is no arc passing over $(i,j)$, there must be arcs to the right and left of it, that is, arcs of the form $(h,i')$ with $i' \leq i$, respectively $(j',k)$ with $j \leq j'$.  Let $i_0 \in \mathbb{Z}$ with $i_0 \leq i$ be maximal with the property that there exists an arc $(h,i_0)$ in $\mathfrak{t}$ and let $h_0 \in \mathbb{Z}$ be the minimal such $h$.  Similarly let  $j_0 \in \mathbb{Z}$ with $j_0 \geq i$ be minimal with the property that there exists an arc $(j_0,k)$ in $\mathfrak{t}$ and let $k_0 \in \mathbb{Z}$ be the maximal such $k$. But then the arc $(h_0,k_0)$ does not intersect any arcs in $\mathfrak{t}$, hence it has to be in $\mathfrak{t}$ and thus is an arc passing over $(i,j)$ in $\mathfrak{t}$, which contradicts the assumption.  This is illustrated by the following diagram:

\begin{center}
\begin{tikzpicture}[scale =.45]
\tikzstyle{every node}=[font=\small]
\draw (-6.5,0) -- (6.5,0);
\draw (-3,0.1) -- (-3,-0.1) node[below]{$i$};
\draw (1,0.1) -- (1,-0.1) node[below]{$j$};
\draw (-6,0.1) -- (-6,-0.1) node[below]{$h_0$};
\draw (6,0.1) -- (6,-0.1) node[below]{$k_0$};
\draw (-4,0.1) -- (-4,-0.1) node[below]{$i_0$};
\draw (3,0.1) -- (3,-0.1) node[below]{$j_0$};

\path[dashed] (-6,0) edge [out= 60, in= 120] (6,0);
\path (-3,0) edge [out= 60, in= 120] (1,0);
\path (3,0) edge [out= 60, in= 120] (6,0);
\path (-6,0) edge [out= 60, in= 120] (-4,0);
\end{tikzpicture}
\end{center}

So if $\mathfrak{t}$ is a locally finite triangulation or a triangulation with a fountain, there are always arcs passing over any arc $(i,j)$ in $\mathfrak{t}$. As the set of arcs passing over $(i,j)$ is bounded below (in the sense that the distance between the end-points  of any such arc cannot be smaller than $|i-j|$) there exists a minimal arc passing over $(i,j)$. 

Let $(x,y)$ be a minimal arc passing over $(i,j)$. Assume for a contradiction that $x < i < j < y$, then by minimality of $(x,y)$ the arc $(i,y)$ does not intersect any of the arcs in the triangulation. Hence it has to be in the triangulation itself, which contradicts the minimality of $(x,y)$:

\begin{center}
\begin{tikzpicture}[scale =.45]
\tikzstyle{every node}=[font=\small]
\draw (-5.5,0) -- (6.5,0);
\draw (-3,0.1) -- (-3,-0.1) node[below]{$i$};
\draw (1,0.1) -- (1,-0.1) node[below]{$j$};
\draw (-5,0.1) -- (-5,-0.1) node[below]{$x$};
\draw (6,0.1) -- (6,-0.1) node[below]{$y$};

\path (-5,0) edge [out= 60, in= 120] (6,0);
\path (-3,0) edge [out= 60, in= 120] (1,0);
\path[dashed] (-3,0) edge [out= 60, in= 120] (6,0);
\end{tikzpicture}
\end{center}
Hence either we have $x = i$ or $y = j$. From this, we directly see the uniqueness of the minimal arc passing over $(i,j)$.
\end{proof}

\begin{rmk}
Note that in a triangulation with a split fountain, Lemma \ref{minimal arc} does not hold. Let $\mathfrak{t}$ be such a triangulation and let $l$ be a left-fountain and $r$ a right-fountain with $l \neq r$ in $\mathfrak{t}$. Then the arc $(l,r)$ which lies in $\mathfrak{t}$ has no arc passing over it. However, by the same arguments as in the proof of Lemma \ref{minimal arc} every arc in $\mathfrak{t}$ which is not $(l,r)$ does have a minimal arc passing over it. So, with careful treatment of the arc $(l,r)$ we can still use the concepts of minimal arcs passing over each other in the split fountain case.

We note that in a triangulation with a split fountain as above, the arc $(l,r)$ is not mutable.  That is, there is no possible diagonal flip of this arc.  Consequently, in a triangulation containing a split fountain, we may regard the arc $(l,r)$ as frozen.
\end{rmk}

Next we study connected components of the exchange quivers arising from triangulations.

\begin{lm}\label{vertices connected}
Let $\mathfrak{t}$ be a triangulation of the $\infty$-gon and $Q_{\mathfrak{t}}$ the associated exchange quiver. Two vertices $(i,j)$ and $(m,n)$ in the quiver $Q_{\mathfrak{t}}$ are in the same connected component of $Q_{\mathfrak{t}}$ if and only if there is an arc in $\mathfrak{t}$ passing simultaneously over $(i,j)$ and $(m,n)$.
\end{lm}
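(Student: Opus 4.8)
The plan is to reduce everything to the triangle structure of the triangulation. By the construction in Definition~\ref{exchange quiver}, two vertices of $Q_{\mathfrak{t}}$ are joined by an arrow precisely when the corresponding arcs or sides are two distinct edges of a common triangle of $\mathfrak{t}$ and are not both frozen (arrows between frozen vertices being discarded). Thus lying in the same connected component is the transitive closure of the relation ``shares a triangle''. I also record one elementary fact: for a triangle with vertices $p<q<r$ the longest edge $(p,r)$ has $r-p\geq 2$, so it is an arc of $\mathfrak{t}$ (not a side), and it passes over each of the two shorter edges $(p,q)$ and $(q,r)$.

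For the ``if'' direction it suffices to prove the sub-claim: if $(x,y)\in\mathfrak{t}$ passes over $(i,j)$, then $(i,j)$ and $(x,y)$ lie in the same component of $Q_{\mathfrak{t}}$. Granting this, a common over-arc for $(i,j)$ and $(m,n)$ connects each of them to itself, hence to one another. I would prove the sub-claim by induction on $y-x$. The arc $(x,y)$ is the top edge of a unique triangle $\{x<z<y\}$ inside the finite polygon on the vertices $x,\dots,y$; since $(x,y)$ is mutable it is adjacent in $Q_{\mathfrak{t}}$ to both lower edges $(x,z)$ and $(z,y)$. Now any arc or side $(i,j)$ passed over by $(x,y)$ and different from $(x,z),(z,y)$ cannot satisfy $i<z<j$, since it would then cross $(x,z)$ or $(z,y)$, which is impossible in a triangulation; hence $(i,j)$ lies under $(x,z)$ or under $(z,y)$, and the induction hypothesis applied to the relevant strictly shorter arc finishes the step, the base case $y-x=2$ being immediate.

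For the ``only if'' direction, take a path $v_0,\dots,v_k$ in $Q_{\mathfrak{t}}$ with $v_0=(i,j)$ and $v_k=(m,n)$, and for each step let $a_t\in\mathfrak{t}$ be the longest edge of the triangle shared by $v_t$ and $v_{t+1}$, so that $v_t,v_{t+1}\subseteq a_t$ as intervals. Because the arcs of $\mathfrak{t}$ are pairwise non-crossing, the finite collection $\{v_t\}\cup\{a_t\}$ is laminar, and so under interval containment it forms a forest. The zigzag $v_0\subseteq a_0\supseteq v_1\subseteq\cdots\supseteq v_k$ places $v_0$ and $v_k$ in the same tree, where they have a least common ancestor $A$ in the family with $A\supseteq v_0$ and $A\supseteq v_k$. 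If $A$ contains both strictly then $A$ has length at least $2$, hence is an arc of $\mathfrak{t}$, and it is the desired common over-arc.

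The main obstacle is the boundary case where $A$ equals an endpoint, say $A=v_0=(i,j)$, so that $(i,j)\supseteq(m,n)$ and I must still produce an arc strictly over $(i,j)$. If $v_0\subsetneq a_0$ this is automatic, since then $a_0\supsetneq v_0\supseteq v_k$ works; the only stuck sub-case is $v_0=a_0$, i.e.\ $(i,j)$ is itself the longest edge of its triangle. Here I would invoke Lemma~\ref{minimal arc}: when $\mathfrak{t}$ is locally finite or has a fountain there is always an arc passing over $(i,j)$, and it then passes over both $(i,j)$ and $(m,n)$. The genuinely exceptional object is the split-fountain arc $(l,r)$, over which nothing passes; by the Remark following Lemma~\ref{minimal arc} this arc is frozen and is excluded from the mutable combinatorics, so the statement is read with $(l,r)$ treated separately and the laminar argument applied to each of the two fountains. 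I expect this endpoint/split-fountain case to be the only real subtlety; the induction and the laminar-forest bookkeeping are otherwise routine.
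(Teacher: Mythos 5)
Your proof is correct, but it takes a genuinely different route from the paper's in both directions. For the ``if'' direction the paper does not induct: given an arc $(x,y)$ over both $(i,j)$ and $(m,n)$, it views everything as a triangulation of the finite polygon on the vertices $x,\dots,y$ and simply cites the finite type $A$ case for connectedness of that polygon's exchange quiver; your induction on $y-x$ through the unique triangle having $(x,y)$ as its long edge proves the same statement from scratch, buying self-containedness at the cost of a little length. The difference in the ``only if'' direction is more substantial: the paper argues by contradiction, taking a path of arcs in which consecutive members share a triangle and then asserting, with essentially no further justification, that some arc of the path must pass over both endpoints. Your laminar-forest/least-common-ancestor argument is precisely the constructive version of that missing step --- it actually exhibits the spanning arc --- and is the more convincing of the two. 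Two small points are worth recording. First, arcs of $\mathfrak{t}$ may share an endpoint, so $\{v_t\}\cup\{a_t\}$ is not literally laminar as a family of closed intervals; the forest structure survives because any two members containing a common member $w$ (an interval of length at least $1$) overlap in more than a point and hence, being non-crossing, are nested --- add that sentence. Second, your treatment of the boundary case is accurate and in fact more careful than the paper's: when one of the two arcs passes over the other, the paper's proof tacitly treats the statement as already satisfied, whereas a literal reading demands a third arc over both; your appeal to Lemma~\ref{minimal arc} supplies it in the locally finite and fountain cases, and the split-fountain arc $(l,r)$ is indeed the one genuine exception, consistent with the paper's own remark that this arc must be regarded as frozen.
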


\begin{proof}

Suppose first there is an arc $(x,y)$ passing over both $(i,j)$ and $(m,n)$. Then we can view $(i,j)$ and $(m,n)$ locally as diagonals in the $(x-y+1)$-gon with vertices $\{x, x+1, \ldots, y-1,y\}$. From the finite case we know that the exchange graph for this finite-gon is mutation equivalent to a finite $A$-type quiver (c.f.\;\cite{FZ2}), in particular it is connected. The algorithm for the exchange quiver $Q_{\mathfrak{t}}$ yields this exchange quiver as a subquiver containing both $(i,j)$ and $(m,n)$. Hence they both lie in the same connected component of $Q_{\mathfrak{t}}$.

On the other hand, assume that there is no arc passing over both $(i,j)$ and $(m,n)$. As both arcs cannot pass over each other by assumption, they have to lie next to each other, so let $i < j \leq m < n$. Suppose as a contradiction that the vertices corresponding $(i,j)$ and $(m,n)$ in the exchange quiver are connected. Then by construction of $Q_{\mathfrak{t}}$there is a sequence of arcs
$$\{(a_k,b_k)\}_{k=0}^l,$$ such that $$(i,j) = (a_0,b_0), \;(m,n) = (a_l,b_l)$$ and for every $a \leq k \leq l$, $(a_k,b_k)$  and $ (a_{k-1},b_{k-1})$ are the sides of a common triangle, namely $$(a_{k-1},a_k, b_k)\text{ or }(a_k,b_k,b_{k-1}).$$
But then one of the arcs $(a_k,b_k)$ has to pass both over $(i,j)$ and $(m,n)$ which contradicts the assumption.
\end{proof}

In particular, Lemma \ref{vertices connected} allows us to obtain the number of connected components for the respective triangulations, as follows.

\begin{prop}\label{number of components} {\ }
\begin{itemize}
\item[i)]{The exchange quiver of a locally finite triangulation is connected.}
\item[ii)]{The exchange quiver of a triangulation with a fountain consists of two infinite connected components.}
\item[iii)]{The exchange quiver of a triangulation with a split fountain consists of three connected components, of which two are infinite and the third is finite.}
\end{itemize}
\end{prop}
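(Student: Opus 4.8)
The whole proposition is bookkeeping on top of Lemma~\ref{vertices connected}: two vertices lie in the same component of $Q_\mathfrak{t}$ precisely when some arc of $\mathfrak{t}$ passes over both. So in each of the three cases the plan is identical. I would partition the arcs and sides of $\mathfrak{t}$ into classes, show that any two members of one class admit a common arc passing over both (so each class sits in a single component), show that members of different classes never admit such a common arc (so distinct classes give distinct components), and finally count the classes and decide which are infinite.

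For (i), the locally finite case, I would first manufacture arbitrarily large arcs. Starting from any arc $(i,j)$, I repeatedly invoke Lemma~\ref{minimal arc} to build a chain $(i,j) = (x_0,y_0), (x_1,y_1), \dots$ in which $(x_{k+1},y_{k+1})$ is the minimal arc passing over $(x_k,y_k)$; by that lemma each step fixes one endpoint and pushes the other strictly outward. Local finiteness forbids either endpoint from recurring infinitely often (only finitely many arcs meet a given integer), so $x_k \to -\infty$ and $y_k \to +\infty$. Hence for any second arc $(m,n)$ some term $(x_k,y_k)$ has $x_k \le \min(i,m)$ and $y_k \ge \max(j,n)$ and thus passes over both $(i,j)$ and $(m,n)$; by Lemma~\ref{vertices connected} all vertices lie in one component, giving connectedness.

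For (ii) and (iii) the key preliminary observation is that a fountain blocks straddling arcs: if $k$ is a fountain, an arc $(x,y)\in\mathfrak{t}$ with $x<k<y$ would cross a right-fountain arc $(k,l)$ with $l>y$, so no arc straddles $k$, and every arc or side satisfies $x \ge k$ (a \emph{right} object) or $y \le k$ (a \emph{left} object). A right-fountain arc $(k,l)$ with $l$ large passes over any two right objects, and symmetrically on the left, so each class is connected; an arc passing over both a left and a right object would itself have to straddle $k$, which is impossible, so the classes are separate, and both are infinite because the fountain supplies infinitely many arcs on each side. For (iii) I would first note that the left-fountain $l$ and right-fountain $r$ are unique and satisfy $l<r$ (otherwise the two fans cross), then run the straddling argument at $l$ and at $r$ simultaneously. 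This partitions everything into $L$ ($y\le l$), $R$ ($x\ge r$) and $M$ (those with $l \le x < y \le r$); $L$ and $R$ are connected and infinite exactly as in (ii), and cross-class disconnection again follows by straddling $l$ or $r$. The set $M$ is precisely the diagonals and boundary edges of the finite polygon on vertices $\{l,l+1,\dots,r\}$ carrying the induced triangulation, whose exchange quiver is a connected finite type $A$ quiver, so $M$ is a single finite component.

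The main obstacle is the arc $(l,r)$ in case (iii), which is exactly the arc for which Lemma~\ref{minimal arc} fails, as recorded in the Remark after that lemma: nothing passes over $(l,r)$, so Lemma~\ref{vertices connected} cannot be used to attach it to anything. I would deal with it by examining the triangles it bounds directly. Any outer triangle on $(l,r)$ would have an edge $(a,r)$ with $a<l$ or $(l,b)$ with $b>r$, each of which straddles $l$ or $r$ and hence cannot occur; so $(l,r)$ bounds no outer triangle and exactly one inner triangle $(l,w,r)$ with $l<w<r$, whence its only arrows in $Q_\mathfrak{t}$ run to $(l,w),(w,r)\in M$, placing it in the middle component and in neither $L$ nor $R$. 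In the degenerate case $r=l+1$ the ``arc'' $(l,r)$ is the frozen side $(l,l+1)$, bounds no triangle at all, and is a one-vertex finite component on its own; either way $M$ is a single finite component, completing the count of three components, two infinite and one finite.
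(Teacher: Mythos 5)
Your proof is correct, and its skeleton is the same as the paper's: everything is funnelled through Lemma~\ref{vertices connected}, connectedness of each class is witnessed by a common over-passing arc, and disconnection at a fountain follows from the fact that no arc of $\mathfrak{t}$ can straddle a (left- or right-) fountain. The one genuinely different ingredient is your treatment of part~(i): you iterate Lemma~\ref{minimal arc} to build a chain of minimal over-passing arcs whose endpoints must diverge to $\mp\infty$ by local finiteness (a non-increasing integer sequence that does not tend to $-\infty$ stabilises, which would create a fountain), whereas the paper instead directly produces a single arc $(u,y)$ from two ``maximal'' arcs flanking the gap between $(i,j)$ and $(m,n)$ and argues it must lie in $\mathfrak{t}$ by maximality of non-crossing sets. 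Your version has the advantage of reusing machinery already established and is in fact the same divergence argument the paper deploys later in the proof of Theorem~\ref{classif-of-exch-quivers}; the paper's version avoids taking a limit. You also make explicit two points the paper's proof leaves implicit in case~(iii): that the middle class $M$ is itself connected (via the finite-polygon case), and where the non-mutable arc $(l,r)$ --- over which nothing passes, so Lemma~\ref{vertices connected} cannot be applied to it naively --- actually sits; your direct inspection of the unique inner triangle on $(l,r)$, and the degenerate case $r=l+1$, settles this cleanly and is a worthwhile addition.
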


\pagebreak

\begin{proof} {\ }
\begin{itemize}
\item[i)]{
Take any two arcs (or sides) $(i,j)$ and $(m,n)$ in a locally finite triangulation $\mathfrak{t}$. If either $(i,j)$ passes over $(m,n)$ or vice versa, then the corresponding vertices in the quiver are connected, by Lemma \ref{vertices connected}. Otherwise, let $i < j \leq m < n$. Then there is a maximal arc $(x,y)$ in $\mathfrak{t}$ with $j \leq x \leq m$ and $n \leq y$. Equally, there is a maximal arc $(u,v)$ in $\mathfrak{t}$ with $u \leq i$ and $j \leq v \leq m$. The arc $(u,y)$ passes over both $(i,j)$ and $(m,n)$ and since it does not intersect any of the arcs in $\mathfrak{t}$, it has to be in the triangulation $\mathfrak{t}$.

\begin{center}
\begin{tikzpicture}[scale =.75]
\tikzstyle{every node}=[font=\small]
\draw (-5.5,0) -- (8.5,0);
\draw (-3,0.1) -- (-3,-0.1) node[below]{$i$};
\draw (1,0.1) -- (1,-0.1) node[below]{$j$};
\draw (5,0.1) -- (5,-0.1) node[below]{$m$};
\draw (7,0.1) -- (7,-0.1) node[below]{$n$};
\draw (8,0.1) -- (8,-0.1) node[below]{$y$};
\draw (-5,0.1) -- (-5,-0.1) node[below]{$u$};
\draw (2,0.1) -- (2,-0.1) node[below]{$v$};
\draw (4,0.1) -- (4,-0.1) node[below]{$x$};

\path (-5,0) edge [out= 60, in= 120] (8,0);
\path (5,0) edge [out= 60, in= 120] (7,0);
\path (-3,0) edge [out= 60, in= 120] (1,0);
\path (4,0) edge [out= 60, in= 120] (8,0);
\path (-5,0) edge [out= 60, in= 120] (2,0);
\end{tikzpicture}
\end{center}
By Lemma \ref{vertices connected}, the vertices $(i,j)$ and $(m,n)$ lie in the same connected component of $Q_{\mathfrak{t}}$ and as they were chosen arbitrarily, the exchange quiver is connected.
}
\item[ii)/iii)]{
Let $\mathfrak{t}$ be a triangulation with a right-fountain $k$, let $(i,j)$ and $(m,n)$ be two arcs in $\mathfrak{t}$ with $i,j,m,n \geq k$. As $k$ is a right-fountain, there is an integer $l$ with $l \geq j$, $l \geq n$ such that $(k,l)$ is an arc in $\mathfrak{t}$ and $(k,l)$ passes over both $(i,j)$ and $(m,n)$.
Hence all vertices corresponding to arcs on the right-hand side of a fountain are connected. By reflection, all vertices corresponding to arcs on the left-hand side of a left-fountain are connected. This shows that the quiver corresponding to a triangulation with a fountain, respectively a split fountain, has at most $2$, respectively $3$, connected components.

Let $k$ be a right- or left-fountain in a triangulation $\mathfrak{t}$. Then there can be no arc $(i,j)$ in $\mathfrak{t}$ with $i < k < j$, as it would intersect with infinitely many arcs incident with $k$ (of the form $(k,l)$ with $l > j$ if $k$ is a right-fountain, or $(m,k)$ with $m < i$ if $l$ is a left-fountain). This means that there are two infinite connected components in an exchange quiver corresponding to a triangulation with fountain, one for the left-hand side and one for the right-hand side of the fountain. In the split fountain case with a left-fountain $l$ and a right-fountain $r$ we get an additional finite component for the arcs between the two fountains, i.e.\ all arcs $(i,j)$ with $l \leq i < j \leq r$.
} \qedhere
\end{itemize}
\end{proof}

\vspace{1em}
We can be even more precise about the forms of these exchange quivers for the different triangulations, our first main result.

\pagebreak

\begin{theorem}\label{classif-of-exch-quivers}
For the three types of triangulations of the $\infty$-gon, we get the following types of exchange quivers.
\begin{itemize}
\item[(1)]{A locally finite triangulation $\mathfrak{t}$ gives rise to an exchange quiver $Q_{\mathfrak{t}}$ of the following form:
\begin{itemize}
\item[i)]{The quiver $Q_{\mathfrak{t}}$ is connected.}
\item[ii)]{It has a subquiver of type $A_{+\infty}$ and each such subquiver is not linearly oriented.}
\item[iii)]{There are copies of quivers of finite $A$-mutation type (i.e.\ mutation-equivalent to a quiver of type $A_n$ for some finite $n$) glued to the subquiver of type $A_{+\infty}$ via oriented $3$-cycles.}
\end{itemize}
}
\item[(2)]{A triangulation $\mathfrak{t}$ with a fountain gives rise to an exchange quiver $Q_{\mathfrak{t}}$ of the following form:
\begin{itemize}
\item[i)]{The quiver $Q_{\mathfrak{t}}$ has two connected components.}
\item[ii)]{Each connected component has a subquiver of type $A_{+\infty}$ with a linear orientation.}
\item[iii)]{There are copies of quivers of finite $A$-mutation type glued to the subquivers of type $A_{+\infty}$ via oriented $3$-cycles.}
\end{itemize}
}
\item[(3)]{A triangulation $\mathfrak{t}$ with a split fountain gives rise to an exchange quiver $Q_{\mathfrak{t}}$ of the following form:
\begin{itemize}
\item[i)]{The quiver $Q_{\mathfrak{t}}$ has three connected components, two of them infinite and one finite.}
\item[ii)]{Each infinite connected component has a subquiver of type $A_{+\infty}$ with a linear orientation. The finite component is of finite $A$-mutation type.}
\item[iii)]{There are copies of quivers of finite $A$-mutation type glued to the subquivers of type $A_{+\infty}$ via oriented $3$-cycles.}
\end{itemize}
}
\end{itemize}
\end{theorem}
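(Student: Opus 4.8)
The plan is to treat the three cases in parallel, since each exchange quiver will decompose as an ``$A_{+\infty}$ backbone'' carrying finite pieces, and the three classifying features (number of components, shape and orientation of the backbone, nature of the attached pieces) are governed by the same two tools: the minimal-arc relation of Lemma~\ref{minimal arc} and the triangle-to-$3$-cycle rule of Definition~\ref{exchange quiver}. Part~(i) of each case is already in hand from Proposition~\ref{number of components}, which gives one, two and three connected components respectively. In the split-fountain case the finite third component consists of the arcs $(i,j)$ with $l \le i < j \le r$ between the left-fountain $l$ and right-fountain $r$; these are exactly the diagonals of the finite polygon on vertices $l,l+1,\ldots,r$, so by the finite type $A$ model of \cite{FZ2} its exchange quiver is of finite $A$-mutation type, which is the last clause of (3)(ii).

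Next I would construct the backbone and compute its orientation. Starting from a side $\gamma_0=(n,n+1)$, iterate Lemma~\ref{minimal arc}: let $\gamma_{t+1}$ be the unique minimal arc passing over $\gamma_t$. Each consecutive pair forms two sides of a common triangle, so Definition~\ref{exchange quiver} produces an oriented $3$-cycle on them together with the third side; keeping only the arrows between $\gamma_t$ and $\gamma_{t+1}$ gives a subquiver whose underlying graph is $A_{+\infty}$. A short check with the orientation rule shows that if $\gamma_{t+1}$ passes over $\gamma_t$ \emph{to the right} (so they share the left endpoint) then the arrow runs $\gamma_t \to \gamma_{t+1}$, while if it passes \emph{to the left} the arrow runs $\gamma_{t+1}\to\gamma_t$. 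In the fountain and split-fountain cases the backbone of an infinite component is the fan of arcs $(k,l)$ emanating from the fountain $k$; every step then passes the same way, so all arrows point outward along the fan and the orientation is \emph{linear}, yielding (2)(ii) and (3)(ii). In the locally finite case no integer can be the endpoint of infinitely many arcs of $\mathfrak{t}$ (that would be a fountain), so the chain cannot keep passing to the same side; the direction must switch infinitely often, forcing sources and sinks into the backbone and showing it is \emph{not} linearly oriented.

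The stronger assertion in (1)(ii), that \emph{every} $A_{+\infty}$ subquiver is non-linear, I would recast as: $Q_{\mathfrak{t}}$ contains no infinite directed path, since a linearly oriented $A_{+\infty}$ subquiver is exactly such a path $P_0 \to P_1 \to \cdots$. Consecutive arcs share a triangle and hence an endpoint, while local finiteness forces each integer to occur as an endpoint of only finitely many arcs. Tracking the endpoints through the three possible arrow types of Definition~\ref{exchange quiver}, one sees that sustaining a consistently oriented infinite path would require an infinite family of arcs through a single vertex, i.e.\ a fountain: a \emph{finite} fan at a vertex always forces the minimal arc over its outermost arc to pass the other way, reversing the orientation. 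This contradiction with local finiteness is the crux of the whole argument, and the step I expect to require the most care, as it is where the combinatorial geometry (crossing constraints preventing an endpoint from being moved across a fan) must be turned into the clean statement that a consistently oriented path produces a fountain.

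Finally, for part~(iii) I would account for the arcs not lying on a backbone. Between two consecutive backbone arcs $\gamma_t=(x,y)$ and $\gamma_{t+1}=(x,y')$ (say passing to the right, $y<y'$), the remaining arcs of $\mathfrak{t}$ are precisely those over the segment from $y$ to $y'$; these triangulate the finite polygon on vertices $y,\ldots,y'$, so by \cite{FZ2} they contribute a quiver of finite $A$-mutation type. By Lemma~\ref{vertices connected} this finite piece meets the backbone only through the base arc $(y,y')$, and the triangle $(x,y,y')$ supplies, via Definition~\ref{exchange quiver}, the oriented $3$-cycle $\gamma_t \to \gamma_{t+1} \to (y,y') \to \gamma_t$ that glues it on (when $y'=y+1$ the base is a frozen side and the piece is empty). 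Assembling the backbone, its computed orientation, and these glued finite pieces then yields all of (ii) and (iii) in each of the three cases.
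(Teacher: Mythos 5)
Your overall architecture coincides with the paper's: the component counts are quoted from Proposition~\ref{number of components}, the backbone is built by iterating Lemma~\ref{minimal arc}, the orientation along the backbone is read off from whether the minimal arc passes over to the left or to the right (your computation of which way the arrow points is correct), linearity along a fountain fan versus forced alternation in the locally finite case is argued the same way, and the arcs in the gaps are treated as triangulations of finite polygons glued on through the triangle $(x,y,y')$, giving the finite $A$-mutation-type pieces attached by oriented $3$-cycles. Starting the backbone at a side rather than at an arbitrary arc (or at the ``smallest fountain arcs'', as the paper does in cases (2) and (3)) is an immaterial variation.

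The one genuine problem is your recasting of (1)(ii) as the assertion that $Q_{\mathfrak{t}}$ contains no infinite directed path. As stated this is false: in the paper's own Figure~\ref{fig:standard leapfrog} for the leapfrog triangulation, the walk $(-1,2)\to(-2,-1)\to(-2,2)\to(-2,3)\to(-3,-2)\to(-3,3)\to(-3,4)\to(-4,-3)\to(-4,4)\to\cdots$ is an infinite directed path on distinct vertices, obtained by threading through the frozen side vertices; so a linearly oriented $A_{+\infty}$ subquiver of the ice quiver does exist even for a locally finite triangulation. The claim can only be salvaged by restricting attention to the principal part, or --- as the paper in fact does --- by proving non-linearity only for the backbone actually constructed, where a consistently one-sided sequence of minimal arcs pins an endpoint and produces a fountain; that is exactly the argument you also give, and it is complete. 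If you insist on the stronger ``no infinite directed path among mutable vertices'' version, the endpoint-tracking step you yourself flag as the crux still needs an actual monotone invariant, since a directed path may wander through the glued finite pieces and the $3$-cycles before escaping to infinity, and your sketch does not exhibit one. I would either adopt the paper's weaker reading, for which your backbone argument already suffices, or state explicitly that frozen vertices are excluded and then supply the missing combinatorial argument.
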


\begin{proof}

Consider first a locally finite triangulation $\mathfrak{t}$. Starting with any arc $(i_0,j_0)$ in $\mathfrak{t}$, we get a sequence of arcs $\{(i_k,j_k)\}_{k \in \mathbb{Z}}$ such that $(i_k,j_k)$ is the minimal arc passing over $(i_{k-1},j_{k-1})$ for all $k \geq 1$.
\begin{center}
\begin{tikzpicture}[scale =.75]
\tikzstyle{every node}=[font=\tiny]
\draw (-9.5,0) -- (5.5,0);
\draw (-3,0.1) -- (-3,-0.1) node[below]{$i_0$};
\draw (1,0.1) -- (1,-0.1) node[below]{$j_0 = j_1$};
\draw (-5,0.1) -- (-5,-0.1) node[below]{$i_1 = i_2$};
\draw (3,0.1) -- (3,-0.1) node[below]{$j_1 = j_3$};
\draw (-7,0.1) -- (-7,-0.1) node[below]{$i_3 = i_4 = i_5$};
\draw (4,0.1) -- (4,-0.1) node[below]{$j_4$};
\draw (5,0.1) -- (5,-0.1) node[below]{$j_5 = j_6$};
\draw (-9,0.1) -- (-9,-0.1) node[below]{$i_6$};

\path (-3,0) edge [out= 60, in= 120] (1,0);
\path (-5,0) edge [out= 60, in= 120] (1,0);
\path (-5,0) edge [out= 60, in= 120] (3,0);
\path (-7,0) edge [out= 60, in= 120] (3,0);
\path (-7,0) edge [out= 60, in= 120] (4,0);
\path (-7,0) edge [out= 60, in= 120] (5,0);
\path (-9,0) edge [out= 60, in= 120] (5,0);
\path[dashed] (1,0) edge [out= 60, in= 120] (3,0);
\path[dashed] (-5,0) edge [out= 60, in= 120] (-3,0);
\path[dashed] (-7,0) edge [out= 60, in= 120] (-5,0);
\path[dashed] (-9,0) edge [out= 60, in= 120] (-7,0);
\path[dashed] (4,0) edge [out= 60, in= 120] (5,0);
\path[dashed] (3,0) edge [out= 60, in= 120] (4,0);
\end{tikzpicture}
\end{center}
This yields a subquiver with underlying graph 
\begin{center}
\begin{tikzpicture}[scale = .75, transform shape]
\tikzstyle{every node}=[font=\small]
\node (1) at (0,0) {$(i_1,j_1)$};
\node (2) at (3,0) {$(i_2,j_2)$};
\node (3) at (6,0) {$(i_3,j_3)$};

\node (4) at (3,-3) {$\bullet$};
\node (5) at (6,-3) {$\bullet$};

\node (6) at (9,0) {\ldots};

\node (7) at (12,0) {$(i_k,j_k)$};
\node (8) at (15,0) {$\bullet$};
\node (9) at (15,-3) {$\bullet$};

\node (10) at (18,0) {$\ldots$};

\draw (1) -- (2);
\draw (3) -- (2);
\draw (4) -- (2);
\draw (1) -- (4);

\draw (5) -- (2);
\draw (3) -- (5);
\draw (3) -- (6);

\draw (6) -- (7);
\draw (8) -- (7);
\draw (9) -- (7);
\draw (8) -- (9);

\draw (8) -- (10);
\end{tikzpicture}
\end{center}
where each $3$-cycle containing the vertices $(i_{k-1},j_{k-1})$ and $(i_k,j_k)$ in the graph has a clockwise orientation whenever $(i_k,j_k)$ passes over $(i_{k-1},j_{k-1})$ to the right, and an anti-clockwise orientation whenever $(i_k, j_k)$ passes over $(i_{k-1},j_{k-1})$ to the left. Note that we cannot have a linear orientation on the subquiver of type $A_{+\infty}$ generated by the sequence $\{(i_k,j_k)\}_{k \in \mathbb{Z}}$ --- else there would exist a $k$ such that $(i_{k'},j_{k'})$ passes over  $(i_{k'-1},j_{k'-1})$ to the right (respectively to the left) for all $k'>k$ and hence $i_{k'} = i_k$ (respectively $j_{k'} = j_k$) for all $k>k'$ and we would have a right- (respectively a left-) fountain.

Furthermore, we have to consider the arcs and sides $(m,n)$ in the gaps, i.e.\ the arcs $(m,n)$ with $i_k \leq m < n \leq i_{k-1}$, respectively $j_{k-1} \leq m <n \leq j_k$. Consider all the arcs in such a gap. We assume  $(i_k,j_k)$ passes over $(i_{k-1},j_{k-1})$ to the right; analogous considerations apply to the case where it passes over $(i_{k-1},j_{k-1})$ to the left. Then the arc $(j_{k-1},j_k)$ passes over all other arcs in the gap. 
\begin{center}
\begin{tikzpicture}[scale =.75]
\tikzstyle{every node}=[font=\small]
\draw (-3.5,0) -- (6.5,0);
\draw (-3,0.1) -- (-3,-0.1) node[below]{$i_k = i_{k-1}$};
\draw (1,0.1) -- (1,-0.1) node[below]{$j_{k-1}$};
\draw (6,0.1) -- (6,-0.1) node[below]{$j_k$};

\path (-3,0) edge [out= 60, in= 120] (1,0);
\path (-3,0) edge [out= 60, in= 120] (6,0);
\path (1,0) edge [out= 60, in= 120] (6,0);
\path[dashed] (3,0) edge [out= 60, in= 120] (6,0);
\path[dashed] (1,0) edge [out= 60, in= 120] (3,0);
\path[dashed] (4,0) edge [out= 60, in= 120] (6,0);
\end{tikzpicture}
\end{center}
We can view the set of arcs in the gap as a triangulation of a $(|j_{k-1}-j_k|+1)$-gon, where $(j_{k-1},j_k)$ plays the role of an additional side. Hence the subquiver $Q_k$ corresponding to this subset of the triangulation is mutation-equivalent to a quiver of type $A_{(|j_{k-1}-j_k|-2)}$ with frozen vertices corresponding to the sides $(j_{k-1},j_{k-1}+1),\; \ldots,\;(j_k-1,j_k)$. In place of the last ``side'' $(j_{k-1},j_k)$ of the $(|j_{k-1}-j_k|-2)$-gon we get the mutable vertex $(j_{k-1},j_k)$ which is connected to the type $A_{+\infty}$ subquiver:
\begin{center}
\begin{tikzpicture}
\tikzstyle{every node}=[font=\small]
\node (a) at (0,2) {$(j_{k-1},j_k)$};
\node (b) at (3,2) {$\bullet$};
\node (c) at (6,2) {$\bullet$};
\node (d) at (9,2) {$\bullet$};
\node (e) at (12,2) {$\ldots$};

\node[draw, shape=rectangle] (f) at (3,0) {$(j_{k-1},j_{k-1}+1)$};
\node[draw, shape=rectangle] (g) at (6,0) {$(j_k-1,j_k)$};
\node[draw, shape=rectangle] (h) at (9,0) {$(j_{k-1}+1,j_{k-1}+2)$};

\draw[->, dotted] (a) -- (b);
\draw[->, dotted] (f) -- (a);
\draw[->] (c) -- (b);
\draw[->] (b) -- (f);
\draw[->] (g) -- (c);
\draw[->] (g) -- (b);
\draw[->] (c) -- (d);
\draw[->] (h) -- (c);
\draw[->] (e) -- (d);
\draw[->] (d) -- (h);

\draw [decorate,decoration={brace,amplitude=5pt}, yshift=-20pt]
   (3,3)  -- (12,3) 
   node [black,midway,above=4pt,xshift=-2pt] {\footnotesize $A_{(|j_{k-1}-j_k|-2)}$};
\node (x) at (-2,1) {$Q_k=$};

\end{tikzpicture}
\end{center}
Note that the quiver $Q_k$ is attached to the vertex $(j_{k-1},j_k)$ by a $3$-cycle. The exchange quiver for a locally finite triangulation is thus of the form
\begin{center}
\begin{tikzpicture}[scale = .75, transform shape]
\tikzstyle{every node}=[font=\small]
\node (1) at (0,0) {$(i_1,j_1)$};
\node (2) at (3,0) {$(i_2,j_2)$};
\node (3) at (6,0) {$(i_3,j_3)$};

\node (4) at (3,-3) {$Q_1$};
\node (5) at (6,-3) {$Q_2$};

\node (6) at (9,0) {\ldots};

\node (7) at (12,0) {$(i_k,j_k)$};
\node (8) at (15,0) {$(i_{k+1},j_{k+1})$};
\node (9) at (15,-3) {$Q_k$};

\node (10) at (18,0) {$\ldots$};

\draw (1) -- (2);
\draw (3) -- (2);
\draw (4) -- (2);
\draw (1) -- (4);

\draw (5) -- (2);
\draw (3) -- (5);
\draw (3) -- (6);

\draw (6) -- (7);
\draw (8) -- (7);
\draw (9) -- (7);
\draw (8) -- (9);

\draw (8) -- (10);
\end{tikzpicture}
\end{center}
If for $k \geq 1$ the arc $(i_k,j_k)$ passes over $(i_{k-1},j_{k-1})$ to the right the oriented 3-cycle containing $(i_{k-1},j_{k-1})$ and $(i_k,j_k)$ has a clockwise orientation and $Q_k$ is mutation-equivalent to a quiver of type $A_{(|j_{k-1}-j_k|-2)}$ with frozen vertices corresponding to the sides. If on the other hand  the arc $(i_k,j_k)$ passes over $(i_{k-1},j_{k-1})$ to the left, the oriented 3-cycle containing $(i_{k-1},j_{k-1})$ and $(i_k,j_k)$ has an anti-clockwise orientation  and $Q_k$ is mutation-equivalent to a quiver of type $A_{(|i_{k}-i_{k-1}|-2)}$ with frozen vertices.

The considerations for triangulations with a fountain or a split fountain are very similar to the locally finite case. However, we cannot just start with any arc and build a sequence of minimal arcs passing over each other, as we would never arrive at the left-fountain if we had started with an arc in the right-fountain, and vice versa. In fact, we need to build a sequence of arcs including arcs from both the right-fountain and the left-fountain. Let $l$ be a left-fountain and $r$ a right-fountain in a given triangulation, where $r$ and $l$ might coincide. We have a generic place to start for both the right and the left-fountain---namely the ``smallest fountain arcs'', i.e.\ the arcs $(r,s_0)$ and $(k_0,l)$ where $s_0$ is minimal and $k_0$ is maximal among all arcs of these forms in the triangulation. This gives us sequences of arcs $\{(r,s_i)\}_{i \in \mathbb{Z}_{\geq 0}}$ and $\{(k_i,l)\}_{i \in \mathbb{Z}_{\geq 0}}$ where $(r,s_{i})$ is the minimal arc passing over $(r,s_{i-1})$ and $(k_i,l)$ is the minimal arc passing over $(k_{i-1},l)$. We call the sequence $\{(r,s_i),(k_i,l)\}_{i \in \mathbb{Z}}$ the sequence of fountain arcs.

\begin{center}
\begin{tikzpicture}[scale = .75, transform shape]
\tikzstyle{every node}=[font=\small]
\draw (-6.5,0) -- (7.5,0);
\draw (0, 0.1) -- (0, -0.1) node [below] {$l=r$};
\draw (3, 0.1) -- (3, -0.1) node [below] {$s_0$};
\draw (5, 0.1) -- (5, -0.1) node [below] {$s_1$};
\draw (7, 0.1) -- (7, -0.1) node [below] {$s_2$};
\draw (-2, 0.1) -- (-2, -0.1) node [below] {$k_0$};
\draw (-5, 0.1) -- (-5, -0.1) node [below] {$k_1$};
\draw (-6, 0.1) -- (-6, -0.1) node [below] {$k_2$};
\draw[loosely dotted] (-6.5,0.5) -- (-6,0.5);
\draw[loosely dotted] (7,0.5) -- (7.5,0.5);
\path[thick] (0,0) edge [out= 60, in= 120] (5,0);
\path[thick] (0,0) edge [out= 60, in= 120] (3,0);
\path (1,0) edge [out= 60, in= 120] (3,0);
\path (3,0) edge [out= 60, in= 120] (5,0);
\path[thick] (-5,0) edge [out= 60, in= 120] (0,0);
\path (-4,0) edge [out= 60, in= 120] (-2,0);
\path[thick] (-2,0) edge [out= 60, in= 120] (0,0);
\path (-5,0) edge [out= 60, in= 120] (-2,0);
\path[thick] (0,0) edge [out= 60, in= 120] (7,0);
\path (5,0) edge [out= 60, in= 120] (7,0);
\path[thick] (-6,0) edge [out= 60, in= 120] (0,0);
\end{tikzpicture}
\end{center}
Again, we can view the arcs in the gaps between $s_{i-1}$ and $s_i$, respectively between $k_i$ and $k_{i-1}$, as triangulations of a $(|s_i-s_{i-1}|+1)$-gon, respectively a $(|k_{i-1}-k_i|+1)$-gon. Hence for a triangulation with a fountain or a split fountain, the component of the exchange quiver corresponding to the right-fountain is of the form

\begin{center}
\begin{tikzpicture}[scale = .75, transform shape]
\tikzstyle{every node}=[font=\small]
\node (1) at (0,0) {$(i_1,j_1)$};
\node (2) at (3,0) {$(i_2,j_2)$};
\node (3) at (6,0) {$(i_3,j_3)$};

\node (4) at (3,-3) {$Q_1$};
\node (5) at (6,-3) {$Q_2$};

\node (6) at (9,0) {\ldots};

\node (7) at (12,0) {$(i_k,j_k)$};
\node (8) at (15,0) {$(i_{k+1},j_{k+1})$};
\node (9) at (15,-3) {$Q_k$};

\node (10) at (18,0) {$\ldots$};

\draw[->] (1) -- (2);
\draw[<-] (3) -- (2);
\draw[<-] (4) -- (2);
\draw[<-] (1) -- (4);

\draw[->] (5) -- (2);
\draw[->] (3) -- (5);
\draw[->] (3) -- (6);

\draw[->] (6) -- (7);
\draw[<-] (8) -- (7);
\draw[<-] (9) -- (7);
\draw[->] (8) -- (9);

\draw[->] (8) -- (10);

\end{tikzpicture}
\end{center}
where for all $k\geq1$, $Q_k$ is mutation-equivalent to a quiver of type $A_{|j_k-j_{k-1}|-2}$ with frozen vertices corresponding to sides and the component of corresponding to the left-fountain is of a similar form but with anti-clockwise oriented 3-cycles containing $(i_{k-1},j_{k-1})$ and $(i_k,j_k)$ and with $Q_k$ mutation-equivalent to a quiver of type $A_{|i_{k-1}-i_{k}|-2}$.

These are all the components for the fountain case, as we included all the arcs in the right- as well as in the left-fountain. In the split fountain, we get an additional finite component for the gap between the left-fountain $l$ and the right-fountain $r$. As the arc $(l,r)$ is not mutable, we view it as a frozen vertex. Then the arcs in the gap between $l$ and $r$ are a triangulation of the $(|r-l| + 1)$-gon and the corresponding subquiver is mutation-equivalent to $A_{|r-l|+1}$ with frozen vertices corresponding to the sides $(l,l+1), \ldots, (r-1,r)$ and the frozen arc $(l,r)$.
\end{proof}

In the case of the locally finite triangulations and the triangulations with a fountain---that is, those corresponding to cluster tilting subcategories---the principal parts of the exchange quivers are actually the Gabriel quivers for their corresponding subcategories.   That is, the algorithm we have given to construct exchange quivers from triangulations of the $\infty$-gon do match the cluster structure on the category $D$.

\begin{theorem}\label{exchange quivers are Gabriel quivers}
Let $\mathfrak{t}$ be a triangulation of the $\infty$-gon which is locally finite or has a fountain. Then its exchange quiver $Q_{\mathfrak{t}}$ is an ice quiver which has as principal part the quiver of the cluster tilting subcategory corresponding to the triangulation $\mathfrak{t}$.
\end{theorem}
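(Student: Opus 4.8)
The plan is to compare, arrow by arrow, the principal part of the combinatorially defined quiver $Q_{\mathfrak{t}}$ of Definition~\ref{exchange quiver} with the Gabriel quiver of the cluster tilting subcategory $\T$ corresponding to $\mathfrak{t}$ under the bijection of Theorem~\ref{triangulations correspond to maximal 1-orthogonal subcategories}. Recall that the Gabriel quiver of $\T$ has the indecomposables of $\T$ (equivalently, the arcs of $\mathfrak{t}$) as its vertices, and that the number of arrows from an arc $a$ to an arc $b$ is $\dim_k \rad_{\T}(a,b)/\rad^2_{\T}(a,b)$, the dimension of the space of irreducible morphisms $a \to b$ in the additive category $\T$, where $\rad_{\T}$ denotes the radical of $\T$. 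Since the frozen vertices of $Q_{\mathfrak{t}}$ are exactly the sides of the $\infty$-gon, which are not objects of $D$, the ice-quiver claim reduces to bookkeeping once the principal part is identified; so the real content is to show that $\rad_{\T}(a,b)/\rad^2_{\T}(a,b)$ is one-dimensional precisely when $a$ and $b$ are two sides of a common triangle of $\mathfrak{t}$, with the orientation prescribed by the clockwise convention, and is zero otherwise.

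First I would record the $\Hom$-space combinatorics of $D$. Using the coordinate system on $\ZZ\vec{A}_{+\infty}$ from \cite{HJ} (in which indecomposables are the pairs $(m,n)$ with $m \le n-2$ and the Auslander--Reiten translate is the shift $(m,n)\mapsto(m-1,n-1)$), the nonzero $\Hom$-spaces between indecomposables are one-dimensional and occur in the explicit pattern dictated by paths in the Auslander--Reiten quiver; in particular the irreducible morphisms of $D$ itself are exactly the arrows of $\ZZ\vec{A}_{+\infty}$. This gives a concrete handle on every morphism between two arcs of $\mathfrak{t}$ and on how such a morphism may be composed, which is what the factorization arguments below require.

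The core step is then the characterization of irreducibility inside $\T$. I would argue that a nonzero morphism $a \to b$ between arcs of $\mathfrak{t}$ is irreducible in $\T$ if and only if $a$ and $b$ bound a common triangle of $\mathfrak{t}$. For the \emph{if} direction, when $a$ and $b$ are two sides of a triangle no further arc of $\mathfrak{t}$ lies geometrically between them, and I would show, using the $\Hom$ pattern above, that the corresponding morphism cannot be written as a composite of two non-isomorphisms in $\T$. For the \emph{only if} direction, if $a$ and $b$ do not share a triangle, then the local structure of the triangulation---controlled by the minimal arcs passing over $a$ and over $b$ from Lemma~\ref{minimal arc}, exactly as in the connectedness analysis of Lemma~\ref{vertices connected}---produces an intermediate arc of $\mathfrak{t}$ through which any morphism $a \to b$ factors, so that $a \to b$ lies in $\rad^2_{\T}(a,b)$; when $\Hom_{\T}(a,b)=0$ there is nothing to prove. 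Finally I would match orientations: for a triangle on vertices $i<j<k$ one checks directly from the Auslander--Reiten quiver that the irreducible morphisms among $(i,j),(j,k),(i,k)$ run as the oriented $3$-cycle $(i,j)\to(i,k)\to(j,k)\to(i,j)$, which is precisely the clockwise $3$-cycle assigned by Definition~\ref{exchange quiver}.

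I expect the main obstacle to be the \emph{only if} half of the core step: producing, for any two arcs not sharing a triangle, an explicit intermediate arc of $\mathfrak{t}$ together with a genuine factorization of the morphism in $\T$ (not merely in $D$). This requires combining the minimal-arc structure of Lemma~\ref{minimal arc} with the $\Hom$-space computations to guarantee both that the intermediate factor lies in $\T$ and that the factorization is nontrivial, and it must be carried out uniformly across the locally finite and fountain cases. The orientation bookkeeping is routine once the local triangle computation is fixed, and the frozen-vertex and ice-quiver statements are then immediate.
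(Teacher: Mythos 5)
Your proposal is correct and follows essentially the same route as the paper: both compute the morphism spaces between arcs of $\mathfrak{t}$ via the crossing criterion of Lemma~\ref{non crossing arcs}, then identify the irreducible morphisms in $\T$ with pairs of sides of a common triangle by a factorization argument, and finally check the clockwise orientation against the Auslander--Reiten coordinates. If anything, your plan is more explicit than the paper about the ``only if'' half (that morphisms between arcs not sharing a triangle lie in $\rad^2_{\T}$), which the paper's proof treats only implicitly after listing the three possible endpoint configurations for a nonzero $\Hom$-space.
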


For the proof of Theorem \ref{exchange quivers are Gabriel quivers} we use the following lemma.  Recall that the shift functor $\Sigma$ acts (on coordinates) as $\Sigma(m,n)=(m-1,n-1)$.

\begin{lm}[{\cite[Proposition 2.2 and Lemma 3.6]{HJ}}]\label{non crossing arcs}
Let $x$ and $y$ be two indecomposable objects in the category $D$. Then we have $\Hom_D(x,y)=0$ if and only if the arcs corresponding to $\Sigma x$ and $y$ do not cross. Otherwise, we have $\Hom_D(x,y) \cong k$, where $k$ is the underlying field. \qed
\end{lm}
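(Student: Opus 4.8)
The plan is to reduce the statement to an explicit, elementary computation inside the DG-world, exploiting that $A = k[X]$ (with $\deg X = 1$ and zero differential) behaves like a graded principal ideal domain.

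\emph{Step 1 (explicit indecomposables).} First I would fix DG-module representatives for the indecomposables. Since $A$ is formal and graded-hereditary, $D$ is equivalent to the bounded derived category of finite-dimensional graded $A$-modules, whose indecomposables are the shifts $\Sigma^j M_\ell$ of the cyclic truncations $M_\ell := k[X]/(X^\ell)$. Matching the homology length of $M_\ell$ with the arc length, together with the given action $\Sigma(m,n) = (m-1,n-1)$, forces the dictionary $\text{arc}(m,n) \leftrightarrow \Sigma^{-n} M_{n-m-1}$ (up to a global choice of shift origin). Writing $x \leftrightarrow \Sigma^{-b}M_p$ with $p = b-a-1$ and $y \leftrightarrow \Sigma^{-d}M_q$ with $q = d-c-1$, we get $\Hom_D(x,y) = \Hom_D(M_p, \Sigma^{\,b-d} M_q)$, so the whole statement becomes a computation of the graded complex $\operatorname{RHom}_A(M_p, M_q) = \bigoplus_s \Hom_D(M_p, \Sigma^s M_q)$.

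\emph{Step 2 (the computation).} Resolve $M_p$ by the two-term semifree module $F = Ae_0 \oplus Ae_1$ with $\deg e_0 = 0$, $\deg e_1 = p+1$ and $de_1 = X^p e_0$; then $\operatorname{RHom}_A(M_p, M_q) \simeq \Hom_A(F, M_q)$ is the two-term complex $[\,M_q \xrightarrow{\,\cdot X^p\,} M_q\,]$ with slots coming from $e_0$ and $e_1$. Its cohomology is $\ker(\cdot X^p) \oplus \cok(\cdot X^p)$ computed on $M_q = k[X]/(X^q)$, and both $\ker$ and $\cok$ are themselves truncated cyclic modules, hence \emph{multiplicity-free} in each internal degree. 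The key structural point is that, after accounting for the internal shift $\deg e_1 = p+1$, the kernel contribution sits entirely in non-negative cohomological degrees while the cokernel contribution sits in degrees $\le -2$; these ranges are \emph{disjoint}. Consequently every graded piece $\Hom_D(M_p, \Sigma^s M_q)$ is at most one-dimensional, which yields the dichotomy ``$0$ or $k$'' immediately.

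\emph{Step 3 (matching with crossings).} It remains to compare the two intervals of $s$ on which $\Hom_D(M_p, \Sigma^s M_q) \neq 0$ with the two ways that arcs cross. Reading off the internal degrees of $\ker(\cdot X^p)$ and $\cok(\cdot X^p)$ and substituting $s = b-d$, $p = b-a-1$, $q = d-c-1$, the kernel interval translates (uniformly in the sign of $q-p$) precisely into the configuration $a-1 < c < b-1 < d$, and the cokernel interval into $c < a-1 < d < b-1$; these are exactly the two cases in which the arcs $\Sigma x = (a-1,b-1)$ and $y = (c,d)$ cross. For non-crossing $y$ the value $s = b-d$ falls strictly outside both intervals, giving $\Hom_D(x,y) = 0$.

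\emph{Main obstacle.} The genuine work lies in the bookkeeping of Steps 1 and 3: pinning down the shift origin of the arc--module dictionary and the homological-degree convention (that is, whether $\Hom_D(M,\Sigma^s N)$ is $H^s$ or $H^{-s}$ of the Hom-complex) so that the two nonvanishing intervals land exactly on the two crossing patterns, and doing the endpoint arithmetic uniformly in the sign of $q-p$. The homological algebra itself is routine once the resolution $F$ is in hand; the only subtlety is aligning the grading conventions of $D$ with the combinatorics of the $\infty$-gon.
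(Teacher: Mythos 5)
You should first be aware that the paper does not prove this lemma at all: it is imported verbatim from Holm--J{\o}rgensen (their Proposition 2.2 and Lemma 3.6), which is what the box at the end of the statement signals. So your argument is not an alternative to anything in this paper but a self-contained reconstruction of the Holm--J{\o}rgensen computation. On its own terms it is essentially correct, and I verified the core of it: with $F = Ae_0 \oplus Ae_1$, $\deg e_1 = p+1$, $de_1 = X^p e_0$, the complex $\Hom_A(F,M_q)$ is indeed $[\,M_q \xrightarrow{\;\cdot X^p\;} M_q\,]$ with the $e_1$-slot shifted; the kernel is spanned by $X^j$ for $\max(q-p,0) \le j \le q-1$, the cokernel by $X^j$ for $0 \le j \le \min(p,q)-1$, each one-dimensional per internal degree, and the two contributions occupy disjoint shift ranges, giving the dichotomy $0$ or $k$. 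Substituting $s=b-d$, $p=b-a-1$, $q=d-c-1$, the kernel window becomes exactly $a \le c$, $c < b-1$, $b \le d$, i.e.\ $a-1<c<b-1<d$, and the cokernel window becomes $c<a-1$, $a \le d$, $d<b-1$, i.e.\ $c<a-1<d<b-1$; these are precisely the two crossing configurations for $\Sigma x = (a-1,b-1)$ and $y=(c,d)$, and a non-crossing $y$ puts $s$ outside both windows. This buys a completely explicit, elementary proof where the paper only has a citation.

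One soft spot in Step 1: you claim that matching homology length with arc length, together with $\Sigma$-equivariance, forces the dictionary ``up to a global choice of shift origin.'' It does not. The general solution of the equivariance constraint is $(m,n) \mapsto \Sigma^{-n+g(n-m)}M_{n-m-1}$ with $g$ an \emph{arbitrary} function of the arc length; a non-constant $g$ would shift the two nonvanishing windows by length-dependent amounts and destroy the match with crossings. To know that your normalization $g \equiv 0$ is the paper's correspondence (which is defined via the Holm--J{\o}rgensen coordinatization of the Auslander--Reiten quiver, not merely via the $\Sigma$-action), you must additionally check compatibility with the mesh structure: the projections $M_\ell \twoheadrightarrow M_{\ell-1}$ and the maps $X\cdot\colon M_\ell \to \Sigma^{-1}M_{\ell+1}$ must realize the AR-quiver arrows $(m,n)\to(m+1,n)$ and $(m,n)\to(m,n+1)$. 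This check does succeed for your dictionary, so the gap closes with a few extra lines, but as written the forcing claim is too quick and is exactly where a wrong normalization could slip through.
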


We now prove Theorem \ref{exchange quivers are Gabriel quivers}.

\begin{proof}
By construction, the vertices in the principal part of $Q_{\mathfrak{t}}$ correspond to indecomposable objects in a cluster tilting subcategory; we can match the frozen vertices to the zero object. Thus We only have to concern ourselves with the arrows. The exchange quiver $Q_{\mathfrak{t}}$ is composed of $3$-cycles of the form

\begin{displaymath}
\xymatrix{(i,j) \ar[r] 	& (i,k) \ar[d]\\
					 	& (j,k) \ar[lu]}
\end{displaymath}
which come from triangles
\begin{center}
\begin{tikzpicture}[scale =.75]
\tikzstyle{every node}=[font=\small]
\draw (-5.5,0) -- (1.5,0);
\draw (-3,0.1) -- (-3,-0.1) node[below]{$j$};
\draw (1,0.1) -- (1,-0.1) node[below]{$k$};
\draw (-5,0.1) -- (-5,-0.1) node[below]{$i$};

\path (-3,0) edge [out= 60, in= 120] (1,0);
\path (-5,0) edge [out= 60, in= 120] (1,0);
\path (-5,0) edge [out= 60, in= 120] (-3,0);

\end{tikzpicture}
\end{center}

The pairs of integers $(i,j)$, $(i,k)$ and $(j,k)$ label the vertices in the exchange quiver as well as the indecomposable objects of $D$, using the standard coordinates on the Auslander--Reiten quiver. Since we only make a claim on the principal part of $Q_{\mathfrak{}t}$, we assume all of $(i,j)$, $(i,k)$ and $(j,k)$ to correspond to non-zero objects in $D$. By Lemma \ref{non crossing arcs} we get that $\Hom_D((i,j),(i,k)) \cong k$, since $\Sigma(i,j)=(i-1,j-1)$ crosses $(i,k)$.  Similarly, $\Hom_D((i,k),(j,k)) \cong k$ and $\Hom((j,k),(i,j)) \cong k$ and all other morphism spaces between any of these objects are zero.

\begin{center}
\begin{tikzpicture}
\tikzstyle{every node}=[font=\tiny]
\node (a) at (0,0) {$\bullet$};
\node (b) at (1,1) {$\bullet$};
\node (c) at (2,2) {$\bullet$};
\node (d) at (3,3) {$\ldots$};
\node (e) at (2,0) {$\bullet$};
\node (f) at (3,1) {$(i,j)$};
\node (g) at (4,2) {$\ldots$};
\node (h) at (5,3) {$(i,k)$};
\node (i) at (4,0) {$\ldots$};
\node (j) at (5,1) {$\ldots$};
\node (k) at (6,2) {$\ldots$};
\node (l) at (7,3) {$\ldots$};
\node (m) at (6,0) {$\ldots$};
\node (n) at (7,1) {$\bullet$};
\node (o) at (8,2) {$\bullet$};
\node (p) at (8,0) {$(j,k)$};
\node (q) at (0,2) {$\bullet$};
\node (r) at (1,3) {$\bullet$};
\node (s) at (-1,1) {$\bullet$};

\draw[loosely dotted] (q) -- (s);
\draw[loosely dotted] (a) -- (s);
\draw[loosely dotted] (s) -- (-2,2);
\draw[loosely dotted] (s) -- (-2,0);
\draw[loosely dotted] (r) -- (0,4);
\draw[loosely dotted] (q) -- (-1,3);
\draw[loosely dotted] (o) -- (9,3);
\draw[loosely dotted] (o) -- (9,1);
\draw[loosely dotted] (p) -- (9,1);
\draw[loosely dotted] (l) -- (8,4);
\draw[loosely dotted] (d) -- (4,4);
\draw[loosely dotted] (h) -- (6,4);
\draw[loosely dotted] (r) -- (2,4);

\draw[->] (a) -- (b);
\draw[->] (b) -- (c);
\draw[->] (c) -- (d);
\draw[->] (e) -- (f);
\draw[->] (f) -- (g);
\draw[->] (g) -- (h);
\draw[->] (i) -- (j);
\draw[->] (j) -- (k);
\draw[->] (k) -- (l);
\draw[->] (m) -- (n);
\draw[->] (n) -- (o);
\draw[->] (q) -- (r);

\draw[->] (q) -- (b);
\draw[->] (r) -- (c);
\draw[->] (b) -- (e);
\draw[->] (c) -- (f);
\draw[->] (d) -- (g);
\draw[->] (f) -- (i);
\draw[->] (g) -- (j);
\draw[->] (h) -- (k);
\draw[->] (j) -- (m);
\draw[->] (k) -- (n);
\draw[->] (n) -- (p);
\draw[->] (l) -- (o);

\draw[->] (s) -- (a);
\draw[->] (s) -- (q);
\end{tikzpicture}
\end{center}
It remains to show that these maps do not factor through any other maps in the subcategory, so that they indeed define arrows in the Gabriel quiver. Let $(r,s)$ and $(t,u)$ be any two indecomposables in the cluster tilting subcategory $\T$ corresponding to the triangulation $\mathfrak{t}$. Then again by Lemma \ref{non crossing arcs} there are morphisms from $(r,s)$ to $(t,u)$ if and only if the arcs corresponding to $\Sigma(r,s) = (r-1,s-1)$ and $(t,u)$ intersect. Hence either we have
$$
r-1 < t < s-1 < u
$$
or
$$
t<r-1<u<s-1.
$$
On the other hand, because both $(t,u)$ and $(r,s)$ are in $\mathfrak{t}$ they cannot intersect so we get either
$
r \leq t < u \leq s
$,
$
t \leq r < s \leq u
$,
or
$
t < u \leq r < s
$. Hence only one of the following three cases is possible:

\begin{center}
\begin{tikzpicture}[scale =.75]
\tikzstyle{every node}=[font=\small]
\draw (-5.5,0) -- (1.5,0);
\draw (-3,0.1) -- (-3,-0.1) node[below]{$s$};
\draw (1,0.1) -- (1,-0.1) node[below]{$u$};
\draw (-5,0.1) -- (-5,-0.1) node[below]{$r=t$};

\node (Schrift1) at (-2,2) {$r=t<s<u$};

\path (-5,0) edge [out= 60, in= 120] (1,0);
\path (-5,0) edge [out= 60, in= 120] (-3,0);

\draw[xshift = 10cm] (-5.5,0) -- (1.5,0);
\draw[xshift = 10cm] (-3,0.1) -- (-3,-0.1) node[below]{$t$};
\draw[xshift = 10cm] (1,0.1) -- (1,-0.1) node[below]{$s=u$};
\draw[xshift = 10cm] (-5,0.1) -- (-5,-0.1) node[below]{$r$};

\node[xshift = 10cm] (Schrift1) at (-5,2) {$r<t<s=u$};

\path[xshift = 10cm] (-5,0) edge [out= 60, in= 120] (1,0);
\path[xshift = 10cm] (-3,0) edge [out= 60, in= 120] (1,0);
\end{tikzpicture}
\end{center}
or
\begin{center}
\begin{tikzpicture}[scale =.75]
\tikzstyle{every node}=[font=\small]
\draw (-5.5,0) -- (1.5,0);
\draw (-3,0.1) -- (-3,-0.1) node[below]{$r=u$};
\draw (1,0.1) -- (1,-0.1) node[below]{$s$};
\draw (-5,0.1) -- (-5,-0.1) node[below]{$t$};

\node (Schrift) at (-2,2) {$t<r=u<s$};

\path (-3,0) edge [out= 60, in= 120] (1,0);
\path (-5,0) edge [out= 60, in= 120] (-3,0);

\end{tikzpicture}
\end{center}

Consider now again our triangle
\begin{center}
\begin{tikzpicture}[scale =.75]
\tikzstyle{every node}=[font=\small]
\draw (-5.5,0) -- (1.5,0);
\draw (-3,0.1) -- (-3,-0.1) node[below]{$j$};
\draw (1,0.1) -- (1,-0.1) node[below]{$k$};
\draw (-5,0.1) -- (-5,-0.1) node[below]{$i$};

\path (-3,0) edge [out= 60, in= 120] (1,0);
\path (-5,0) edge [out= 60, in= 120] (1,0);
\path (-5,0) edge [out= 60, in= 120] (-3,0);

\end{tikzpicture}
\end{center}
Then the only objects $(m,n)$ such that there are maps $(i,j) \to (m,n)$ and $(m,n) \to (i,k)$ are $(m,n) = (i,j)$ or $(m,n)=(i,k)$ themselves, hence in particular the maps $(i,j) \to (i,k)$ do not factor through any other objects in the cluster tilting subcategory. The same argument holds for maps $(i,k) \to (j,k)$ and $(j,k) \to (i,k)$.  Therefore the arrows in $Q_{\mathfrak{t}}$ are in bijection with those in the corresponding Gabriel quiver and we are done.
\end{proof}

\subsection{Mutation equivalence classes}

Recall that a key feature of our definition of a cluster algebra of infinite rank is that we only allow finite sequences of mutations and hence not all the exchange quivers obtained in the previous section are mutation-equivalent.  Indeed the next theorem shows that there are very many equivalence classes.

\begin{theorem}
There are uncountably many mutation-equivalence classes of exchange quivers of locally finite triangulations of the $\infty$-gon, as well as of triangulations with a fountain or a split fountain.
\end{theorem}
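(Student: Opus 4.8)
The plan is to combine a soft counting principle with an explicit supply of pairwise non-isomorphic exchange quivers. The counting principle is exactly where the finiteness restriction of Definition~\ref{cluster algebras of infinite rank} does all the work: fixing any quiver $Q$ with countably many vertices, every quiver mutation-equivalent to $Q$ arises from a finite sequence of mutations at mutable vertices of $Q$, and the finite sequences form a set of finite words over a countable alphabet, hence a countable set. Therefore each mutation-equivalence class contains only countably many isomorphism classes of ice quivers. Consequently, if for each of the three types I can produce \emph{uncountably many} pairwise non-isomorphic exchange quivers of that type, then these must fall into uncountably many mutation-equivalence classes, since a countable union of countable classes is countable. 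This is precisely the point at which allowing infinite mutation sequences, as in Example~\ref{ex:fin-many-mutations}, would collapse the argument.

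For the locally finite case I would build the family from binary sequences. Given $\epsilon=(\epsilon_1,\epsilon_2,\dots)\in\{L,R\}^{\mathbb{N}}$ in which both symbols occur infinitely often, define a nested chain $a_0\subset a_1\subset\cdots$ by $a_0=(0,2)$ and, writing $a_{k-1}=(i_{k-1},j_{k-1})$, setting $a_k=(i_{k-1}-1,\,j_{k-1})$ when $\epsilon_k=L$ and $a_k=(i_{k-1},\,j_{k-1}+1)$ when $\epsilon_k=R$. Since both symbols recur infinitely often the endpoints satisfy $i_k\to-\infty$ and $j_k\to+\infty$, so $\mathfrak{t}_\epsilon:=\{a_k\}_{k\ge 0}\cup\{\text{sides}\}$ is a triangulation of the $\infty$-gon with no further arcs, and it is locally finite because each integer is an endpoint of only finitely many $a_k$. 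By Theorem~\ref{classif-of-exch-quivers} the principal part of $Q_{\mathfrak{t}_\epsilon}$ is a quiver of type $A_{+\infty}$ whose underlying path is $a_0-a_1-a_2-\cdots$, and the orientation of the arrow between $a_{k-1}$ and $a_k$ records whether $\epsilon_k=R$ or $\epsilon_k=L$. Here $a_0$ is distinguished as the unique mutable vertex of degree one in the principal part, so any isomorphism of exchange quivers fixes $a_0$, preserves the path, and hence reconstructs $\epsilon$. Thus distinct $\epsilon$ yield non-isomorphic quivers, and as the set of admissible $\epsilon$ is uncountable we obtain uncountably many such quivers.

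For the fountain and split fountain cases the spine is forced to be linearly oriented, so instead of reorienting I would \emph{decorate} it. Fix a fountain at $0$ with the standard left fan $\{(-n,0)\}_{n\ge 2}$ and use $\epsilon\in\{0,1\}^{\mathbb{N}}$ to prescribe the gaps between the right-fountain arcs $(0,m_0),(0,m_1),\dots$ by $m_0=2$ and $m_k=m_{k-1}+1+\epsilon_k$, inserting the arc $(m_{k-1},m_{k-1}+2)$ to triangulate each gap of width two. By Theorem~\ref{classif-of-exch-quivers} the right-hand component of $Q_{\mathfrak{t}_\epsilon}$ is then a linearly oriented $A_{+\infty}$ spine carrying a pendant mutable vertex, attached by an oriented $3$-cycle, at exactly the stages with $\epsilon_k=1$; reading this decoration pattern outward from the distinguished end of the spine recovers $\epsilon$, so distinct $\epsilon$ again give non-isomorphic quivers. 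The split fountain case is identical after fixing a left-fountain at $0$ with a left fan, a right-fountain at $2$, regarding the non-mutable arc $(0,2)$ as frozen, and decorating the right-fountain by the same recipe.

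The routine part is the counting engine; the genuine work, and the main obstacle, is the verification of pairwise non-isomorphism, i.e.\ confirming that the combinatorial datum $\epsilon$ can really be read back off the abstract exchange quiver. This rests on pinning down the distinguished end of the $A_{+\infty}$ spine as a quiver-intrinsic feature via the degree-one characterisation (using that $Q_{\mathfrak{t}}$ has no loops or $2$-cycles, which holds by the cluster structure), and on checking that reading orientations, respectively pendant decorations, outward from that end reconstructs $\epsilon$. One must also confirm that each $\mathfrak{t}_\epsilon$ is a valid triangulation of the intended type, for which the shape results of Theorem~\ref{classif-of-exch-quivers} together with Lemma~\ref{minimal arc} provide the structural control needed.
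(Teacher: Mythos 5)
Your proposal is correct, but it reaches the conclusion by a genuinely different route from the paper. The paper characterises mutation-equivalence directly at the level of triangulations: a triangulation is encoded by its sequence of minimal arcs (or fountain arcs) together with the finite gap triangulations, and the paper asserts that two triangulations are mutation-equivalent if and only if these sequences eventually agree up to a shift of index; uncountability then follows because each ``eventual-agreement'' class is countable while the set of sequences is uncountable. You instead isolate the cardinality argument in its cleanest form---each mutation class is countable simply because finite words over a countable vertex set form a countable set, which is exactly where the finite-mutation restriction of Definition~\ref{cluster algebras of infinite rank} enters---and then supply explicit uncountable families of pairwise non-isomorphic exchange quivers of each type. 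What the paper's approach buys is an actual description of the equivalence classes (the ``if'' direction of its characterisation is extra information your argument does not attempt); what your approach buys is robustness: you never need the delicate ``only if'' claim that a finite flip sequence can only perturb the arc sequence in a tail-preserving way, since the countability of each class is immediate from the definition. The price you pay is the non-isomorphism verification, and you correctly identify it as the real work: one must check that the degree-one end of the $A_{+\infty}$ spine is quiver-intrinsic, that pendant vertices are distinguishable from spine vertices (e.g.\ as mutable degree-two vertices whose two neighbours are themselves joined by an arrow, forming the oriented $3$-cycle), and that reading orientations or decorations outward from the distinguished end recovers $\epsilon$. These checks go through for your families. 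One further remark: if, as the paper's own proof suggests, mutation-equivalence is taken on labelled exchange quivers (equivalently on triangulations), your non-isomorphism step becomes unnecessary---uncountably many triangulations of each type plus countability of each class already suffices---so your argument in fact proves the stronger, unlabelled version of the statement.
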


\begin{proof}
First, a triangulation with a fountain and a locally finite triangulation can never be in the same mutation equivalence class, as it is never possible to dissolve a fountain in just finitely many steps. This is also clear from the classification of their exchange quivers (Theorem~\ref{classif-of-exch-quivers}), as quiver mutation preserves connectedness.

Any triangulation $\mathfrak{t}$ with a fountain can be characterised by its sequence of fountain arcs and the triangulations in the gaps, i.e. the sequence $\{(l_i,k),\mathfrak{t}_i\}_{i \in \mathbb{Z}>0} \cup \{(k,l_i),\mathfrak{t}_i\}_{i \in \mathbb{Z}>0}$, where $\mathfrak{t}_i$ is the triangulation of the $(|l_i-l_{i-1}|+1)$-gon occurring in the gap between $l_i$ and $l_{i-1}$ (where we set $l_0=k$) and $\{(l_i,k),(k,l_i),\}_{i \in \mathbb{Z}}$ is the sequence of fountain arcs.

\begin{center}
\begin{tikzpicture}[scale = .75, transform shape]
\draw (-6.5,0) -- (7.5,0);
\draw (0, 0.1) -- (0, -0.1) node [below] {$k$};
\draw (3, 0.1) -- (3, -0.1) node [below] {$l_1$};
\draw (5, 0.1) -- (5, -0.1) node [below] {$l_2$};
\draw (7, 0.1) -- (7, -0.1) node [below] {$l_3$};
\draw (-2, 0.1) -- (-2, -0.1) node [below] {$l_{-1}$};
\draw (-5, 0.1) -- (-5, -0.1) node [below] {$l_{-2}$};
\draw (-6, 0.1) -- (-6, -0.1) node [below] {$l_{-3}$};
\draw[loosely dotted] (-6.5,0.5) -- (-6,0.5);
\draw[loosely dotted] (7,0.5) -- (7.5,0.5);
\path[thick] (0,0) edge [out= 60, in= 120] (5,0);
\path[thick] (0,0) edge [out= 60, in= 120] (3,0);
\path (1,0) edge [out= 60, in= 120] (3,0);
\path (3,0) edge [out= 60, in= 120] (5,0);
\path[thick] (-5,0) edge [out= 60, in= 120] (0,0);
\path (-4,0) edge [out= 60, in= 120] (-2,0);
\path[thick] (-2,0) edge [out= 60, in= 120] (0,0);
\path (-5,0) edge [out= 60, in= 120] (-2,0);
\path[thick] (0,0) edge [out= 60, in= 120] (7,0);
\path (5,0) edge [out= 60, in= 120] (7,0);
\path[thick] (-6,0) edge [out= 60, in= 120] (0,0);

\draw [decorate,decoration={brace,amplitude=5pt}, yshift=-20pt]
   (2.9,0)  -- (0.1,0) 
   node [black,midway,below=4pt,xshift=-2pt] {\footnotesize $\mathfrak{t}_1$};
\draw [decorate,decoration={brace,amplitude=5pt}, yshift=-20pt]
   (4.9,0)  -- (3.1,0) 
   node [black,midway,below=4pt,xshift=-2pt] {\footnotesize $\mathfrak{t}_2$};
\draw [decorate,decoration={brace,amplitude=5pt}, yshift=-20pt]
   (6.9,0)  -- (5.1,0) 
   node [black,midway,below=4pt,xshift=-2pt] {\footnotesize $\mathfrak{t}_3$};
\draw [decorate,decoration={brace,amplitude=5pt}, yshift=-20pt]
   (-0.1,0)  -- (-1.9,0) 
   node [black,midway,below=4pt,xshift=-2pt] {\footnotesize $\mathfrak{t}_{-1}$};
\draw [decorate,decoration={brace,amplitude=5pt}, yshift=-20pt]
   (-2.1,0)  -- (-4.9,0) 
   node [black,midway,below=4pt,xshift=-2pt] {\footnotesize $\mathfrak{t}_{-2}$};
\draw [decorate,decoration={brace,amplitude=5pt}, yshift=-20pt]
   (-5.1,0)  -- (-5.9,0) 
   node [black,midway,below=4pt,xshift=-2pt] {\footnotesize $\mathfrak{t}_{-3}$};
\end{tikzpicture}
\end{center}
Two triangulations with fountains characterised by the sequences 
$$
\{(l^1_i,k^1),\mathfrak{t}^1_i\}_{i \in \mathbb{Z}>0} \cup \{(k^1,l^1_i),\mathfrak{t}^1_i\}_{i \in \mathbb{Z}>0},
$$
respectively 
$$
\{(l^2_i,k^2),\mathfrak{t}_i\}_{i \in \mathbb{Z}>0} \cup \{(k^2,l^2_i),\mathfrak{t}^2_i\}_{i \in \mathbb{Z}>0},
$$ 
are mutation-equivalent if and only if they have the same fountain $k^1=k^2$ and there exist integers $m$ and $n$, such that for all $|j| \geq n$, 
$$
l^1_j = l^2_{j+n}
$$ 
and 
$$
\mathfrak{t}^1_j = \mathfrak{t}^2_{j+n}.
$$
This yields uncountably many mutation-equivalence classes. 

In a similar way, a locally finite triangulation $\mathfrak{t}$ is characterised by a sequence $\{(m_i,n_i),\mathfrak{t}_i\}_{i \in \mathbb{Z}\geq 0}$, where $(m_i,n_i)$ is the minimal arcs passing over $(m_{i-1},n_{i-1})$ for all $i \geq 1$ and $(m_0,n_0)$ is an arbitrary arc in $\mathfrak{t}$. Furthermore, for every $i \geq 1$, $\mathfrak{t}_i$ is the triangulation of the $(\max\{|m_{i-1}-m_i|,|n_i - n_{i-1}|\}+1)$-gon occurring in the gaps between the arcs of the sequence, and $\mathfrak{t}_0$ is the triangulation of the $(n_0-m_0)$-gon occurring underneath the arc $(m_0,n_0)$. 
Then two locally finite triangulations characterised by
$$
\{(m^1_i,n^1_i),\mathfrak{t}^1_i\}_{i \in \mathbb{Z}\geq 0},
$$
respectively
$$
\{(m^2_i,n^2_i),\mathfrak{t}^2_i\}_{i \in \mathbb{Z}\geq 0}
$$
are mutation-equivalent if and only if there exist integers $m$ and $n$, such that for all $|j| \geq n$, 
\begin{align*}
m^1_j = m^2_{j+n}\\
n^1_j = n^2_{j+n}
\end{align*}
and 
$$
\mathfrak{t}^1_j = \mathfrak{t}^2_{j+n}.
$$
 Note that this characterization of mutation-equivalence classes of locally finite triangulations does not depend on the first arc $(m_0,n_0)$ in the sequence. If we start with another arc $(m'_0,n'_0)$, by the proof of i) in Proposition~\ref{number of components}, there is an arc $(r,s)$ passing over both $(m_0,n_0)$ and $(m'_0,n'_0)$. However, any arc $(r,s)$ passing over both $(m_0,n_0)$ and $(m'_0,n'_0)$ appears in the sequence $\{(m_i,n_i)\}$ as well as in the sequence $\{(m'_i,n'_i)\}$. 

Let $\{(j_i,l_i)\}$ be a sequence of minimal arcs passing over each other. As the arcs in the sequence $\{(j_i,l_i)\}$ get arbitrarily wide (i.e.\ $j_i$ becomes arbitrarily negative and $l_i$ arbitrarily positive), there is an integer $k$, such that $j_k \leq r < s \leq l_k$ and such that either $j_{k-1} \geq r$ or $l_{k-1} \leq s$. If $(j_k,l_k)$ is passing over $(j_{k-1},l_{k-1})$ to the right  we have $r \leq j_{k-1} = l_k \leq r$, which implies $j_k \leq s$, as $(j_k,l_k)$ and $(r,s)$ must not intersect, hence $(j_k,l_k) = (r,s)$. The case where $(j_k,l_k)$ is passing over $(j_{k-1},l_{k-1})$ to the left follows from reflection. As $(r,s)$ appears in both sequences of minimal arcs, the sequences $\{(m_i,n_i),\mathfrak{t}_i\}$ and $\{(m'_i,n'_i),\mathfrak{t}'_i\}$ are the same onwards from the point where $(r,s)$ appears.
\end{proof}

Each representative $Q_{\mathfrak{t}}$ of a mutation equivalence class of exchange quivers of triangulations of the $\infty$-gon gives rise to a cluster algebra $\mathcal{A}_{Q_{\mathfrak{t}}}$ of infinite rank. Hence rather than giving rise to just one cluster algebra, the cluster structure on the category $D$ in fact consists of uncountably many sub-cluster structures (by which we mean cluster structures induced by subfamilies of cluster tilting objects in $D$), each of which gives rise to a cluster algebra. 

The category $D$ categorifies the cluster algebra $\mathcal{A}_{Q_{\mathfrak{t}}}$ via the cluster tilting subcategories that can be reached by finitely many mutations from the original cluster tilting subcategory corresponding to $\mathfrak{t}$. In \cite{JP} J\o rgensen and Palu define a Caldero--Chapoton map for categories with cluster tilting subcategories with possibly infinitely many isomorphism classes of indecomposables. When studying the Caldero--Chapoton map for the category $D$, they show that it can indeed be defined on all of $D$ if one considers cluster tilting subcategories corresponding to locally finite triangulations. However, if one starts with a fountain, the map cannot be extended on those indecomposables which cannot be reached by mutation of the corresponding cluster tilting subcategories.  Algebraically, we see this reflected in Theorem~\ref{t:CAonInfGr} below, where the split fountain case does not arise in the analysis of J\o rgensen--Palu because split fountains correspond to weak cluster tilting subcategories that are not cluster tilting.

\subsection{Cluster algebra structures on infinite Grassmannians}

Next we note that one can construct an infinite analogue of the coordinate rings of Grassmannians of planes in complex $n$-space.  This is described in detail in the appendix below, kindly provided by Michael Groechenig.  We show that this doubly-infinite Grassmannian coordinate ring and certain subrings of it admit cluster algebra structures arising from the above cluster combinatorics. 

The complex vector space of formal power series in $t$ and $t^{-1}$, $\mathbb{C}[[t,t^{-1}]]$, is a natural choice of pro-finite-dimensional vector space having a natural topological basis $\{ t_{i} \mid i\in \ZZ \}$.  As noted in Corollary~\ref{a:infiniteplucker} below, the homogeneous coordinate ring of the doubly-infinite Grassmannian $\Gr(2,\mathbb{C}[[t,t^{-1}]])$ may be identified with the graded ring that is the quotient of $\mathbb{C}[\Delta^{ij} \mid i<j]$ by the short Pl\"{u}cker relations
$$
\Delta^{ij}\Delta^{kl} = \Delta^{ik}\Delta^{jl} + \Delta^{il}\Delta^{jk}.
$$
where $i<k<j<l$.  We denote this ring by $\mathbb{C}[\Gr(2,\pm \infty)]$.

\begin{remark} This doubly-infinite Grassmannian is different from others appearing in the literature (e.g.\ \cite{SatoSato}, \cite{PressleySegal}, \cite{FioresiHacon}), although it is in a similar spirit.  We do not want a topologically complete ring, however, hence the construction described in the appendix is more appropriate.
\end{remark}

Then we may combine all the previous results of this section to obtain the following theorem on cluster algebras associated to triangulations of the $\infty$-gon.

\vfill
\pagebreak

\begin{theorem}\label{t:CAonInfGr} {\ }
\begin{itemize}
\item{A locally finite triangulation yields a cluster algebra structure on the whole coordinate ring $\mathbb{C}[\Gr(2, \pm \infty)]$, where we view $\Delta^{i,i+1}$ for $i \in \mathbb{Z}$ as coefficients.}
\item{A triangulation with a fountain $k$ yields a cluster algebra structure on the subalgebra of $\mathbb{C}[\Gr(2, \pm \infty)]$ generated by the Pl\"ucker coordinates $\{\Delta^{ij} \mid i < j \leq k \text{ or } k \leq i < j\}$, where we view $\Delta^{i,i+1}$ for $i \in \mathbb{Z}$ as coefficients.}
\item{A triangulation with a split fountain with left-fountain at $l$ and right-fountain at $r$, yields a cluster algebra structure on the subalgebra of $\mathbb{C}[\Gr(2, \pm \infty)]$ generated by the Pl\"ucker coordinates $\{\Delta^{ij} \mid i < j \leq l \text{ or } l \leq i < j \leq r \text{ or } r \leq i < j\}$, where we view $\Delta^{i,i+1}$ for $i \in \mathbb{Z}$ as well as $\Delta^{lr}$ as coefficients.}
\end{itemize}
\end{theorem}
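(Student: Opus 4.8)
The plan is to decategorify the combinatorics assembled above by matching each arc $(i,j)$ of the $\infty$-gon with the Pl\"ucker coordinate $\Delta^{ij}$, each side $(i,i+1)$ with the (frozen) coefficient $\Delta^{i,i+1}$, and each diagonal flip with a short Pl\"ucker relation; throughout we use the ice-quiver version of Definition~\ref{cluster algebras of infinite rank}, in which the side-vertices of $Q_{\mathfrak{t}}$ from step~(1) of Definition~\ref{exchange quiver} are frozen. First I would record the local dictionary. An arc about to be flipped is the common diagonal $(p,r)$ of a quadrilateral with vertices $p<q<r<s$, its flip being $(q,s)$. In $Q_{\mathfrak{t}}$ the vertex $(p,r)$ lies in the two clockwise triangles $(p,q,r)$ and $(p,r,s)$ of Definition~\ref{exchange quiver}, and reading off the arrows shows that $(p,r)$ receives arrows from $(p,q)$ and $(r,s)$ and emits arrows to $(q,r)$ and $(p,s)$. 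The exchange relation~(\ref{exchange relation}) at $(p,r)$ is therefore
\[
\Delta^{pr}\,(\Delta^{pr})' = \Delta^{pq}\Delta^{rs} + \Delta^{qr}\Delta^{ps},
\]
which is exactly the short Pl\"ucker relation with $(i,k,j,l)=(p,q,r,s)$, so that $(\Delta^{pr})'=\Delta^{qs}$. Thus every mutation of the seed attached to $\mathfrak{t}$ returns precisely the Pl\"ucker coordinate of the flipped arc, and every cluster variable is a Pl\"ucker coordinate.

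Next I would determine which Pl\"ucker coordinates actually arise, using Lemma~\ref{minimal arc} and reduction to finite sub-gons. In the locally finite case, given any arc $(i,j)$ there is by Lemma~\ref{minimal arc} an arc $(x,y)\in\mathfrak{t}$ passing over it; inside the finite $(y-x+1)$-gon below $(x,y)$ the arc $(i,j)$ is reached from $\mathfrak{t}$ by finitely many diagonal flips, exactly as in the finite type $A$ situation of~\cite{FZ2}, and these flips do not disturb $(x,y)$ or anything outside. Hence every $\Delta^{ij}$ is a cluster variable. In the fountain case I would use that a fountain at $k$ \emph{persists} under any finite sequence of mutations: finitely many flips change only finitely many arcs, so infinitely many arcs remain incident with $k$, and consequently no arc $(i,j)$ with $i<k<j$ (which would cross almost all of them) can ever appear; meanwhile the same sub-gon argument applied to the fountain arcs $(k,l)$ and $(m,k)$ shows every arc lying entirely on one side of $k$ is reached. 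This yields exactly the stated generating set. The split-fountain case is identical on the two outer components; on the finite middle component (the arcs $(i,j)$ with $l\leq i<j\leq r$) all arcs are reached by finite type $A$ flips, and since $(l,r)$ admits no flip it is frozen and contributes the coefficient $\Delta^{lr}$, in agreement with the remark following Lemma~\ref{minimal arc}.

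It then remains to identify $\mathcal{A}_{Q_{\mathfrak{t}}}$ with the (sub)ring spanned by these coordinates, not merely to exhibit a surjection. For this I would invoke the direct-limit description of $\mathbb{C}[\Gr(2,\pm \infty)]$ from the appendix (Corollary~\ref{a:infiniteplucker}): for each finite window $[a,b]$ the subring generated by $\{\Delta^{ij}\mid a\leq i<j\leq b\}$ is $\mathbb{C}[\Gr(2,b-a+1)]$, and $\mathbb{C}[\Gr(2,\pm \infty)]$ is the directed union of these. Sending each initial cluster variable to its Pl\"ucker coordinate gives a well-defined homomorphism, since any four indices occurring in a short Pl\"ucker relation lie in a common window where the finite type $A$ cluster structure of~\cite{FZ2} makes that relation hold; surjectivity onto the relevant (sub)ring is the reachability of the previous paragraph. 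Injectivity is the main obstacle, as this is precisely where one must know that no relations beyond the Pl\"ucker relations hold among the cluster variables. I would obtain it by the same windowing: write $\mathcal{A}_{Q_{\mathfrak{t}}}$ as the directed union, over windows, of the finite type $A$ cluster algebras they generate, identify each with $\mathbb{C}[\Gr(2,N)]$ by the finite-rank isomorphism of~\cite{FZ2}, and note that these identifications are compatible with the window inclusions, so that the colimit map is an isomorphism. The fountain and split-fountain subalgebras are handled by restricting the windows to one side of $k$, respectively to the three ranges $i<j\leq l$, $l\leq i<j\leq r$ and $r\leq i<j$, with the frozen coefficients $\Delta^{i,i+1}$ and $\Delta^{lr}$ playing in each case the role of the boundary sides of the finite sub-gons.
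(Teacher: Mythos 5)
Your proposal is correct and follows essentially the same route as the paper's own (much terser) proof: the arc--Pl\"ucker dictionary, the identification of the exchange relation at a flipped diagonal with a short Pl\"ucker relation, and the determination of the generating sets by observing that finite sequences of flips stay within finite sub-gons and cannot cross a fountain. The only substantive addition is your final windowing argument identifying $\mathcal{A}_{Q_{\mathfrak{t}}}$ with the subring (rather than just mapping onto it), a point the paper passes over in silence; that refinement is sound and arguably makes the argument more complete than the original.
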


\begin{proof}
The existence of the cluster algebra structures follows directly from the 1-1 correspondence of arcs in the $\infty$-gon and Pl\"ucker coordinates in $\mathbb{C}[\Gr(2, \pm \infty)]$ and the compatibility of diagonal flips with short Pl\"{u}cker relations.  The claims on generating sets follow from consideration of the sets of arcs that may be obtained from an initial triangulation by finite sequences of mutations.  For example, one clearly cannot obtain arcs crossing a fountain but, by viewing any finite part of a triangulation of the $\infty$-gon as a triangulation of an $n$-gon for some suitable $n$, clearly one may obtain all other arcs as in the finite case. 
\end{proof}

An avenue we intend to explore further is that of the higher doubly-infinite Grassmannians $\mathbb{C}[\Gr(k,\pm \infty)]$, to give an infinite generalisation of the work of Scott (\cite{Scott}).  In particular, one would like an infinite version of the combinatorics of Postnikov diagrams.


\section{Quantum cluster algebra structures from triangulations of the \texorpdfstring{$\infty$}{infinity}-gon}

In line with \cite{GL}, we can show that the cluster algebra structures obtained from triangulations of the $\infty$-gon have quantum analogues on subalgebras of a suitable quantum analogue of $\mathbb{C}[\Gr(2,\pm \infty)]$.  By a well-known result of Leclerc and Zelevinsky (\cite[Theorem 1.1]{LZ}) we may see that a set of quantum Pl\"ucker coordinates corresponding to a triangulation of the $\infty$-gon together with the quantum Pl\"ucker coordinates corresponding to the sides form a maximal set of quasi-commuting variables.  This is because quasi-commuting of a pair of variables is a ``local'' claim, involving only four specific indices, so that the formul\ae\ of \cite{LZ} that are stated in the finite setting may be applied; we give more details below.  Then initial seeds corresponding to triangulations are eligible to be quantum clusters provided the compatibility condition holds.  In order to obtain quantum cluster algebra structures we need to verify this compatibility and also check that the quantum cluster variables obtained by mutation are again elements of the quantum coordinate ring.  (It will turn out that they are in fact quantum Pl\"{u}cker coordinates.)

To define the two-row doubly-infinite quantum matrices and doubly-infinite quantum Grassmannian, we follow the same pattern as set out in Section~\ref{ss:QCAs}.

\begin{defn} The (two-row) doubly-infinite quantum matrix algebra $\mathbb{C}_{q}[M(2,\pm \infty)]$ is the $\mathbb{C}$-algebra generated by the set $\{ X_{ij} \mid 1\leq i \leq 2,\ j\in \ZZ \}$ subject to the quantum matrix relations as given in Section~\ref{ss:QCAs}, where the second indices may now take any integer values.
\end{defn}

\begin{defn} The (two-row) doubly-infinite quantum Grassmannian $\mathbb{C}_{q}[\Gr(2,\pm \infty)]$ is the subalgebra of the (two-row) doubly-infinite quantum matrix algebra $\mathbb{C}_{q}[M(2,\pm \infty)]$ generated by the quantum Pl\"{u}cker coordinates $\Delta_{q}^{ij}=X_{1i}X_{2j}-qX_{1j}X_{2i}$ for all $i,j \in \ZZ$ such that $i<j$.
\end{defn}

Note that $\mathbb{C}_{q}[\Gr(2,\pm \infty)]$ is a domain since the finite quantum Grassmannians are.

We next look at the initial data.  Each triangulation yields an infinite exchange matrix $B$ corresponding to the exchange quiver we computed in Section \ref{Cluster algebra structures from triangulations}. Let $(i,j)$ be a mutable arc in the triangulation and let $(m,n)$ be an arc or a side. Then it follows directly from the construction of the exchange quiver that the matrix $B$ has entries

\begin{align*}
B_{(m,n),(i,j)} & = 
\begin{cases} 
1 &	\text{ { \small if $(m,n)$ is the minimal arc passing over $(i,j)$ to the left or}} \\
 &	\text{ {\small $(i,j)$ is the minimal arc passing over $(m,n)$ to the right}} \\
-1 & \text{ {\small if $(m,n)$ is the minimal arc passing over $(i,j)$ to the right or}} \\
 & \text{ {\small $(i,j)$ is the minimal arc passing over $(m,n)$ to the left,}} \\
0 & \text{ {\small otherwise.} } \\
\end{cases}
\end{align*}

Note that if an arc $(l,r)$ is not mutable, such as can happen in the split fountain case, we treat it as a frozen vertex, so there is no column in $B$ corresponding to $(l,r)$, only a row.

Let $L$ be the infinite matrix consisting of entries $L_{(i,j),(k,l)}$ for any arcs or sides $(i,j)$ and $(k,l)$ in the triangulation, where
\begin{displaymath}
\Delta_q^{ij}\Delta_q^{kl} = q^{L_{(i,j),(k,l)}}\Delta_q^{kl}\Delta_q^{ij}.
\end{displaymath}
As noted above, we can compute the entries using Theorem 1.1 of \cite{LZ} and we obtain that
\begin{align*}
L_{(i,j),(k,l)} & = 
\begin{cases} 
0 &	\text{if } i = k < j = l \text{ or } i < k < l < j \text{ or } k < i < j < l\\
1 & \text{if } i < k < j = l \text{ or } i = k < j < l \text{ or } k < l = i < j\\
-1 &\text{if } k < i < j = l \text{ or } i = k < l < j \text{ or } i < j = k < l\\
2 &\text{if } i < j < k < l \\
-2 &\text{if } k < l < i < j. \\
\end{cases}
\end{align*}

Recall from Section~\ref{ss:QCAs} that the defining data for a quantum cluster algebra are required to satisfy a compatibility condition: we verify that this indeed holds.

\begin{prop}\label{compatible}
The pair of matrices $(B,L)$ is compatible.
\end{prop}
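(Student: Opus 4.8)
The plan is to reduce the entire verification to the already-established compatibility in the finite-rank setting. The condition to be checked is that $(B^{T}L)_{(i,j),(k,l)} = d\,\delta_{(i,j),(k,l)}$ for every mutable arc $(i,j)$, every arc or side $(k,l)$, and a single positive integer $d$. The first observation is that, for a fixed mutable $(i,j)$, this is a \emph{finite} sum: by the construction of the exchange quiver in Section~\ref{Cluster algebra structures from triangulations}, the only arcs or sides $(m,n)$ with $B_{(m,n),(i,j)}\neq 0$ are the remaining edges of the (exactly two) triangles of $\mathfrak{t}$ having $(i,j)$ as an edge, of which there are at most four. Hence
\[
(B^{T}L)_{(i,j),(k,l)} = \sum_{(m,n)} B_{(m,n),(i,j)}\,L_{(m,n),(k,l)}
\]
is well defined and involves at most four nonzero terms.

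The decisive observation is that both matrices are \emph{local}: the sign $B_{(m,n),(i,j)}$ depends only on the two triangles adjacent to $(i,j)$, while each entry $L_{(m,n),(k,l)}$ depends only on the relative order of the four integers $m,n,k,l$ on the line. I would exploit this as follows. Fix a mutable $(i,j)$ and any $(k,l)$. The arc $(i,j)$, the two triangles of $\mathfrak{t}$ adjacent to it, and the arc or side $(k,l)$ together involve only finitely many integers; choose an interval $\{x,x+1,\dots,y\}$ containing all of them and complete the two fixed triangles to a triangulation $\mathfrak{t}'$ of the $(y-x+1)$-gon on these vertices. Since a diagonal of a triangulation meets exactly two triangles, the arc $(i,j)$ has the \emph{same} two adjacent triangles in $\mathfrak{t}'$ as in $\mathfrak{t}$, so the $(i,j)$-column of the finite exchange matrix $B^{\mathrm{fin}}$ agrees with that of $B$; and because $L$ and the finite quasi-commutation matrix $L^{\mathrm{fin}}$ are given by the identical position-dependent rule of \cite{LZ}, the entries $L^{\mathrm{fin}}_{(m,n),(k,l)}$ agree with $L_{(m,n),(k,l)}$ on the finitely many relevant pairs. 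Consequently $(B^{T}L)_{(i,j),(k,l)} = (B^{\mathrm{fin},T}L^{\mathrm{fin}})_{(i,j),(k,l)}$.

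Now I would invoke the finite-rank result: in \cite{GL} it is shown that $\mathbb{C}_{q}[\Gr(2,n+3)]$ carries a quantum cluster algebra structure, which is precisely the statement that the pair $(B^{\mathrm{fin}},L^{\mathrm{fin}})$ arising from a triangulation of a finite polygon is compatible, with a constant $d$ that is independent of the size of the polygon. Thus the right-hand side above equals $d\,\delta_{(i,j),(k,l)}$, and since one and the same $d$ occurs for every finite polygon it serves for the whole infinite matrix, establishing compatibility; a single small example, such as the pentagon, already pins down the value of $d$.

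The step I expect to require the most care is confirming that the embedding into a finite polygon is genuinely faithful on the data entering the sum: completing $\mathfrak{t}$'s two triangles around $(i,j)$ to a finite triangulation must neither create nor destroy any arrow at $(i,j)$ and must leave every relevant $L$-entry unchanged, which is exactly where the locality of both $B$ and $L$ is needed. The fountain and split-fountain cases deserve a brief separate remark: when $(i,j)$ and $(k,l)$ lie in different connected components of $Q_{\mathfrak{t}}$ there is no arc of $\mathfrak{t}$ covering both, so one cannot place them in a common \emph{sub}triangulation of $\mathfrak{t}$; however the embedding above only retains the two triangles at $(i,j)$ and inserts $(k,l)$ arbitrarily, so the same finite comparison applies and forces the off-diagonal entry to vanish. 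Finally, one notes that non-mutable arcs such as $(l,r)$ in the split-fountain case contribute a row but no column to $B$, so they impose no compatibility condition and need no further treatment.
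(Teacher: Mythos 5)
Your proposal is correct, but it takes a genuinely different route from the paper. The paper's own proof is a direct computation: it observes (as you do) that the only nonzero entries in the $(i,j)$-column of $B$ come from the four sides of the unique quadrilateral having $(i,j)$ as a diagonal, writes $(B^TL)_{(i,j),(k,l)}$ explicitly as a signed sum of four $L$-entries, and then finishes by inserting the tabulated values of $L$ for all possible relative positions of $(k,l)$ with respect to that quadrilateral. You instead outsource that final case analysis to the finite-rank result: you embed the quadrilateral at $(i,j)$ together with $(k,l)$ into a triangulation of a finite polygon and invoke the compatibility of $(B^{\mathrm{fin}},L^{\mathrm{fin}})$ from \cite{GL}. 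Both arguments rest on the same locality facts --- the $(i,j)$-column of $B$ sees only the two adjacent triangles, and each $L$-entry depends only on the relative order of four integers --- and you correctly identify the faithfulness of the embedding as the point needing care; since any finite non-crossing configuration completes to a triangulation of a polygon and a diagonal of such a triangulation meets exactly two triangles, that step does go through, including across connected components in the fountain and split-fountain cases. What your route buys is the elimination of the explicit enumeration of relative positions; what it costs is a slightly heavier dependence on the literature: you need that \cite{GL} (together with the mutation-invariance of compatibility from \cite{BZ}, since \cite{GL} verifies one initial seed and finite type $A$ is mutation-connected) yields compatibility for an \emph{arbitrary} triangulation of the $(n+3)$-gon with the same constant $d=2$ for every $n$. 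The paper's self-contained computation avoids that dependence at the price of a routine but unilluminating case check. Either way the conclusion $(B^TL)_{(i,j),(k,l)}=2\delta_{(i,j),(k,l)}$ is reached, and your remark that non-mutable arcs such as $(l,r)$ contribute no column and hence no condition matches the paper's treatment.
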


\begin{proof}
Let $\T$ be the triangulation of the $\infty$-gon, $B$ the corresponding exchange matrix and $L$ the matrix encoding the quasi-commutation relations. We show that for $(i,j)$ a mutable arc and $(k,l)$ an arc or a side in $\T$ we have $(B^TL)_{(i,j),(k,l)} = 2 \delta_{(i,j),(k,l)}$.  Then
\begin{align*}
(B^TL)_{(i,j),(k,l)} 	&= \sum_{(x,y) \in \T}B^T_{(i,j),(x,y)}L_{(x,y),(k,l)} \\
					&= \sum_{(x,y) \in \T}B_{(x,y),(i,j)}L_{(x,y),(k,l)}. 
\end{align*}
The entry $B_{(x,y),(i,j)}$ is non-zero if and only if either $(x,y)$ is the minimal arc passing over $(i,j)$ or vice versa. Furthermore, $(i,j)$ is the diagonal in a unique quadrilateral $(m,i,n,j)$ or $(i,m,j,n)$. In the matrix $B$, only the rows corresponding to the sides and the other diagonal in this quadrilateral have non-zero entries in the row corresponding to the arc $(i,j)$. 

\vfill
\pagebreak
Then we have
\begin{displaymath}
(B^TL)_{(i,j),(k,l)} = L_{(i,m),(k,l)} + L_{(j,n),(k,l)} - L_{(m,j),(k,l)} - L_{(i,n),(k,l)}
\end{displaymath}
if $i < m < j < n$
and
\begin{displaymath}
(B^TL)_{(i,j),(k,l)} = L_{(m,j),(k,l)} + L_{(i,n),(k,l)}  - L_{(n,j),(k,l)}
\end{displaymath}
if $m < i < n < j$.
Inserting the entries of $L$ for all possible respective positions of the arcs $(i,j)$ and $(k,l)$ yields the desired result.
\end{proof}

Therefore for a given triangulation $\T$, the triple $(\{\Delta_q^{ij}\}_{(i,j) \in \T},B, L)$ is a quantum seed and gives rise to a quantum cluster algebra structure, not necessarily on $\mathbb{C}_q[\Gr(2, \pm \infty)]$  itself but possibly on some localization of this.  If we can show that each quantum cluster variable is a quantum Pl\"ucker coordinate, this gives us a quantum cluster algebra structure on (a subalgebra of) $\mathbb{C}_q[\Gr(2, \pm \infty)]$.  (This is more than is needed to show that we do not need to localize and this is a feature of being in the $k=2$ case.  As shown in \cite{GL}, for $k>2$ the quantum cluster variables need not be Pl\"{u}cker coordinates.)

\begin{prop}\label{quantum coordinate}
Let $\T$ be a triangulation of the $\infty$-gon and let $(i,j)$ be an arc in $\T$. Then the variable $\mu(\Delta_q^{ij})$ obtained by mutation of the corresponding quantum cluster at $(i,j)$ is a quantum Pl\"ucker coordinate.
\end{prop}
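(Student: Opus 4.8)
The plan is to identify the flip of $(i,j)$ with the other diagonal of the quadrilateral containing it and to show that the quantum exchange relation reproduces the corresponding short quantum Pl\"ucker relation, so that $\mu(\Delta_q^{ij})$ is the quantum Pl\"ucker coordinate of the flipped diagonal. As recorded in the proof of Proposition~\ref{compatible}, the arc $(i,j)$ is the diagonal of a unique quadrilateral whose four vertices, listed in increasing order, are either $i<m<j<n$ or $m<i<n<j$; by the reflection symmetry of the $\infty$-gon it suffices to treat the first case. The two triangles meeting along $(i,j)$ are then $(i,m,j)$ and $(i,j,n)$, and applying the clockwise convention of Definition~\ref{exchange quiver} to each, the arrows incident with $(i,j)$ are exactly $(i,m)\to(i,j)$, $(j,n)\to(i,j)$, $(i,j)\to(m,j)$ and $(i,j)\to(i,n)$. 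In particular the only columns of $B$ that enter the exchange relation at $(i,j)$ are those of the four edges of the quadrilateral, and the only entries of $L$ that occur are those between arcs and sides with all indices in $\{i,m,j,n\}$.

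The essential point is that this renders the whole computation local. The two exchange monomials $M(\cdot)$ depend only on the restriction of $B$ and $L$ to the finitely many arcs and sides supported on $\{i,m,j,n\}$, and this restricted data is literally the data of a finite quantum Grassmannian: realising $i,m,j,n$ as four of the marked points of an $(N+3)$-gon, the pair $(B,L)$ restricted to the quadrilateral coincides with that computed for $\mathbb{C}_q[\Gr(2,N+3)]$ in \cite{GL}. Hence the mutated variable $\mu(\Delta_q^{ij})$ computed here agrees with the one computed in that finite setting, which by \cite{GL} is the quantum Pl\"ucker coordinate $\Delta_q^{mn}$ of the flipped diagonal.

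To make the identification explicit I would read the two terms of the exchange relation off the incoming and outgoing arrows: up to the powers of $q$ dictated by the monomials $M$ these are $\Delta_q^{im}\Delta_q^{jn}(\Delta_q^{ij})^{-1}$ and $\Delta_q^{in}\Delta_q^{mj}(\Delta_q^{ij})^{-1}$, and comparing with the short quantum Pl\"ucker relation
$$
\Delta_q^{ij}\Delta_q^{mn} = q^{-1}\Delta_q^{im}\Delta_q^{jn} + q\Delta_q^{in}\Delta_q^{mj}
$$
shows that their sum is $\Delta_q^{mn}$. Since $i<m<j<n$ forces $n-m\geq 2$, the pair $(m,n)$ is a genuine arc, so $\Delta_q^{mn}$ is a bona fide quantum Pl\"ucker coordinate lying in $\mathbb{C}_q[\Gr(2,\pm\infty)]$.

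The step I expect to be the main obstacle is verifying that the powers of $q$ generated by the monomials $M$, which are governed by the entries of $L$ listed before Proposition~\ref{compatible}, match exactly the coefficients $q^{-1}$ and $q$ of the short quantum Pl\"ucker relation; this is a finite check of the same nature as in \cite{GL}, and the locality reduction above is precisely what guarantees that such a finite verification is sufficient in the infinite-rank setting.
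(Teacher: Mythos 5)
Your proposal is correct and follows essentially the same route as the paper: both reduce to the unique quadrilateral $(i,m,j,n)$ containing $(i,j)$ as a diagonal, compute the quantum exchange relation from the local entries of $B$ and $L$, and identify the result with $\Delta_q^{mn}$ via the short quantum Pl\"ucker relation (the paper, like you, notes the argument follows the lines of \cite{GL}). The only difference is that the paper carries out the $q$-power bookkeeping explicitly and then cancels $\Delta_q^{ij}$ using that $\mathbb{C}_q[\Gr(2,\pm\infty)]$ is a domain, whereas you defer that finite verification to the locality reduction --- which is a legitimate shortcut, since the restricted $(B,L)$ data genuinely coincides with the finite-rank data of \cite{GL}.
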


\begin{proof}
The following proof follows the same lines as the corresponding result in \cite{GL}; the above combinatorial descriptions of $B$ and $L$ allow some slight simplification.  Let $(i,j)$ be the diagonal in the quadrilateral $(i,m,j,n)$ with $i < m < j < n$.
\begin{center}
\begin{tikzpicture}[scale=.75, transform shape]
\tikzstyle{every node}=[font=\small]
\draw (-5.5,0) -- (1.5,0);
\draw (1,0.1) -- (1,-0.1) node[below]{$n$};
\draw (-1,0.1) -- (-1,-0.1) node[below]{$j$};
\draw (-5,0.1) -- (-5,-0.1) node[below]{$i$};
\draw (-3,0.1) -- (-3,-0.1) node[below]{$m$};

\path (-5,0) edge [out= 60, in= 120] (-3,0);
\path (-1,0) edge [out= 60, in= 120] (1,0);
\path (-3,0) edge [out= 60, in= 120] (-1,0);
\path (-5,0) edge [out= 60, in= 120] (1,0);
\path (-5,0) edge [out= 60, in= 120] (-1,0);
\end{tikzpicture}
\end{center}
The case where $m<i<n<j$ follows from reflection. We compute the mutation of $\Delta_{q}^{ij}$, $\mu(\Delta_q^{ij})$, in the above context according to the quantum exchange relation (as described in Section~\ref{ss:QCAs}, but also indexing the standard basis vectors appearing in the monomials $M$ by integer pairs, as we have for $B$ and $L$).  
\begin{align*}
\mu(\Delta_q^{ij}) 	& = M(-e_{(i,j)} + \sum_{B_{(k,l),(i,j)}>0}B_{(k,l),(i,j)}e_{(k,l)}) + M(-e_{(i,j)} - \sum_{B_{(k,l),(i,j)}<0}B_{(k,l),(i,j)}e_{(k,l)}) \\
					& = M(-e_{(i,j)} + 1 \cdot e_{(i,m)} + 1 \cdot e_{(j,n)}) + M(-e_{(i,j)} - (-1)\cdot e_{(m,j)} - (-1)\cdot e_{(i,n)}) \\
					& = q^{\frac{1}{2}(1 \cdot (-1)\cdot L_{(i,j),(i,m)} + 1 \cdot 1\cdot L_{(j,n),(i,m)} + (-1) \cdot 1\cdot L_{(j,n),(i,j)})} \Delta_q^{im}(\Delta_q^{ij})^{-1}\Delta_q^{jn} \\
					& \qquad + q^{\frac{1}{2}((-1) \cdot 1\cdot L_{(i,n),(i,j)} + (-1) \cdot 1\cdot L_{(m,j),(i,j)} + 1 \cdot 1\cdot L_{(m,j),(i,n)})}(\Delta_q^{ij})^{-1}\Delta_q^{in}\Delta_q^{mj} \\
					& = q^{\frac{1}{2}((-1) \cdot (-1) + 1 \cdot (-2) + (-1) \cdot (-1)} \Delta_q^{im}(\Delta_q^{ij})^{-1}\Delta_q^{jn} \\
					& \qquad + q^{\frac{1}{2}((-1) \cdot (-1) + (-1) \cdot (-1) + 1 \cdot 0)}(\Delta_q^{ij})^{-1}\Delta_q^{in}\Delta_q^{mj}	\\						 	& = \Delta_q^{im}(\Delta_q^{ij})^{-1}\Delta_q^{jn} + q(\Delta_q^{ij})^{-1}\Delta_q^{in}\Delta_q^{mj}\\
					& = q^{-1}(\Delta_q^{ij})^{-1}\Delta_q^{im}\Delta_q^{jn} + q(\Delta_q^{ij})^{-1}\Delta_q^{in}\Delta_q^{mj}
\end{align*}
By the quantum Pl\"ucker relations we have
\begin{align*}
\mu(\Delta_q^{ij}) \Delta_q^{ij} & = q^{-1}\Delta_q^{im}\Delta_q^{jn} + q\Delta_q^{in}\Delta_q^{mj} \\
								& = \Delta_q^{mn} \Delta_q^{ij},
\end{align*}
and therefore, as $\mathbb{C}_{q}[\Gr(2,\pm \infty)]$ is a domain, we deduce that $\mu(\Delta_q^{ij}) = \Delta_q^{mn}$ and the former is indeed a quantum Pl\"ucker coordinate.
\end{proof}

Propositions \ref{compatible} and \ref{quantum coordinate} prove the following facts. 

\begin{cor} {\ }
\begin{itemize}
\item{A locally finite triangulation yields a quantum cluster algebra structure on the whole doubly-infinite quantum Grassmannian $\mathbb{C}_q[\Gr(2, \pm \infty)]$,  where we view $\Delta_{q}^{i,i+1}$ for $i \in \mathbb{Z}$ as coefficients.}
\item{A triangulation with a fountain $k$ yields a quantum cluster algebra structure on the \\ subalgebra of $\mathbb{C}_q[\Gr(2, \pm \infty)]$ generated by the quantum Pl\"ucker coordinates \hfill \hfill \linebreak $\{\Delta_q^{ij} \mid i < j \leq k \text{ or } k \leq i < j\}$,  where we view $\Delta_{q}^{i,i+1}$ for $i \in \mathbb{Z}$ as coefficients.}
\item{A triangulation with a split fountain with left-fountain at $l$ and right-fountain at $r$ yields a quantum cluster algebra structure on the subalgebra of $\mathbb{C}_q[\Gr(2, \pm \infty)]$ generated by the quantum Pl\"ucker coordinates $\{\Delta_q^{ij} \mid i < j \leq l \text{ or } l \leq i < j \leq r \text{ or } r \leq i < j\}$, where we view $\Delta_{q}^{i,i+1}$ for $i \in \mathbb{Z}$ as well as $\Delta_{q}^{lr}$ as coefficients.}
\end{itemize}

\end{cor}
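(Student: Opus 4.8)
The plan is to assemble the Corollary directly from Propositions~\ref{compatible} and~\ref{quantum coordinate}, together with the combinatorial reachability argument already used for the classical Theorem~\ref{t:CAonInfGr}. First I would invoke Proposition~\ref{compatible}: for each triangulation $\T$ the triple $(\{\Delta_q^{ij}\}_{(i,j)\in\T}, B, L)$ satisfies the compatibility condition $(B^TL)_{(i,j),(k,l)} = 2\delta_{(i,j),(k,l)}$, hence is a genuine quantum seed in the infinite-rank sense of Definition~\ref{cluster algebras of infinite rank}. Here the finiteness of the arrows incident with each vertex, guaranteed by local finiteness or the fountain structure, is what ensures the exchange monomials $M$ are well-defined so that mutation makes sense. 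At this stage one obtains a quantum cluster algebra structure, a priori only on some localization of $\mathbb{C}_q[\Gr(2,\pm \infty)]$.

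Next I would use Proposition~\ref{quantum coordinate} to remove the need for any localization. That proposition shows that mutating the quantum seed of a triangulation at a mutable arc $(i,j)$ replaces $\Delta_q^{ij}$ by the quantum Pl\"ucker coordinate $\Delta_q^{mn}$ of the flipped diagonal. Since the resulting seed is, up to relabelling of vertices by arcs, the quantum seed of the flipped triangulation, Proposition~\ref{quantum coordinate} applies again, and an induction on the length of the mutation sequence shows that every quantum cluster variable reachable by a finite sequence of mutations is a quantum Pl\"ucker coordinate, and therefore lies in $\mathbb{C}_q[\Gr(2,\pm \infty)]$ itself.

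The identification of the precise generating set is then purely combinatorial and proceeds exactly as for Theorem~\ref{t:CAonInfGr}. A finite mutation sequence is a finite sequence of diagonal flips, taking place inside some finite window of the $\infty$-gon; viewing that window as an $n$-gon, one reaches all arcs in the region determined by the finite-type case. For a locally finite triangulation every arc is reachable, yielding the whole ring $\mathbb{C}_q[\Gr(2,\pm \infty)]$; for a fountain at $k$ no flip can produce an arc crossing $k$, giving exactly $\{\Delta_q^{ij}\mid i<j\leq k \text{ or } k\leq i<j\}$; and for a split fountain the frozen arc $(l,r)$ likewise cannot be crossed, isolating the three claimed regions. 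In each case the sides $\Delta_q^{i,i+1}$ (and $\Delta_q^{lr}$ in the split case) are the frozen variables, i.e.\ the coefficients.

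The step I expect to require the most care is the inductive re-application of Proposition~\ref{quantum coordinate}, which tacitly demands that the quasi-commutation data behave consistently under mutation: one must check that the Berenstein--Zelevinsky mutation $\mu_{(i,j)}(L)$ of the quasi-commutation matrix of $\T$ agrees with the Leclerc--Zelevinsky matrix $L$ of the flipped triangulation $\T'$. The exchange-matrix part is immediate, since Fomin--Zelevinsky quiver mutation realises the diagonal flip; the $L$-part follows because both matrices are computed locally from four indices via \cite{LZ}, so the two recipes necessarily coincide on the flipped seed, guaranteeing that $\Delta_q^{mn}$ quasi-commutes with the remaining coordinates precisely as predicted by $\mu_{(i,j)}(L)$. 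Once this consistency is established the induction closes and the three generating statements follow.
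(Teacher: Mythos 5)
Your proposal is correct and follows essentially the same route as the paper, whose entire proof of this corollary is the single sentence ``Propositions \ref{compatible} and \ref{quantum coordinate} prove the following facts,'' with the identification of the generating subalgebras implicitly inherited from the argument for Theorem~\ref{t:CAonInfGr}. You are in fact somewhat more careful than the paper: the inductive re-application of Proposition~\ref{quantum coordinate} and the check that the Berenstein--Zelevinsky mutation of $L$ agrees with the Leclerc--Zelevinsky quasi-commutation matrix of the flipped triangulation are both left tacit in the paper, and your making them explicit is a genuine (and correct) completion of the intended argument rather than a deviation from it.
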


Note that this provides the generalisation to the infinite rank case of the properties of the $n$-gon model for type $A$ cluster algebras and Grassmannians.  Namely, in the $k=2$ situation we are considering, every (quantum) cluster variable corresponds to an arc and to a (quantum) Pl\"{u}cker coordinate and every cluster corresponds to a triangulation.  That is, a maximal set of non-intersecting arcs corresponds to a maximal quasi-commuting set of quantum minors, whose indices form a maximal weakly separated set in the sense of Leclerc and Zelevinsky (\cite{LZ}).

Again, we expect that similar results hold for the higher doubly-infinite Grassmannians $\mathbb{C}_{q}[\Gr(k,\pm \infty)]$.

\appendix

\section{Grassmannians for infinite-dimensional vector spaces,\\ by Michael Groechenig}
In the following we fix an arbitrary field $k$ which will be the base field over which we perform all of our constructions. Nonetheless, to avoid awkward language we suppress the field from the notation; the reader should keep in mind that vector space will refer to $k$-vector space, algebra to $k$-algebra, scheme to $k$-scheme and so on.

Traditionally, one deals with infinite-dimensional objects arising in an algebraic framework by evoking the theory of ind-varieties. To a directed system of classical varieties $$\cdots \hookrightarrow V_i \subset V_{i+1} \hookrightarrow \cdots,$$ where the inclusions are closed immersions, one assigns the formal object $$V:=\bigcup_{i \in \mathbb{N}} V_i.$$
Here, varieties are understood to be reduced separated schemes of finite type. If the dimension of the $V_i$ is increasing, we think of $V$ as an infinite-dimensional geometric object. An important example of an ind-variety is given by ind-affine space $\mathbb{A}^{\mathrm{ind}}$. In order to introduce this ind-variety, one stipulates $V_i$ to be $\mathbb{A}^i := \Spec k[x_0,\dots,x_{i-1}]$, and $V_i \hookrightarrow V_{i+1}$ the map corresponding to the surjection of rings $k[x_0,\dots,x_i] \twoheadrightarrow k[x_0,\dots,x_{i-1}],$ which sends $x_j \mapsto x_j$ for $j \leq i-1$ and $x_i \mapsto 0$.

A regular function $V \to \mathbb{A}^{1}$ is given by a compatible system of regular functions $V_i \to \mathbb{A}^1$. We conclude that the coordinate ring $k[V]$ of $V$, i.e.\ the ring of regular functions, arises as an inverse limit along the maps $$\cdots \twoheadleftarrow k[V_{i+1}] \twoheadleftarrow k[V_i] \twoheadleftarrow \cdots.$$ In particular, $k[V]$ is a canonical way a topological ring.

For ind-affine space one obtains for the coordinate ring a certain topological completion of the polynomial ring in infinitely many generators $$k[\mathbb{A}^{\mathrm{ind}}] = \widehat{k[x_i \mid i \in \mathbb{N}]},$$ which also contains the infinite sum $\sum_{i=0}^{\infty} x_i.$
From a purely algebraic viewpoint it might seem more natural to define infinite-dimensional affine space $\mathbb{A}^{\infty}$ to be the affine scheme $\Spec k[x_i|i \in \mathbb{N}]$ associated to the polynomial ring in countably many generators. The coordinate ring of this object is by definition $$k[\mathbb{A}^{\infty}] = k[x_i \mid i \in \mathbb{N}].$$ 

In this appendix we will outline a scheme-theoretic alternative to the traditional ind-variety approach to infinite-dimensional Grassmannians. The homogeneous coordinate ring of the infinite-dimensional Grassmannian we construct will be the home for the cluster algebra structures described by the authors of the present article, and not a topological completion thereof.

\subsection{Grassmannians for finite-dimensional vector spaces}

Following \cite{Gro95} we adopt a functorial approach to Grassmannians. To a scheme $X$ we associate the functor from (commutative) algebras to sets $$X(-)\colon \Alg \to \Set.$$ For an algebra $A$ we define $X(A)$ to be the set of maps $\Spec A \to X$ and refer to it as the set of $A$-points of $X$. Vice versa, given such a functor, one might wonder whether it is represented by a scheme.
 
Let $V$ be a finite-dimensional vector space. The Grassmannian $\GrV$ is a scheme whose $k$-points $\GrV(k)$ are given by $n$-dimensional subspaces $U \subset V$. Dualizing this set-up we can also think of these subspaces in terms of quotients of the dual space $$V^{\vee} \twoheadrightarrow U^{\vee}.$$ We will denote the corresponding geometric object by $\Grd$; its $k$-points are understood to be $n$-dimensional quotients of $V^{\vee}$. But as we can always dualize we expect to have a natural isomorphism $$\GrV \cong \Grd.$$ In the definition below we define the functor corresponding to $\Grd$, and use the above isomorphism as a definition for $\GrV$.

\begin{defn}\label{defi:gr}
Let $V^{\vee}$ be a vector space.  We denote by $\Grd(A)$ the set of locally free $A$-modules $W$ of rank $n$ together with a surjection $$A \otimes V^{\vee} \twoheadrightarrow W.$$ The corresponding functor will be denoted by $$\GrV = \Grd: \Alg \to \Set.$$

If $n = 1$ we will denote the functor $\Grd$ also by $\Pd$.
\end{defn}

Note that it is not true that $\GrV(A)$ is the set of locally free subspaces $U$ of rank $n$ of $A \otimes V$. By dualizing one only obtains the subspaces $U$ such that the quotient $A \otimes V / U$ is locally free itself. For that reason it is slightly more convenient to define the functor $\GrV$ as above in terms of $\Grd$.

It is a well-known fact that $\GrV$ is a scheme, indeed this is a special case of Grothendieck's representability result for Quot-schemes (\cite[Theorem~3.1]{Gro95}). Nonetheless in this specific situation an easier proof can be given (which is again well-known); the proof which we present in the next subsection for an infinite-dimensional $V$ is merely a straightforward generalization of the known argument.

\subsection{The infinite-dimensional case}

We denote by $V^{\vee}$ a vector space of arbitrary possibly infinite dimension. Since $V^{\vee}$ can be realized as an ind-object in the category of finite-dimensional vector spaces, its dual $V$ should be naturally thought of as a pro-object in the category of finite-dimensional vector spaces. Alternatively one can view $V$ as a topological vector space by endowing it with the inverse limit topology. It is then true that $V^{\vee}$ is the topological dual of $V$. In particular we remind the reader that $n$-dimensional subspaces of $V$ correspond to $n$-dimensional quotients of $V^{\vee}$ by dualizing. 

Every pro-finite dimensional vector space $V$ gives rise to an affine scheme $$\mathbb{A}(V) := \Spec \Sym V^{\vee},$$ which represents the functor $\mathbb{A}(V)(A) := (A \widehat{\otimes} V)^{\mathrm{set}}$; in particular its $k$-points are given by the underlying set of the topological vector space $V$.

The objects $V$ and $V^{\vee}$ are examples of so-called Tate vector spaces (\cite{Dri06}). The category of Tate vector spaces can be defined as a certain subcategory of ind-objects in pro-finite-dimensional vector spaces (\cite{Pre07}).

Since Definition \ref{defi:gr} applies to arbitrary vector spaces we can study next whether the functor $\Grd$ defines a scheme.

\begin{lm}
The functor $\Grd$ is representable by a scheme.
\end{lm}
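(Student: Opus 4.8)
The plan is to mimic the classical gluing argument for finite-dimensional Grassmannians, the only novelty being that the affine charts will now be infinite-dimensional affine schemes, namely $\Spec$ of polynomial rings in infinitely many variables (crucially, \emph{not} completed). Fix a basis $\{e_i\}_{i\in I}$ of $V^{\vee}$. For each $n$-element subset $S\subset I$ let $U_S\subset \Grd$ be the subfunctor consisting of those surjections $\phi\colon A\otimes V^{\vee}\twoheadrightarrow W$ for which the restriction $\phi_S\colon A\otimes\langle e_i : i\in S\rangle \to W$ to the corresponding coordinate subspace is an isomorphism. I would then carry out the argument in four steps: (i) each $U_S$ is representable by an affine scheme; (ii) each inclusion $U_S\hookrightarrow \Grd$ is an open immersion of functors; (iii) the $U_S$ cover $\Grd$; and (iv) $\Grd$ is a Zariski sheaf, so that the standard representability criterion applies.

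For (i), on $U_S$ the quotient $W$ is free of rank $n$ with basis the images $\phi(e_i)$, $i\in S$, and the datum $\phi$ is determined by the images $\phi(e_j)$ for $j\in I\setminus S$, each of which is an arbitrary $n$-tuple of elements of $A$ expressed in this basis. Hence $U_S(A)\cong A^{n\times(I\setminus S)}$ naturally in $A$, so $U_S$ is represented by the affine scheme $\Spec k[x_{j,r} : j\in I\setminus S,\ 1\le r\le n]$. When $I$ is infinite this is an honest (uncompleted) polynomial ring in infinitely many generators, exactly the kind of affine scheme the appendix insists upon.

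For (ii), given any $\xi\in\Grd(A)$ the map $\phi_S$ is a homomorphism of locally free $A$-modules of equal rank $n$, and such a map is an isomorphism precisely where it is surjective, i.e.\ on the complement of the support of $\cok\phi_S$, equivalently on the locus where the induced homomorphism $\bigwedge^n\phi_S$ of line bundles is an isomorphism. This locus is open, and the resulting inclusion is therefore an open immersion. For (iii), the essential point---and the only place the passage to infinite dimension needs care---is that for a point $p\in\Spec A$ the fibre $W\otimes\kappa(p)$ is an $n$-dimensional quotient of $V^{\vee}\otimes\kappa(p)$; since $\phi$ is surjective the vectors $\phi(e_i)$ span this finite-dimensional space, so some $n$ of them, indexed by a set $S$, form a basis, whence $p\in U_S$. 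Thus finitely many of the infinitely many basis vectors always suffice at each point, and the open subfunctors $U_S$ cover $\Grd$.

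Finally, for (iv) I would verify that $\Grd$ is a sheaf for the Zariski topology---locally free modules and surjections between them glue along a Zariski cover---and then invoke the standard criterion (as in \cite[Theorem~3.1]{Gro95}) that a Zariski sheaf admitting a covering by representable open subfunctors is representable by a scheme, the scheme here being glued from the affine charts $U_S$. The main obstacle is step (iii): one must confirm that the classical covering by the $U_S$ survives when $I$ is infinite, which it does precisely because $W$ has finite rank $n$, so that each fibre is spanned by finitely many of the $\phi(e_i)$. Everything else is a routine transcription of the finite-dimensional argument, with affine spaces simply permitted to be infinite-dimensional.
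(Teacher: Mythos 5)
Your proof is correct and follows essentially the same route as the paper's: both cover $\Grd$ by the subfunctors on which $\phi$ restricts to an isomorphism on an $n$-dimensional coordinate subspace of $V^{\vee}$, identify each such chart with an uncompleted infinite-dimensional affine space, and conclude by gluing. You make explicit the openness, covering, and Zariski-sheaf verifications that the paper's terser argument leaves implicit, but this is elaboration of the same idea rather than a different approach.
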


\begin{proof}
Let $(x_i)_{i \in I}$ be an ordered basis for $V^{\vee}$. We denote by $j$ an ordered $n$-tuple of $(x_i)_{i \in I}$, and by $U_j$ the subspace of $V^{\vee}$ spanned by the corresponding subset of the chosen basis. We consider now an algebra $A$ and an $A$-point $$(\phi: A \otimes V^{\vee} \twoheadrightarrow W) \in \Grd.$$ After refining $\Spec A$ by an affine covering we may assume without loss of generality that $W$ is equivalent to the free $A$-module $A^n$ of rank $n$. If we refine $\Spec A$ for a second time we may assume that there exists a $j$ such that $\phi|_{U_j}$ is an isomorphism. Locally on $\Spec A$ this allows us to compare this given $A$-point of the Grassmannian with the surjection $$\phi_{j,M}: A^I \twoheadrightarrow A^n,$$ which restricts to the identity map on $U_j$, and where $M \in \Hom(A^{I - j},A^I) \cong A^{(I-j) \times I}$ denotes an arbitrary matrix of infinite size. In particular we conclude that the functor $\Grd$ can be covered by the infinite-dimensional affine spaces $\mathbb{A}^{(I-j) \times I}$.
\end{proof}

\subsection{The Pl\"ucker embedding}

There is a natural morphism of schemes (i.e.\ natural transformation of functors) $$p: \Grd \to \Pdd,$$ which sends an $A$-point $\phi: A \otimes V^{\vee} \twoheadrightarrow W$ to $\wedge^n\phi: \bigwedge^nV^{\vee} \twoheadrightarrow \bigwedge^nW.$ There is a natural morphism $$\mathbb{A}(\bigwedge^nV^{\vee}) - 0 \to \Pdd.$$ To see this one observes that an $A$-point of the scheme on the left is given by a certain morphism of rings $\Sym \bigwedge^nV^{\vee} \to A$, which gives rise to a surjection $$A \otimes \bigwedge^nV^{\vee} \to A$$ when tensored with $A$. The image of the base change
\[
\xymatrix{
P \ar[r] \ar[d] & \mathbb{A}(\bigwedge^{\mathit{n}}V^{\vee}) - 0 \ar[d] \\
\Grd \ar[r] & \Pdd
}
\]
can be described by explicit equations, which are derived from the so-called Pl\"ucker relations. We refer to Chapter 1.5 in \cite{GriffithsHarris} for an exposition of the Pl\"ucker embedding for finite-dimensional vector spaces.

Recalling that $V$ is a pro-finite-dimensional vector space, we define $\bigwedge^n V$ and similar constructions in the obvious way, by using an explicit realization of $V$ as an inverse system of finite-dimensional vector spaces. For every $z \in \bigwedge^{n-1} V$ we have a contraction operator $$i(z): \bigwedge^n V^{\vee} \to V,$$ defined by the adjunction $(i(z)x,y) = (z,x \wedge y).$

\begin{lm}
The scheme $P$ is the closed subscheme of $\mathbb{A}(\bigwedge^nV^{\vee}) - 0$ given by the system of equations $$i(z)x \wedge x = 0,$$ where $z$ runs through the elements of $\bigwedge^{n-1} V$. 
\end{lm}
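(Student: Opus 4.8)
The plan is to identify $P$ with the explicitly cut-out locus by comparing the two as closed subschemes of $\mathbb{A}(\bigwedge^n V^\vee) - 0$. First I would record that $P$ really is a closed subscheme: the Pl\"ucker map $p\colon \Grd \to \Pdd$ is a closed immersion (the same argument as in the finite-dimensional case applies), and $\mathbb{A}(\bigwedge^n V^\vee) - 0 \to \Pdd$ is the $\mathbb{G}_m$-torsor trivializing the tautological line, so $P$, being a base change of $p$, is closed in $\mathbb{A}(\bigwedge^n V^\vee) - 0$. Writing $Z$ for the closed subscheme defined by the equations $i(z)x\wedge x = 0$ with $z \in \bigwedge^{n-1}V$, the goal becomes the scheme-theoretic equality $P = Z$, which I would establish on a convenient affine cover.

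Next I would fix the ordered basis $(x_i)_{i\in I}$ of $V^\vee$ used in the representability lemma and, for each ordered $n$-tuple $j$, let $\Delta_j$ be the corresponding Pl\"ucker coordinate, namely the component of $x$ along the line $\bigwedge^n U_j \subseteq \bigwedge^n V^\vee$. The opens $\{\Delta_j \neq 0\}$ cover $\mathbb{A}(\bigwedge^n V^\vee) - 0$, since a point of the latter is precisely one whose coordinates generate the unit ideal; hence it suffices to prove $P \cap \{\Delta_j \neq 0\} = Z \cap \{\Delta_j \neq 0\}$ for every $j$. On such a chart I would normalise $\Delta_j = 1$ and invoke the representability lemma: its chart $\mathbb{A}^{(I-j)\times I}$ identifies a point of $\Grd$ over $\{\Delta_j \neq 0\}$ with an (infinite) matrix $M$, and the remaining Pl\"ucker coordinates are exactly the $n\times n$ minors built from $M$. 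Thus $P\cap\{\Delta_j\neq 0\}$ is the image of this matrix parametrisation, and the content to be verified is that a tuple $(\Delta_{j'})$ with $\Delta_j$ invertible arises as such a family of minors if and only if it satisfies $i(z)x\wedge x = 0$ for all $z$.

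This last equivalence is exactly the classical statement that a nonzero element of an exterior power is decomposable precisely when it satisfies the Pl\"ucker relations, and I would reduce to it dimension by dimension: any relation $i(z)x\wedge x$ involves only finitely many of the coordinates, and any point $x$ lies in $\bigwedge^n U^\vee$ for a finite-dimensional coordinate subspace $U^\vee \subseteq V^\vee$, on which $i(z)$ only sees the image of $z$ in $\bigwedge^{n-1}U$; so the question descends to the finite-dimensional Grassmannian $\Gr(n,U^\vee)$, where it is the Pl\"ucker embedding of Chapter 1.5 of \cite{GriffithsHarris}. The main obstacle I anticipate is making this finite-dimensional reduction uniform over an arbitrary base algebra $A$ rather than over a field: one must check that the a priori infinite matrix $M$ solving the relations genuinely defines an $A$-point of $\Grd$, that is, a surjection onto a locally free rank-$n$ module, and that on each chart the infinitely many equations $i(z)x\wedge x=0$ are generated by the finitely many coming from a single $U^\vee$. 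Both points I would handle by passing to the Zariski sheafification and reducing to local rings, where ``locally free'' becomes ``free'' and the finite-dimensional decomposability criterion applies on the nose, together with the compatibility of $\bigwedge^n(-)$ with the completed tensor product implicit in the definition of $\mathbb{A}(\bigwedge^nV^\vee)$.
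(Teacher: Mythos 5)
Your overall strategy---cover $\mathbb{A}(\bigwedge^n V^\vee)-0$ by the charts $\{\Delta_j\neq 0\}$, use the matrix parametrisation from the representability lemma, and quote the finite-dimensional decomposability criterion of Chapter 1.5 of Griffiths--Harris---is workable and close in spirit to the paper's proof, which also reduces to the finite-dimensional statement (but does so by choosing an ind-presentation $V^\vee=\bigcup_i V_i^\vee$, restricting an $A$-point of the cut-out locus to each $\bigwedge^n V_i^\vee$, obtaining a compatible system of rank-$n$ quotients of the $A\otimes V_i^\vee$, and passing to the colimit). However, there is a genuinely false step in your reduction: the assertion that ``any point $x$ lies in $\bigwedge^n U^\vee$ for a finite-dimensional coordinate subspace $U^\vee\subseteq V^\vee$.'' An $A$-point of $\mathbb{A}(\bigwedge^nV^{\vee})$ is a linear functional on $\bigwedge^n V^\vee$, and such a functional need not factor through any finite-dimensional quotient; concretely, for $V=k[[t,t^{-1}]]$ and $n=2$, the plane spanned by $\sum_i t^{2i}$ and $\sum_i t^{2i+1}$ is a perfectly good $k$-point with infinitely many non-vanishing Pl\"ucker coordinates. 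This is the typical situation rather than a pathology, so you cannot descend to a single finite-dimensional Grassmannian $\Gr(n,U^{\vee})$ containing the point.

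The gap is repairable without changing your architecture, because everything you actually need to verify is finitary even though $x$ is not: each Pl\"ucker relation involves only finitely many coordinates, and the assertion that a fixed coordinate $\Delta_{j'}$ equals the corresponding $n\times n$ minor of $M$ involves only the finitely many columns of $M$ indexed by entries of $j$ and $j'$. So the correct reduction is identity-by-identity (restrict to the finite-dimensional coordinate subspace spanned by the basis vectors occurring in the identity at hand), or equivalently the paper's route of restricting $x$ to each $\bigwedge^n V_i^\vee$ and observing that the resulting compatible system of points of finite-dimensional Grassmannians determines a unique rank-$n$ quotient of $A\otimes V^\vee$. As written, though, the quoted sentence is the load-bearing step of your third paragraph and it is false. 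A minor further remark: your opening claim that $p$ is a closed immersion ``by the same argument as in the finite-dimensional case'' is neither proved nor needed---the closedness of $P$ in $\mathbb{A}(\bigwedge^nV^{\vee})-0$ is a consequence of the identification $P=Z$ you are trying to establish, not a prerequisite for it.
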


\begin{proof}
In Chapter 1.5 of \cite{GriffithsHarris} this statement is verified for a finite-dimensional vector space $V$. In general $V$ is a pro-object in finite-dimensional vector spaces, respectively $V^{\vee}$ is a vector space of arbitrary dimension (i.e.\ an ind-object in finite-dimensional vector spaces). Let us choose an explicit realization $V^{\vee} = \bigcup_{i \in I} V^{\vee}_i$ as ind-object. We see that to an $A$-point of the closed subscheme cut out by the Pl\"ucker relations gives rise to a compatible system of $n$-dimensional quotients of $A \otimes V^{\vee}_i$ (if $i$ is large enough). This gives rise to a unique rank $n$ quotient of $A \otimes V^{\vee}$. 
\end{proof}

In this paper $V$ is the pro-finite-dimensional vector space $k[[t,t^{-1}]]$, with topological basis $\{ t^i \mid i \in \mathbb{Z} \}$. Its topological dual is a discrete vector space with countable basis $\lambda_i$, where $i \in \mathbb{Z}$. The second exterior power $\bigwedge^2k[[t,t^{-1}]])$ has a topological basis formed by $t^i \wedge t^j$, where $i <  j$. The homogeneous coordinate ring of the corresponding infinite-dimensional projective space $\mathbb{P}(\bigwedge^2k[[t,t^{-1}]]^{\vee})$ is the graded ring $k[\Delta^{ij} \mid i <  j]$ with $\deg \Delta^{ij} = 1$. We think of $\Delta^{ij}$ as the coefficient of the element $t^i\wedge t^j$ of the above topological basis.

\begin{cor}\label{a:infiniteplucker}
Let $V$ be the pro-finite-dimensional vector space $k[[t,t^{-1}]]$. The Pl\"ucker embedding realizes $\mathrm{Gr}(2,V)$ as the subvariety of $\mathbb{P}(\bigwedge^2k[[t,t^{-1}]]^{\vee})$ given by the homogeneous polynomials $$ \Delta^{ij} \Delta^{kl} - \Delta^{ik}\Delta^{jl} + \Delta^{il}\Delta^{jk} = 0,$$ where $i  < k < j < l$.
\end{cor}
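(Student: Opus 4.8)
\emph{Plan.} The corollary is simply the case $n=2$, $V=k[[t,t^{-1}]]$ of the preceding lemma, so the plan is to take that lemma's defining equations $i(z)x\wedge x=0$---now with $z$ ranging over $\bigwedge^1 V=V$---and rewrite them in the homogeneous coordinates $\Delta^{ij}$ as the short Pl\"ucker relations. First I would reduce the family of equations to something manageable: by linearity and continuity of the contraction it suffices to impose $i(z)x\wedge x=0$ for $z$ running over the topological basis $\{t^m\mid m\in\mathbb{Z}\}$ of $V$, and, as will be seen, each resulting equation involves only finitely many of the $\Delta^{ij}$ and is homogeneous of degree two. The realization $V^\vee=\bigcup_i V_i^\vee$ used in the proof of the lemma ensures that all the relevant exterior powers and contractions are computed inside finite-dimensional pieces, so no convergence issue arises.

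The conceptual heart is the classical fact that a $2$-vector is decomposable if and only if it squares to zero. Writing the tautological bivector as $x=\sum_{i<j}\Delta^{ij}\,t^i\wedge t^j$, I would observe that, since contraction is an antiderivation and $x$ has even degree, $i(z)(x\wedge x)=2\,(i(z)x)\wedge x$ for every $z$. As a nonzero element of $\bigwedge^4 V$ is detected by some contraction, the whole family $\{(i(z)x)\wedge x=0\}_z$ is equivalent to the single quadratic condition
\[
x\wedge x=0 .
\]
This replaces the contraction equations of the lemma by one symmetric condition, from which the three-term shape of the relations will emerge.

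Finally I would expand
\[
x\wedge x=\sum_{i<j}\sum_{k<l}\Delta^{ij}\Delta^{kl}\,t^i\wedge t^j\wedge t^k\wedge t^l
\]
and collect, for each quadruple of distinct integers, the coefficient of the corresponding basis $4$-vector. For sorted indices $a<b<c<d$ the three partitions into ordered pairs contribute with signs $+,-,+$, each unordered partition occurring twice, so the coefficient is $2\bigl(\Delta^{ab}\Delta^{cd}-\Delta^{ac}\Delta^{bd}+\Delta^{ad}\Delta^{bc}\bigr)$. Equating these to zero gives exactly one three-term relation per quadruple; matching the paper's labelling $i<k<j<l$ (equivalently, using the antisymmetric extension $\Delta^{ij}=-\Delta^{ji}$) puts it in the displayed form $\Delta^{ij}\Delta^{kl}-\Delta^{ik}\Delta^{jl}+\Delta^{il}\Delta^{jk}=0$. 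Since the preceding lemma identifies the Pl\"ucker image $P$ with the subscheme cut out by the contraction equations, this identifies $\mathrm{Gr}(2,V)$ with the closed subvariety of $\mathbb{P}(\bigwedge^2 k[[t,t^{-1}]]^\vee)$ defined by these relations.

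I do not expect a genuine obstacle here, only careful bookkeeping in the infinite-dimensional setting: one must check that testing on the topological basis suffices and that each relation is supported on finitely many indices. The point that makes this painless is that every Pl\"ucker relation involves only four indices, hence lives inside a finite-dimensional exterior power, so the finite-dimensional computation of \cite{GriffithsHarris} applies verbatim and the passage to infinite dimension contributes nothing beyond indexing.
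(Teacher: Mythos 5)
Your proposal is correct and takes essentially the same route as the paper's own proof: both use the antiderivation identity $i(z)(x\wedge x)=2\,(i(z)x)\wedge x$ together with $n=2$ to reduce the contraction equations of the preceding lemma to the single condition $x\wedge x=0$, and then expand $x$ in the topological basis (with the antisymmetric convention $\Delta^{ij}=-\Delta^{ji}$) to extract the three-term relations. The additional bookkeeping you supply---testing only on basis vectors and noting that each relation involves just four indices, hence lives in a finite-dimensional piece---is exactly the content of the paper's appeal to the finite-dimensional computation of Griffiths--Harris.
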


\begin{proof}
As above this is shown as in Chapter 1.5 of \cite{GriffithsHarris}. Since we have $i(z)x \wedge x = 0 = \frac{1}{2}i(z)(x \wedge x)$, and $n=2$, we conclude that the condition is equivalent to $x \wedge x = 0$. Representing $x$ as $\frac{1}{2}\sum_{i,j}\Delta^{ij}t^i\wedge t^j,$ where $\Delta^{ij} = - \Delta^{ji}$, we obtain the equations for the coefficients $$ \Delta^{ij} \Delta^{kl} = \Delta^{ik}\Delta^{jl} + \Delta^{il}\Delta^{jk},$$ where we can assume without loss of generality that $i  < k < j < l$.
\end{proof}

\small

\bibliographystyle{amsplain}
\bibliography{biblio}\label{references}

\normalsize

\end{document}